\theoremstyle{plain} 
\newtheorem{theorem}{Theorem}[section] 
\newtheorem{proposition}[theorem]{Proposition}
\newtheorem{lemma}[theorem]{Lemma} 
\newtheorem{corollary}[theorem]{Corollary}
\newtheorem{heuristic}[theorem]{Heuristic}
\newtheorem{definition}[theorem]{Definition}
\newtheorem{remark}[theorem]{Remark}
\newtheorem{example}[theorem]{Example}
\newtheorem{problem}[theorem]{Problem}
\DeclareMathOperator{\End}{\operatorname{End}}
\DeclareMathOperator{\Cl}{\operatorname{Cl}}
\let\SS\relax
\DeclareMathOperator{\SS}{\operatorname{SS}}
\DeclareMathOperator{\disc}{disc}
\DeclareMathOperator{\nrd}{nrd}
\DeclareMathOperator{\Tr}{Tr}
\DeclareMathOperator{\Leaves}{Leaves}
\DeclareMathOperator{\Diag}{Diag}
\DeclareMathOperator{\Aut}{Aut}
\DeclareMathOperator{\polylog}{\operatorname{polylog}}
\newcommand{\IsOrientable}[1]{\textsf{\upshape IsOrientable}\ensuremath{_{#1}}}
\newcommand{\FindSmoothGen}[1]{\textsf{\upshape Find\-Smooth\-Gen}\ensuremath{_{#1}}}
\newcommand{\SmoothFact}[1]{\textsf{\upshape Smooth\-Fact}\ensuremath{_{#1}}}
\newcommand{\EfficientRep}[1]{\textsf{\upshape Efficient\-Rep}\ensuremath{_{#1}}}
\newcommand{\TreeFill}[1]{\textsf{\upshape Tree\-Fill}\ensuremath{_{#1}}}
\newcommand{\Z}{\mathbb{Z}}
\newcommand{\Q}{\mathbb{Q}}
\newcommand{\C}{\mathbb{C}}
\newcommand{\F}{\mathbb{F}}
\newcommand{\Fpbar}{\overline{\mathbb{F}_p}}
\title{Finding Orientations of Supersingular Elliptic Curves and Quaternion Orders}
\author[S. Arpin, J. Clements, P. Dartois, J. K. Eriksen, P. Kutas, B. Wesolowski]{Sarah Arpin, James Clements, Pierrick Dartois, Jonathan Komada Eriksen, P\'eter Kutas, Benjamin Wesolowski}
\address{Mathematics Institute,
Universiteit Leiden,
Leiden, The Netherlands}
\email{s.a.arpin@math.leidenuniv.nl}
\address{School of Computer Science, University of Bristol, Bristol, United Kingdom}
\email{james.clements@bristol.ac.uk}
\address{Centre Inria de l'Universit\'{e} de Bordeaux, Institut de Math\'{e}matiques de Bordeaux, UMR 5251, Bordeaux, France}
\email{pierrick dot dartois at u-bordeaux dot fr}
\address{Department of Information Security and Communication Technology, Norwegian University of Science and Technology, Trondheim, Norway}
\email{jonathan.k.eriksen@ntnu.no}
\address{Faculty of Informatics, Eötvös Loránd University, Hungary and School of Computer Science, University of Birmingham, UK}
\email{p.kutas@bham.ac.uk}
\address{ENS de Lyon, CNRS, UMPA, UMR 5669, Lyon, France}
\email{benjamin.wesolowski@math.u-bordeaux.fr}
\thanks{This research was funded in part by the UK Engineering and Physical Sciences Research Council (EPSRC) (grant number EP/V011324/1.), by the Agence Nationale de la Recherche under grant ANR MELODIA (ANR-20-CE40-0013), and the France 2030 program under grant agreement No. ANR-22-PETQ-0008 PQ-TLS. This research is also supported by the Hungarian Ministry of Innovation and Technology NRDI Office within the framework of the Quantum Information National Laboratory Program, the János Bolyai Research Scholarship of the Hungarian Academy of Sciences.}
\date{\today}
\begin{document}

\maketitle

\begin{abstract}
    Orientations of supersingular elliptic curves encode the information of an endomorphism of the curve. Computing the full endomorphism ring is a known hard problem, so one might consider how hard it is to find one such orientation. We prove that access to an oracle which tells if an elliptic curve is $\mathfrak{O}$-orientable for a fixed imaginary quadratic order $\mathfrak{O}$ provides non-trivial information towards computing an endomorphism corresponding to the $\mathfrak{O}$-orientation. We provide explicit algorithms and in-depth complexity analysis.

    We also consider the question in terms of quaternion algebras. We provide algorithms which compute an embedding of a fixed imaginary quadratic order into a maximal order of the quaternion algebra ramified at $p$ and $\infty$. We provide code implementations in Sagemath \cite{sage} which is efficient for finding embeddings of imaginary quadratic orders of discriminants up to $O(p)$, even for cryptographically sized $p$. 
\end{abstract}

\tableofcontents
\section{Introduction}

Isogeny-based cryptography is a relatively new branch of post-quantum cryptography which is based on hard problems supposedly intractable even for quantum computers. The underlying hard problems were first introduced publicly in 2006 by the hash-function proposal of Charles-Goren-Lauter \cite{CGL}, and the works of Couveignes \cite{couveignes2006hard} and Rostovtsev-Stolbunov \cite{RostStol}. Since then, this field has blossomed with the introductions of new schemes such as SIDH \cite{SIDH} (now broken by \cite{CastryckDecruSIDH,Maino_et_al_SIDH,robert2023breaking}), CSIDH \cite{castryck2018csidh}, and SQISign \cite{de2020sqisign}. The hardness of all isogeny-based schemes is based on some variant of the path finding problem, which asks to find an isogeny between two given supersingular elliptic curves. The quaternion analogue of this hard problem has been efficiently solved \cite{KLPT}, but the problem remains hard for supersingular elliptic curves. Path finding in the supersingular isogeny graph is equivalent to endomorphism ring computation, which was first heuristically proven in \cite{eisentrager2018supersingular} and then rigorously (assuming GRH) proven in \cite{Wesolowski_EquivProbs}.
The key recovery of CSIDH was reduced to endomorphism ring computations in \cite{Bweso_orientations}. 

To study the hardness of the path finding problem it is natural to add some data to the elliptic curves and study how this data interacts with the graph structure. One way to do this is to add the information of an orientation to the elliptic curve vertices. Informally, an orientation on an elliptic curve $E$ is an embedding of an imaginary quadratic order $\mathfrak{O}$ into the endomorphism ring of $E$ which cannot be extended to a superorder of $\mathfrak{O}$. The resulting isogeny graph admits an abelian group action, which is used in cryptographic protocols such as CSIDH \cite{castryck2018csidh}, Scallop \cite{SCALLOP}, OSIDH \cite{OSIDH}, and SETA \cite{SETA}. The group action is crucial for defining the Uber isogeny problem, whose hardness underlies all isogeny-based schemes. 
One might suspect that being given the information of an orientation could weaken the difficulty of the path finding problem, but this depends heavily on the given orientation and does not typically weaken the hardness of the path finding problem \cite{Bweso_orientations,arpin2022orienteering}. A natural question to consider would be how to find an orientation on a curve, given that one exists. This is the $\mathfrak{O}$-Orienting Problem. It is also natural to consider the decisional version of this problem: given a supersingular elliptic curve $E$ and a quadratic order $\mathfrak O$, can one decide whether $E$ is orientable by $\mathfrak O$?  Solving either of these problems would completely break OSIDH \cite{OSIDH,OSIDH_Onuki,OSIDH_security}. Interestingly, they are not even efficiently solved on the quaternion side. In this work, we give reductions between the search and decision variants of these problems, and provide algorithms for the quaternion variant of these problems. 
\medskip

\newpage
\subsection{Our contributions}

\subsubsection{Reduction from Search to Decision $\mathfrak{O}$-Orienting Problem}

When the discriminant of $\mathfrak{O}$ is smaller than the characteristic $p$ of the base field, we prove a subexponential reduction from the computational to the decisional version of the $\mathfrak{O}$-Orienting Problem. In particular, we provide an explicit algorithm (Algorithm~\ref{algo: reduction search to decision}) to find an $\mathfrak{O}$-orientation of an orientable elliptic curve in subexponential time and space when given access to an oracle deciding whether any elliptic curve is $\mathfrak{O}$-orientable. In Section~\ref{sec: complexity analysis} we provide an in-depth analysis and proof of the complexity of the algorithm (Theorem~\ref{thm: complexity}). This proves that such an oracle gives non-trivial information since finding an orientation automatically yields a non-scalar endomorphism and the best known algorithms to find a non-scalar endomorphism on a supersingular elliptic curve are exponential \cite{DelfsGalbraith2016,Eisentrager_al_2018}.

Before treating the general case, we prove a polynomial reduction when $\mathfrak{O}$ is the maximal order of $\mathbb{Q}(\sqrt{-d})$ and $d$ is the product of small distinct primes in Section~\ref{sec: smooth disc}. This allows us to illustrate the spirit of the more general algorithm in a less complicated setting. We provide an explicit algorithm for this case (Algorithm~\ref{algo: special reduction search to decision}) and prove in Theorem~\ref{thm: special_red} that this algorithm runs in polynomial time.

\subsubsection{Quaternion Order Embedding Problem}

In Section~\ref{sec: quaternion}, we consider the Quaternion Order Embedding Problem (Problem~\ref{prob: quat}) which is the quaternion analogue of the $\mathfrak{O}$-Orienting Problem. That is, given a maximal quaternion order $\mathcal{O} \subset B_{p, \infty}$ and a quadratic order $\mathfrak{O}$ which embeds into $\mathcal{O}$, find an embedding $\iota: \mathfrak{O} \hookrightarrow \mathcal{O}$ that cannot be extended to a superorder of $\mathfrak{O}$. 
In Section~\ref{ssec: quat_findingembeddings} we present a general algorithm to solve the problem of finding embeddings using a factorization oracle. We provide a complexity analysis based on several heuristics in Section~\ref{ssec: quat_complexity}. In Section~\ref{primitive_orientations} we show that finding embeddings which cannot be extended (i.e., orientations), only adds a small factor to the running time. We prove efficiency for the curve with $j$-invariant $1728$, and describe a practical method of removing the dependence on the factorization oracle. When the discriminant $\disc(\mathfrak{O})$ is small, our algorithm improves the state of the art being efficient up to $\disc(\mathfrak{O}) = O(p)$. We provide an implementation in Sagemath \cite{sage} which, for small discriminant orders, is fast for cryptographically sized $p$.

Code is available at: \texttt{https://github.com/jtcc2/finding-orientations}

\subsection*{Acknowledgements} This project began at KU Leuven Isogeny Days in 2022, and the authors extend their gratitude to the organizers.

\section{Preliminaries}

We provide a concise summary of the necessary background and the state of the art algorithms which we use in this paper. 

\subsection{Supersingular elliptic curves and quaternion algebras}
Let $p$ be a prime. An elliptic curve over $\Fpbar$ is called \textit{supersingular} if any one of the following equivalent conditions holds :
\begin{enumerate}
    \item $\End(E)$ is isomorphic to a maximal order in a quaternion algebra
    \item $E[p^r] = 0_E$ for all $r\geq 1$
    \item $j(E)\in\mathbb{F}_{p^2}$ and the multiplication-by-$p$ map $[p]$ is purely inseparable
    \item The dual to the $p^r$-power Frobenius is purely inseparable for all $r\geq 1$. 
\end{enumerate}
See \cite{Silverman} for additional properties and proofs of equivalence.

We use the endomorphism ring heavily in what follows, so we describe here the necessary definitions and properties of quaternion objects. For more generality and more detail, we encourage the reader to see \cite{Voight}.

A (definite) quaternion algebra $\mathcal{A}$ is a noncommutative algebra which has rank~4 over $\mathbb{Q}$, and can be specified by generators $i,j$ such that:
\[\mathcal{A} = \Q + \Q i + \Q j + \Q k: \quad i^2,j^2\in\mathbb{Q},\quad i^2,i^2<0,\quad k:= ij = -ji.\]
An order $\mathcal{O}$ in $\mathcal{A}$ is a $\Z$-submodule of $\mathcal{A}$ of rank~$4$ which is also a subring. An order is said to be maximal if it is not properly contained in any other order. 
For any lattice $I$ in $\mathcal{A}$, we define its left order 
\[O_L(I):=\{\alpha\in\mathcal{A}: \alpha I\subseteq I\}.\]
The right order $O_R(I)$ is defined analogously.
A lattice $I$ of $\mathcal{A}$ is said to be invertible if there exists a lattice $I'$ such that $II' = O_L(I) = O_R(I')$ and $I'I = O_R(I) = O_L(I')$. 
A lattice in $\mathcal{A}$ is said to be a left (resp. right) $\mathcal{O}$-ideal if $\mathcal{O}\subseteq O_L(I)$ (resp. $\mathcal{O}\subseteq O_R(I)$). For every order $\mathcal{O}$ of $\mathcal{A}$ we can define a left class set of equivalence classes of invertible ideals: 
Two invertible left-$\mathcal{O}$ ideals are equivalent in the left class set of $\mathcal{O}$ if they differ by a unit of $\mathcal{A}$. The left class set of invertible ideals is finite. The right class set of invertible ideals is analogously defined and is also finite.

For a fixed prime $p$, we define the (unique up to isomorphism) quaternion algebra $B_{p,\infty}$ to be the definite quaternion algebra ramified precisely at $p$ and $\infty$. 
The endomorphism rings of supersingular elliptic curves over $\Fpbar$ are isomorphic to maximal orders in $B_{p,\infty}$:
\begin{theorem}[Deuring \cite{Deu41}]
Fix a maximal order $M$ of the quaternion algebra $B_{p,\infty}$ ramified precisely at $p$ and $\infty$. 
There is a bijection between isomorphism classes of supersingular elliptic curves over $\Fpbar$ and the left class set of the order $M$. 
\end{theorem}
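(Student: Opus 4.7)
The plan is to realize the bijection concretely through kernel ideals. Fix a supersingular curve $E_0/\overline{\mathbb{F}_p}$ whose endomorphism ring is isomorphic to $M$ (which exists after conjugating $M$ inside $B_{p,\infty}$, and any two such maximal orders are connected by an isomorphism). I would construct the map in one direction by sending an isomorphism class $[E]$ to the class of $\mathrm{Hom}(E,E_0)$, viewed as a left $\mathrm{End}(E_0)$-module via post-composition, and verify this lands in the left class set of $M$.

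First I would show the following foundational facts and then glue them together. (a) Connectedness: any supersingular $E$ admits an isogeny $\phi: E_0 \to E$; this follows from the fact that the supersingular isogeny graph is connected (a classical consequence of Deuring reduction or of the fact that a single $\ell$-isogeny graph is a Ramanujan expander). (b) Given $\phi:E_0\to E$, the set $I_\phi:=\{\alpha\in\mathrm{End}(E_0):\phi\circ\alpha=0 \text{ on some dual factor}\}$, or equivalently the kernel ideal $I(\ker\phi)=\{\alpha\in\mathrm{End}(E_0): \alpha(\ker\phi)=0\}$, is a nonzero left ideal of $\mathrm{End}(E_0)$, and it is locally principal, hence invertible. (c) Conversely, given an invertible left $\mathrm{End}(E_0)$-ideal $I$, the subgroup $E_0[I]:=\bigcap_{\alpha\in I}\ker\alpha$ is finite and defines a separable isogeny $\phi_I:E_0\to E_0/E_0[I]$, whose codomain depends only on $I$ up to isomorphism.

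Next I would check the two directions are mutually inverse up to the appropriate equivalences. On one side, starting from an ideal $I$, forming $\phi_I$ and then taking $I(\ker\phi_I)$ recovers $I$; this comes from the identification $I = \{\alpha\in\mathrm{End}(E_0):\alpha \text{ factors through } \phi_I\}$, which in turn uses the duality between isogenies and kernel subgroups and the local-principal structure of $I$. On the other side, starting from $\phi:E_0\to E$, the curve $E_0/E_0[I(\ker\phi)]$ is canonically isomorphic to $E$. Finally I would verify the class structures match: two isogenies $\phi,\phi':E_0\to E, E'$ with $E\cong E'$ differ by post-composition with an isomorphism, so the corresponding left ideals differ by right multiplication by a unit times an isogeny in the opposite direction, which is exactly the equivalence relation defining the left class set of $M\cong\mathrm{End}(E_0)$.

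The main obstacle is the bookkeeping of equivalences: one must carefully show that ``differ by an isomorphism of target curves'' on the geometric side corresponds precisely to ``differ by an element of $B_{p,\infty}^\times$ acting on the right'' on the ideal side, and that every invertible left $M$-ideal class truly arises from some isogeny (no ``missing'' classes). The first point reduces to the fact that $\mathrm{Hom}(E,E')\otimes\mathbb{Q}\cong B_{p,\infty}$ for any two supersingular curves, so isomorphisms of targets correspond to scaling by units; the second is handled by showing $\phi_I$ exists for every invertible $I$, using local freeness of $I$ at each prime $\ell$ and patching to get a global kernel subgroup. Once these are in place, the map $[E]\mapsto[\mathrm{Hom}(E,E_0)]$ is the announced bijection.
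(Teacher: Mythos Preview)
The paper does not prove this theorem at all: it is stated in the preliminaries section as a classical result attributed to Deuring \cite{Deu41}, with no proof or even proof sketch provided. There is therefore nothing in the paper to compare your proposal against.

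That said, your outline is the standard kernel-ideal approach to the Deuring correspondence and is essentially correct in spirit. A few points where the sketch is loose: the definition of $I_\phi$ you wrote is garbled (``$\phi\circ\alpha=0$ on some dual factor'' is not meaningful); the clean statement is that $\mathrm{Hom}(E,E_0)$ is a left $\mathrm{End}(E_0)$-module under post-composition, and for a chosen isogeny $\phi:E_0\to E$ the map $\psi\mapsto\psi\circ\phi$ identifies it with a left ideal of $\mathrm{End}(E_0)$. You also need to handle inseparable isogenies and ideals whose norm is divisible by $p$ (the kernel construction $E_0[I]$ as written only captures the separable part), and the claim that isomorphic targets correspond to right-equivalent ideals requires showing $\mathrm{Hom}(E,E_0)$ and $\mathrm{Hom}(E',E_0)$ differ by right multiplication by an element of $B_{p,\infty}^\times$, not just a unit. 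These are all standard refinements found in, e.g., Voight's book or Waterhouse's thesis, but they are genuine details your sketch glosses over.
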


Given a supersingular elliptic curve $E/\Fpbar$, one might ask to compute $\End(E)$ in different forms: to compute endomorphisms of $E$ which generate $\End(E)$, or to compute the isomorphism class of $\End(E)$ abstractly in the quaternion algebra $B_{p,\infty}$. This problem is computationally difficult in all formulations. The information of one endomorphism $\omega$ of $E$ reveals an imaginary quadratic order $\mathbb{Z}[\omega]$ embedded within $\End(E)$. In Section~\ref{sec: orientations}, we provide more background information on such embeddings.

\subsection{Orientations}\label{sec: orientations}

\begin{definition}[Orientation]\label{def: orientation}
Let $\mathfrak{O}$ be an imaginary quadratic order. An $\mathfrak{O}$-orientation of a supersingular elliptic curve $E/\Fpbar$ is an embedding $\iota:\mathfrak{O}\hookrightarrow \End(E)$ which cannot be extended to a larger order containing $\mathfrak{O}$. The pair $(E,\iota)$ is called an $\mathfrak{O}$-oriented supersingular elliptic curve.
\end{definition}

Definition~\ref{def: orientation} corresponds to the definition of \textit{primitive} $\mathfrak{O}$-orientation found elsewhere in the literature \cite{OSIDH,OSIDH_Onuki,arpin2022orienteering}. We omit the word ``primitive" in our definition, as almost all of our $\mathfrak{O}$-orientations are primitive. When we want to discuss an embedding $\mathfrak{O}\hookrightarrow\End(E)$ which can be extended to a superorder of $\mathfrak{O}$, we highlight this by using the term ``imprimitive".

The notion of an orientation as in Definition~\ref{def: orientation} was recently introduced to isogeny-based cryptography by Col\`o and Kohel \cite{OSIDH} and was subsequently studied \cite{OSIDH_Onuki,Bweso_orientations,SCALLOP,arpin2022orienteering,ACLSST2022_orientations}. The quaternion counterpart of this notion has a longer history, dating back to Chevalley, Hasse, and Noether and often referred to as the theory of \textit{optimal embeddings}.

Supersingular elliptic curves which admit an $\mathfrak{O}$-orientation are called $\mathfrak{O}$-orientable. There is an action of the class group $\Cl(\mathfrak{O})$ on the set of $\mathfrak{O}$-oriented supersingular elliptic curves induced by the following action of an invertible $\mathfrak{O}$-ideal $\mathfrak{a}$:
\[\mathfrak{a}*(E,\iota) := (E_\mathfrak{a},(\varphi_{\mathfrak{a}})_*\iota),\]
where $E_\mathfrak{a}$ is the codomain of the degree-$N(\mathfrak{a})$ isogeny $\varphi_\mathfrak{a}:E\longrightarrow E_\mathfrak{a}$ with kernel $\cap_{\alpha\in\mathfrak{a}}\ker\alpha$. The orientation $(\varphi_{\mathfrak{a}})_*\iota:\mathfrak{O}\hookrightarrow\End(E_\mathfrak{a})$ is given via $(\varphi_{\mathfrak{a}})_*\iota(-) := \frac{1}{N(\mathfrak{a})}\varphi_\mathfrak{a}\circ\iota(-)\circ\widehat{\varphi_{\mathfrak{a}}}$.

For an imaginary quadratic field $K$, deciding if $K$ embeds into the quaternion algebra $B_{p,\infty}$ is the simple matter of the splitting behavior of $p$ in $K$. However, for a particular imaginary quadratic order $\mathfrak{O}$ and a particular supersingular elliptic curve $E$, it is generally difficult to decide if $E$ is $\mathfrak{O}$-orientable. Naturally we are inclined to study the following problems and the relationship between them: 

\begin{problem}[Decision $\mathfrak{O}$-Orienting Problem]\label{prob:OOrientingDecision}
Given an elliptic curve $E$ and an imaginary quadratic order $\mathfrak{O}$, determine if $E$ is orientable by $\mathfrak{O}$.
\end{problem}

\begin{problem}[$\mathfrak{O}$-Orienting Problem]\label{prob:OOrienting}
Given an elliptic curve $E$ oriented by an imaginary quadratic order $\mathfrak{O}$, find the orientation.
\end{problem}

We explore the following quaternion variant of Problem~\ref{prob:OOrienting} in Section~\ref{sec: quaternion}. 

\begin{problem}[Quaternion Order Embedding Problem]\label{Problem 4}

Given a maximal quaternion order $\mathcal{O}$ and an imaginary quadratic order $\mathfrak{O}$ which embeds into $\mathcal{O}$, find the embedding.
\end{problem}

One may also consider the group action variant of the Uber-isogeny problem, originally introduced in \cite{SETA}, although we do not pursue this perspective in this work:

\begin{problem}[$\mathfrak{O}$-Uber Isogeny Problem]
Given a supersingular elliptic curve $E$ with an $\mathfrak{O}$-orientation $\iota:\mathfrak{O}\hookrightarrow \End(E)$ and an $\mathfrak{O}$-orientable supersingular elliptic curve $F$, find an ideal $\mathfrak{a}\in\Cl(\mathfrak{O})$ such that $\mathfrak{a}*E = F$.
\end{problem}



\subsection{Computing modular polynomials and $j$-invariants}\label{sec: j-invariants}

Given a prime number $\ell\ll p$ and the $j$-invariant $j(E)\in\mathbb{F}_{p^2}$ of a supersingular elliptic curve, we explain how to find all $\ell$-isogenous $j$-invariants $j(E')\in\mathbb{F}_{p^2}$ using modular polynomials $\Phi_\ell(X,Y)$. By \cite[Theorem 6.3]{Rob_mod_pol}, $\Phi_\ell(j(E),Y)\in\mathbb{F}_{p^2}[Y]$ can be computed with $\tilde{O}(\ell^2\log(p))$ operations over $\mathbb{F}_{p^2}$, where the $\tilde{O}$ means that polynomial factors in $\log(\ell)$ are omitted\footnote{The algorithm is provided over $\mathbb{F}_p$ but the techniques of \cite{Rob_mod_pol} easily extend to $\mathbb{F}_{p^2}$.}. \cite[Section 5]{Leroux_mod_pol} also provides an algorithm with similar complexity. We then find all the roots over $\mathbb{F}_{p^2}$ of the degree-$(\ell+1)$ polynomial $\Phi_\ell(j(E),Y)$ in $\tilde{O}(\ell^2\log(p))$ operations over $\mathbb{F}_{p^2}$ \cite[Theorem 14.14]{Modern_comput_alg} to find the $j$-invariants $j(E')\in\mathbb{F}_{p^2}$ that are $\ell$-isogenous to $j(E)$. On the whole, the computation has time complexity $\tilde{O}(\ell^2\log(p))$.

\subsection{Computing an $\ell$-isogeny between two $j$-invariants.}\label{sec: isogenies from j-invariants}

Given two supersingular $j$-invariants $j(E)\in\mathbb{F}_{p^2}$ and $j(E')\in\mathbb{F}_{p^2}$ we explain how to find an $\ell$-isogeny $\phi: E\longrightarrow E'$ in $\tilde{O}(\ell^2\log(p))$ operations over $\mathbb{F}_{p^2}$. 

By \cite[Theorem 2]{Isogeny_comp}, given Weierstrass equations of $E$ and $E'$, we can find (if it exists) a normalized $\ell$-isogeny $\phi: E\longrightarrow E'$ with only $\tilde{O}(\ell)$ arithmetic operations over $\mathbb{F}_{p^2}$. By \emph{normalized}, we mean that $\phi$ pulls back the invariant differential $\omega':=dx'/2y'$ of $E'$ to the invariant differential $\omega:=dx/2y$ of $E$ ($\phi^*\omega'=\omega$).

The existence of such a normalized isogeny $\phi$ only depends on the choice of Weierstrass equations for $E$ and $E'$ which determine the constant $\lambda:=\phi^*\omega'/\omega$. Knowing only $j(E)$ and $j(E')$, we have multiple choices of Weierstrass equations and we have to pick one so that $\lambda=1$. We fix an equation for $E: y^2=x^3+Ax+B$, then find an equation for $E'$ so that $\lambda = 1$. Following the method given by \cite[Section~7]{Schoof_1995} (referring to ideas introduced in \cite[Section 3]{Elkies_1997}), we take $E':y^2=x^3+A'x+B'$, with
\begin{equation}A':=-\cfrac{j'(E')^2}{48j(E')(j(E')-1728)} \quad B':=-\cfrac{j'(E')^3}{864j(E')^2(j(E')-1728)},\label{eq: A' and B'}\end{equation}
and
\begin{equation}j'(E'):=-\cfrac{j'(E)}{\ell}\cfrac{\partial\Phi_\ell}{\partial X}(j(E), j(E'))\left(\cfrac{\partial\Phi_\ell}{\partial Y}(j(E), j(E'))\right)^{-1}\label{eq: j'(E')},\end{equation}
where
\begin{equation}
    j'(E):=\left\{\begin{array}{cr}
         \cfrac{18B j(E)}{A}& \mbox{ if } A\neq 0  \\
         0 & \mbox{ if } A=j(E)=0
    \end{array}
    \right.\label{eq: j'(E)}
\end{equation}
The derivatives $\partial\Phi_\ell/\partial X$ and $\partial\Phi_\ell/\partial Y$ can be precomputed with $\tilde{O}(\ell^2\log(p))$ operations over $\mathbb{F}_{p^2}$ using the techniques in Section \ref{sec: j-invariants} (see \cite[Remark 5.3.10]{Rob_HDR}). Hence, in total, computing an isogeny $\phi: E\longrightarrow E'$ costs $\tilde{O}(\ell^2\log(p))$ operations over $\mathbb{F}_{p^2}$ when $j(E')\neq 0, 1728$ and $\partial\Phi_\ell/\partial Y(j(E), j(E'))\neq 0$. 

The cases $j(E')=0, 1728$ are very unlikely (probability $O(1/p)$ in the supersingular isogeny graph). The latter case we split into two: First, suppose $\partial\Phi_\ell/\partial Y(j(E), j(E'))=0$ and $\partial\Phi_\ell/\partial X(j(E), j(E'))\neq0$. By symmetry of $\Phi_\ell(X,Y)$, we can fix a Weierstrass equation for $E'$ and find a normalized Weierstrass equation for $E$ by Equations \ref{eq: A' and B'}, \ref{eq: j'(E')} and~\ref{eq: j'(E)} (after swapping $E$ and $E'$). The remaining case is $\partial\Phi_\ell/\partial X(j(E), j(E'))=\partial\Phi_\ell/\partial Y(j(E), j(E'))=0$, i.e. when $(j(E),j(E'))$ is a singular point of the modular curve $\Phi_\ell(X,Y)=0$ over $\F_{p^2}$. Following \cite[Section~7]{Schoof_1995}, we prove in Appendix \ref{sec: modular singular points} that this is very unlikely when $\log(\ell)\ll\log(p)$ (which will be the case in our~paper).

We can still handle singular cases at a higher cost of $\tilde{O}(\ell^{7/2})$ operations over $\F_{p^2}$ with a naive algorithm. We enumerate all the cyclic subgroups of order $\ell$ of $E[\ell]$ (there are $\ell+1$ of them) and use \cite{SqrtVelu} to compute each $\ell$-isogeny with $O(\sqrt{\ell})$ operations over the field extension $K/\F_{p^2}$ where $E[\ell]$ is defined. As will be proved in Lemma \ref{lemma: field definition torsion}, $K$ has degree $O(\ell)$ over $\F_{p^2}$ so one arithmetic operation over $K$ is equivalent to at most $O(\ell^2)$ operations over $\F_{p^2}$. Since singular cases are very unlikely when $\log(\ell)\ll\log(p)$, we may assume throughout this paper that computing $\ell$-isogenies between $j$-invariants costs $\tilde{O}(\ell^2\log(p))$ operations over $\mathbb{F}_{p^2}$ on average by Lemma \ref{lemma: expected isogeny computation time}.


\subsection{Efficiently representing an isogeny of any degree with Kani's lemma}\label{sec: efficient rep}

Let $\varphi: E\longrightarrow E'$ be an isogeny of degree $d$ between supersingular elliptic curves $E,E'/\F_{p^2}$. In general, we can represent $\varphi$ with data of size $O(d)$. We either have direct formulas to evaluate $\varphi$ (given by rational fractions) or equivalently, generators of the kernel (defined over an $\F_{p^2}$-extension of degree $O(d)$) from which we can derive these formulas by \cite{Velu}. In this case, evaluating $\varphi$ on a point takes linear time in $d$. We can do much better when $d$ is smooth by representing $\varphi$ as a product of small degree isogenies. This is an efficient representation, in the sense of the following definition. 

\begin{definition}\cite[Definition 1.1.1]{SQISignHD}\label{def: efficient rep}
  An \emph{efficient representation} of an isogeny $\varphi: E\longrightarrow E'$ defined over a finite field $\mathbb{F}_q$ is given by a couple $(D,\mathscr{A})$ where:

\begin{description}
\item[(i)] $D$ is some data of size polynomial in $\log(\deg(\varphi))$ and $\log(q)$ determining the isogeny $\varphi$ in a unique way.

\item[(ii)] $\mathscr{A}$ is a universal algorithm independent of $\varphi$ returning $\varphi(P)$ as input $D$ and $P\in E(\mathbb{F}_{q^k})$ in polynomial time in $k\log(q)$ and $\log(\deg(\varphi))$.
\end{description}
\end{definition}

We can also efficiently represent $\varphi$ when $d$ is not smooth. Provided we can evaluate $\varphi$ on some torsion points, we know that we can ``embed" $\varphi$ in a smooth degree higher dimensional isogeny $F$. Knowing $F$, we can evaluate $\varphi$ everywhere in polynomial time. This provides an efficient representation of $\varphi$. This idea was first introduced in the attacks against SIDH \cite{CastryckDecruSIDH,Maino_et_al_SIDH,RobSIDH} and then reused for several other applications \cite{Rob_mod_pol,Robert2022evaluating,SQISignHD}.

\begin{definition}[$d$-isogeny in higher dimension]
Let $\alpha : (A,\lambda_A)\longrightarrow (B,\lambda_B)$ be an isogeny between principally polarized abelian varieties (PPAV). We denote by $\widetilde{\alpha}$ the isogeny
\[B\overset{\lambda_B}{\longrightarrow}\widehat{B}\overset{\widehat{\alpha}}{\longrightarrow}\widehat{A}\overset{\lambda_A^{-1}}{\longrightarrow}A,\]
where $\widehat{\alpha}$ is the dual isogeny of $\alpha$.

We say that $\alpha$ is a \emph{$d$-isogeny} if $\widetilde{\alpha}\circ\alpha=[d]_A$, or equivalently if $\alpha\circ\widetilde{\alpha}=[d]_B$.
\end{definition}

We use the following result due to Kani \cite[Theorem 2.3]{Kani1997}. A concise expression of this result may be found in \cite[Lemma 3.6]{RobSIDH}. 

\begin{lemma}[Kani]
    Consider a commutative diagram of isogenies between PPAV:
\[
\xymatrix{
A' \ar[r]^{\varphi'} & B'\\
A\ar[u]^{\psi} \ar[r]^{\varphi} & B \ar[u]_{\psi'}
}
\]
where $\varphi$ and $\varphi'$ are $a$-isogenies and $\psi$ and $\psi'$ are $b$-isogenies. 

Then, the isogeny $F: A\times B'\longrightarrow B\times A'$ given in matrix notation by
\[F:=\left(\begin{matrix}
\varphi & \widetilde{\psi'}\\
-\psi & \widetilde{\varphi'}
\end{matrix}\right)\]
is a $d$-isogeny with $d:=a+b$, for the product polarizations. 

If $a$ and $b$ are coprime, the kernel of $F$ is
\[\ker(F)=\{(\widetilde{\varphi}(x),\psi'(x))\mid x\in B[d]\}.\]
\end{lemma}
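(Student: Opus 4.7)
The plan is to prove both claims by a direct $2\times 2$ block-matrix computation with respect to the product principal polarizations on $A\times B'$ and $B\times A'$. The first step is to write down the dual of $F$: compatibility of duality with direct products, together with the sign from $-\psi$, yields
$$\widetilde{F}=\begin{pmatrix}\widetilde{\varphi} & -\widetilde{\psi}\\ \psi' & \varphi'\end{pmatrix},$$
using $\widetilde{\widetilde{\psi'}}=\psi'$ and $\widetilde{\widetilde{\varphi'}}=\varphi'$. Justifying this formula cleanly, via the action of the product polarizations on each block, is the main technical ingredient; everything that follows is then formal manipulation.

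Next I would verify $\widetilde{F}\circ F=[d]_{A\times B'}$ entry by entry. The diagonal entries are $\widetilde{\varphi}\varphi+\widetilde{\psi}\psi=[a]_A+[b]_A=[d]_A$ and $\psi'\widetilde{\psi'}+\varphi'\widetilde{\varphi'}=[b]_{B'}+[a]_{B'}=[d]_{B'}$, using directly the hypotheses that $\varphi,\varphi'$ are $a$-isogenies and $\psi,\psi'$ are $b$-isogenies. The bottom-left entry is $\psi'\varphi-\varphi'\psi$, which vanishes by commutativity of the given square; the top-right entry $\widetilde{\varphi}\widetilde{\psi'}-\widetilde{\psi}\widetilde{\varphi'}=\widetilde{(\psi'\varphi)}-\widetilde{(\varphi'\psi)}$ vanishes by taking duals of the same identity. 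This shows $F$ is a $d$-isogeny.

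For the kernel description under $\gcd(a,b)=1$, I would consider the map $\kappa\colon B[d]\to A\times B'$, $x\mapsto(\widetilde{\varphi}(x),\psi'(x))$. Computing $F(\kappa(x))$, the first coordinate is $\varphi\widetilde{\varphi}(x)+\widetilde{\psi'}\psi'(x)=[a](x)+[b](x)=[d](x)=0$; the second coordinate $-\psi\widetilde{\varphi}(x)+\widetilde{\varphi'}\psi'(x)$ vanishes once we establish the identity $\widetilde{\varphi'}\psi'=\psi\widetilde{\varphi}$ as maps $B\to A'$. This identity is obtained by post-composing the commutativity $\psi'\varphi=\varphi'\psi$ on the left by $\widetilde{\varphi'}$ to get $\widetilde{\varphi'}\psi'\varphi=[a]_{A'}\psi=\psi\widetilde{\varphi}\varphi$, then right-cancelling the surjective $\varphi$.

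To conclude that $\mathrm{Im}(\kappa)=\ker F$, I would argue by cardinality and injectivity. Setting $g=\dim B$, one has $|B[d]|=d^{2g}$, and $F$ being a $d$-isogeny on a $2g$-dimensional variety gives $|\ker F|=\deg F=d^{2g}$ (from $\deg\widetilde{F}\cdot\deg F=\deg[d]=d^{4g}$ together with $\deg F=\deg\widetilde{F}$ under the principal polarizations). Injectivity of $\kappa$ is where coprimality enters: if $\kappa(x)=0$, then $x\in\ker\widetilde{\varphi}\cap\ker\psi'\subseteq B[a]\cap B[b]=\{0\}$, using $\varphi\widetilde{\varphi}=[a]_B$ and $\widetilde{\psi'}\psi'=[b]_B$. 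Beyond the formula for $\widetilde{F}$, the only non-trivial step is the single cancellation of a surjective isogeny used to obtain $\widetilde{\varphi'}\psi'=\psi\widetilde{\varphi}$.
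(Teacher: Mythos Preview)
Your proof is correct. Note, however, that the paper does not actually prove this lemma: it is stated with attribution to Kani and to the concise formulation in Robert's SIDH paper, and used as a black box. Your direct block-matrix verification---computing $\widetilde{F}$ entrywise for the product polarizations, checking $\widetilde{F}F=[d]$ off the commutativity $\psi'\varphi=\varphi'\psi$ and the $a$- and $b$-isogeny hypotheses, and then matching $\mathrm{Im}(\kappa)$ with $\ker F$ by a cardinality-plus-injectivity argument using $\gcd(a,b)=1$---is precisely the standard argument one finds in those references, so there is nothing to contrast.
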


Let $N>d$ be a powersmooth integer coprime with $d$. We can always write $N=d+a_1^2+a_2^2+a_3^2+a_4^2$ for some $a_1, a_2, a_3, a_4\in\mathbb{Z}$, by Legendre's four square theorem. Let $\alpha\in\End(E^4)$ be the isogeny written in matrix form as follows:
\begin{equation}\alpha:=\left(\begin{array}{cccc}
 a_1 & -a_2 & -a_3 & -a_4  \\
 a_2 & a_1 & a_4 & -a_4 \\
 a_3 & -a_4 & a_1 & a_2 \\
 a_4 & a_3 & -a_2 & a_1
\end{array}\right),\label{eq: matrix alpha}\end{equation}
and $\alpha'$ be its analogue in $\End(E')$. Let $\Phi:=\Diag(\varphi,\varphi,\varphi,\varphi): E^4\longrightarrow E'^4$. Then, $\Phi$ is a $d$-isogeny, $\alpha$ and $\alpha'$ are $(N-d)$-isogenies and we have a commutative diagram:
\[\xymatrix{
E^4 \ar[r]^{\Phi} & E'^4 \\
E^4 \ar[u]^{\alpha} \ar[r]^{\Phi} & E'^4 \ar[u]^{\alpha'}
}
\]
that yields an $8$-dimensional $N$-isogeny:
\[F:=\left(\begin{array}{cc}
   \alpha  & \widetilde{\Phi} \\
    -\Phi & \widetilde{\alpha}
\end{array}\right)\in\End(E^4\times E'^4),\]
with kernel:
\begin{equation}\ker(F):=\{(\widetilde{\alpha}(P),\Phi(P))\mid P\in E^4[N]\},\label{eq: ker(F)}\end{equation}
since $N$ and $d$ are coprime. By the above formula, we can compute $\ker(F)$ if we can evaluate $\varphi$ on generators of $E[N]$. We can then compute $F$ as a product of small degree isogenies (as in dimension $1$) and evaluate $\varphi$ efficiently everywhere as a component of $F$. 

\begin{lemma}\label{lemma: decompose F}
    Let the setup be that of the previous paragraph, and suppose $N=\prod_{i=1}^s q_i^{e_i}$, where $q_1, \cdots, q_s$ are distinct primes. Then, we can decompose $F$ as:
    \[\mathcal{A}_0\overset{F_1}{\longrightarrow} \mathcal{A}_1 \overset{F_2}{\longrightarrow} \cdots\mathcal{A}_{s-1}\overset{F_s}{\longrightarrow}\mathcal{A}_s,\]
    with $\mathcal{A}_0=\mathcal{A}_s:=E^4\times E'^4$ and where $F_i$ is a $q_i^{e_i}$-isogeny for all $i\in\{1,\cdots, s\}$. 
    
    Let $K:=\ker(F)$. Moreover, 
    \[\ker(F_1)=K[q_1^{e_1}] = \{P\in K: [q_1^{e_1}] P = 0\} \quad \mbox{and}\] 
    \[\quad \ker(F_i)=F_{i-1}\circ\cdots\circ F_1(K[q_i^{e_i}]),\, \text{for } 2\leq i\leq s.\]
\end{lemma}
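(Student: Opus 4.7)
The plan is to split $K := \ker(F)$ into its prime-power components via the Chinese Remainder Theorem and then realize $F$ as a sequence of quotients by these components, one prime at a time.

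First, since $F$ is an $N$-isogeny between $8$-dimensional PPAVs, the kernel $K$ is a maximal isotropic subgroup of $\mathcal{A}_0[N]$ of order $N^8$ with respect to the Weil pairing $e_N$. The decomposition $\mathcal{A}_0[N] = \bigoplus_{i=1}^s \mathcal{A}_0[q_i^{e_i}]$ into coprime primary parts is orthogonal for $e_N$, and $e_N$ restricts to $e_{q_i^{e_i}}$ on each summand. Intersecting $K$ with each summand yields
\[ K = \bigoplus_{i=1}^s K[q_i^{e_i}], \qquad |K[q_i^{e_i}]| = q_i^{8 e_i}, \]
and each $K[q_i^{e_i}]$ is therefore maximal isotropic in $\mathcal{A}_0[q_i^{e_i}]$.

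Next, I build the chain by induction on $i$. Put $\mathcal{A}_0 := E^4 \times E'^4$ and let $F_1 : \mathcal{A}_0 \to \mathcal{A}_1$ be the quotient by $K[q_1^{e_1}]$. Because this kernel is maximal isotropic with exponent dividing $q_1^{e_1}$, the codomain $\mathcal{A}_1$ inherits a principal polarization and $F_1$ is a $q_1^{e_1}$-isogeny. Having constructed $F_1, \dots, F_{i-1}$ with the stated properties, define $K_i := F_{i-1} \circ \cdots \circ F_1(K[q_i^{e_i}])$ and let $F_i : \mathcal{A}_{i-1} \to \mathcal{A}_i$ be the quotient by $K_i$. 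The preceding kernels are supported on primes $q_j \neq q_i$, so $F_{i-1}\circ\cdots\circ F_1$ restricts to an injection on $K[q_i^{e_i}]$; hence $|K_i| = q_i^{8 e_i}$ and $K_i \subseteq \mathcal{A}_{i-1}[q_i^{e_i}]$ is maximal isotropic, which makes $F_i$ a $q_i^{e_i}$-isogeny with PPAV codomain.

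Finally, $F_s \circ \cdots \circ F_1$ has kernel $K = \ker(F)$ by construction, so it differs from $F$ by a unique isomorphism from $\mathcal{A}_s$ to $E^4 \times E'^4$; we use this isomorphism to identify the two and arrange $F = F_s \circ \cdots \circ F_1$ on the nose, whence the kernel formulas in the statement. The only delicate point, which the orthogonality of the Weil-pairing decomposition above is designed to handle, is ensuring that each intermediate variety $\mathcal{A}_i$ remains principally polarized so that the notion of a $q_i^{e_i}$-isogeny is meaningful at every step of the chain.
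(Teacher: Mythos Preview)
Your argument is correct but follows a different route from the paper. The paper cites \cite[Proposition~5.4.1]{SQISignHD} for the existence of the decomposition $F=F_s\circ\cdots\circ F_1$ into $q_i^{e_i}$-isogenies, and then, with the $F_i$ already in hand, verifies the kernel formulas by a brief coprimality argument: $F_i(K_i)$ lies in both $\mathcal{A}_i[q_i^{e_i}]$ and $\ker(F_s\circ\cdots\circ F_{i+1})\subseteq\mathcal{A}_i\bigl[\prod_{j>i}q_j^{e_j}\bigr]$, hence vanishes, and a cardinality count gives $K_i=\ker(F_i)$. You go the other way, \emph{defining} the $F_i$ as successive quotients by the pushed-forward primary pieces of $K$ and invoking descent of the principal polarization along maximal isotropic kernels to keep every $\mathcal{A}_i$ a PPAV; this is more self-contained (no external citation needed for the decomposition itself) but leans on more polarization machinery. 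One step you pass over a bit quickly is why the pushforward $K_i$ remains \emph{isotropic} in $\mathcal{A}_{i-1}[q_i^{e_i}]$ (you only argue the cardinality): this holds because a $d$-isogeny $\phi$ between PPAVs satisfies $e_m(\phi(P),\phi(Q))=e_m(P,Q)^d$ on $m$-torsion, and here $d=\prod_{j<i}q_j^{e_j}$ is a unit modulo $q_i^{e_i}$, so isotropic subgroups push forward to isotropic subgroups.
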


\begin{proof}
    The decomposition $F=F_s\circ\cdots\circ F_1$ was proven in \cite[Proposition 5.4.1]{SQISignHD}. 
    
    Now, let $K_1:=K[q_1^{e_1}]$ and $K_i:=F_{i-1}\circ\cdots\circ F_1(K[q_i^{e_i}])$ for all $2\leq i\leq s$. Then, $F_s\circ\cdots\circ F_i(K_i)=F(K[q_i^{e_i}])=\{0\}$, so that 
    \[F_i(K_i)\subseteq\ker(F_s\circ\cdots\circ F_{i+1})\subseteq\mathcal{A}_i\left[\prod_{j\geq i+1}q_j^{e_j}\right].\]
    But $F_i(K_i)\subset\mathcal{A}_i[q_i^{e_i}]$ and the primes $q_j\neq q_i$ for $j> i$, so we must have $F_i(K_i)=\{0\}$ and $K_i\subseteq\ker(F_i)$. Now, $\# K_i=\# K[q_i^{e_i}]=q_i^{8e_i}=\deg(F_i)$ since $F_{i-1}\circ\cdots\circ F_1$ has degree coprime with $q_i$. It follows that $K_i=\ker(F_i)$.
\end{proof}

Each of the $F_i$ in Lemma \ref{lemma: decompose F} can be decomposed into a chain of $q_i$-isogenies. As suggested in \cite[§ 5.5]{SQISignHD}, those isogeny chains can be computed in quasi-linear time using the optimal strategy introduced for SIDH \cite[§ 4.2.2]{SIDH}. Each $q_i$-isogeny of the chain can be computed in time $O(q_i^8)$ with the theta model \cite{DRfastisogenies}. We summarize this computation in Algorithm \ref{algo: efficient representation}.

\begin{algorithm}
\SetAlgoLined
\KwData{An isogeny $\varphi: E\longrightarrow E'$ between supersingular elliptic curves and a smoothness bound $D>0$.}
\KwResult{An $8$-dimensional isogeny $F$ of $D$-powersmooth degree representing $F$.}

Let $d:=\deg(\varphi)$\;

Select prime powers $q_1^{e_1}, \cdots, q_s^{e_s}\leq D$ coprime with $d$ such that $N:=\prod_{i=1}^s q_i^{e_i}>d$\;\label{line: coprime with degree}

Find $a_1, a_2, a_3, a_4\in\mathbb{Z}$ such that $a_1^2+a_2^2+a_3^2+a_4^2=N-d$ using Pollack and Trevi\~{n}o's algorithm \cite{Pollack2018}\;\label{line: find ais}

Let $\alpha\in\End(E)$ and $\alpha'\in\End(E')$ as in equation \ref{eq: matrix alpha} and $\Phi:=\Diag(\varphi,\varphi,\varphi,\varphi)$\; 

\For{i=1 \KwTo s}{\label{line: for start1}
Generate a basis $(P_{i,1},P_{i,2})$ of $E[q_i^{e_i}]$\;

Compute $\varphi(P_{i,1})$ and $\varphi(P_{i,2})$\;\label{line: phi(Pi12)}

For all $j\in\{1,2\}$ and $k\in\{1,2,3,4\}$, let $\underline{P}_{i,j,k}\in E^4[q_i^{e_i}]$ be the tuple with $P_{i,j}$ in position $k$ and $0$ elsewhere\;

$\mathcal{B}_i\longleftarrow\{(\widetilde{\alpha}(\underline{P}_{i,j,k}),\Phi(\underline{P}_{i,j,k}))\mid 1\leq j\leq 2, \ 1\leq k\leq 4\}$\;

}\label{line: for end1}

$\mathcal{C}_i\longleftarrow \mathcal{B}_i$ for $1\leq i\leq s$\;
\For{i=1 \KwTo s-1}{\label{line: for start2}
Compute $F_i$ of kernel $\langle \mathcal{C}_i\rangle$\;
\For{j=i+1 \KwTo s}{
$\mathcal{C}_j\longleftarrow F_i(\mathcal{C}_j)$\;\label{line: Update Cj}
}
}\label{line: for end2}
Compute $F_s$ of kernel $\langle \mathcal{C}_s\rangle$\;

Return $F:=F_s\circ\cdots\circ F_1$\;

\caption{\EfficientRep{} returning an efficient representation of an isogeny.}\label{algo: efficient representation}
\end{algorithm}

\begin{lemma}\label{lemma: field definition torsion}
    Let $E/\mathbb{F}_{p^2}$ be a supersingular elliptic curve and $n\in\mathbb{Z}_{>0}$. Then $E[n]$ is defined over an extension of degree at most $6\phi(n)$ of $\mathbb{F}_{p^2}$, where $\phi$ is Euler's totient function.
\end{lemma}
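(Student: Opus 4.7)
The plan is first to reduce to the case $\gcd(n,p)=1$, then analyze the action of the $p^2$-Frobenius on $E[n]$ via the restricted list of possible traces for supersingular curves, and finally convert a small-exponent identity $\pi^d = \pm p^d$ into the desired bound on the field of definition.

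For the reduction, since $E$ is supersingular we have $E[p^r]=0$ for all $r\geq 1$ (property (2) of the preliminaries). Writing $n = p^v m$ with $\gcd(m,p)=1$ gives $E[n]=E[m]$, and since $\phi(m) \leq \phi(n)$, it suffices to prove the statement assuming $\gcd(n,p)=1$. Fix the $p^2$-Frobenius $\pi \in \End(E)$, and recall that $E[n]$ is defined exactly over $\mathbb{F}_{p^{2k}}$, where $k$ is the order of $\pi$ acting on $E[n]$. So the goal reduces to showing $k \leq 6\phi(n)$.

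The characteristic polynomial of $\pi$ is $x^2 - tx + p^2$ with $|t|\leq 2p$ (Hasse) and $p \mid t$ (supersingularity), leaving the five possibilities $t \in \{0,\pm p, \pm 2p\}$. I would handle each case in turn:
\begin{itemize}
    \item If $t = \pm 2p$, then the characteristic polynomial is $(x\mp p)^2$, and since $\End(E)\otimes \Q$ is a division algebra, $\pi = \pm p$; thus $\pi^2 = p^2$.
    \item If $t = 0$, then $\pi^2 = -p^2$, so $\pi^4 = p^4$.
    \item If $t = \pm p$, a short computation using $\pi^2 = \pm p\pi - p^2$ gives $\pi^3 = \mp p^3$, hence $\pi^6 = p^6$.
\end{itemize}
In every case one obtains $\pi^d = p^d$ for some $d \in \{1,2,4,6\}$.

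To finish, observe that $\pi^d = p^d$ in $\End(E)$ implies $\pi^{dj}$ acts on $E[n]$ as multiplication by $p^{dj}\bmod n$. So $\pi^{dj} = \mathrm{id}_{E[n]}$ as soon as $p^{dj}\equiv 1\pmod{n}$, which by Euler's theorem (recalling $\gcd(n,p)=1$) holds with $j=\phi(n)$. Therefore the order $k$ of $\pi$ on $E[n]$ divides $d\cdot \phi(n)\leq 6\phi(n)$, giving the stated bound. The main point requiring care is the case $t=\pm p$, where one must verify the cube relation $\pi^3=\mp p^3$ explicitly rather than relying solely on $\pi^2-t\pi+p^2=0$; the rest is bookkeeping.
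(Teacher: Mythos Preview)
Your proof is correct and follows the same approach as the paper: a case analysis on the trace of the $p^2$-Frobenius $\pi$ (using supersingularity to restrict to $t\in\{0,\pm p,\pm 2p\}$) yielding $\pi^d=[p^d]$ for some small $d$, followed by Euler's theorem to bound the order of $\pi$ on $E[n]$. Your execution is in fact slightly cleaner than the paper's: you work directly with the endomorphism identity $\pi^d=[p^d]$, whereas the paper routes the conclusion through a Weil-pairing computation and, because it insists on a uniform $12\mid\delta$ rather than your case-dependent $d\le 6$, must treat $n=2$ separately; your explicit reduction to $\gcd(n,p)=1$ also makes the use of Euler's theorem airtight.
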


\begin{proof}
    We first compute the characteristic polynomial of iterates of the Frobenius. Let $\chi_{p^2}:=(X-\alpha)(X-\beta)$ be the characteristic polynomial of the $p^2$ Frobenius $\pi_{p^2}$. Then $\chi_{p^{2\delta}}:=(X-\alpha^\delta)(X-\beta^\delta)$ is the characteristic polynomial of the $p^{2\delta}$ Frobenius $\pi_{p^{2\delta}}$, for any $\delta\in\mathbb{Z}_{>0}$. 

     Since $E$ is supersingular $p\mid\Tr(\pi_{p^2})=\alpha+\beta$, and $|\Tr(\pi_{p^2})|\leq 2p$ so $\Tr(\pi_{p^2})\in\{0,\pm p,\pm 2p\}$ \cite[Proposition 3.6]{Schoof_1995}. We consider these possibilities in three cases: 

     If $\Tr(\pi_{p^2})=0$, then $\chi_{p^2}=X^2+p^2=(X-ip)(X+ip)$ so
     \[\Tr(\pi_{p^{2\delta}})=(ip)^\delta+(-ip)^\delta=\left\{ 
     \begin{array}{cl}
        2p^\delta  & \mbox{if } \delta\equiv 0 \mod 4 \\
        -2p^\delta & \mbox{if } \delta\equiv 2 \mod 4 \\
        0 & \mbox{otherwise}
     \end{array}\right.\]

     If $\Tr(\pi_{p^2})=\pm p$, then $\chi_{p^2}=X^2\mp pX+p^2=(X\mp pe^{i\pi/3})(X\mp pe^{-i\pi/3})$ so
     \[\Tr(\pi_{p^{2\delta}})=(\pm 1)^\delta p^\delta(e^{i\delta\pi/3}+e^{-i\delta\pi/3})=\left\{ 
     \begin{array}{cl}
        2p^\delta  & \mbox{if } \delta\equiv 0 \mod 6 \\
        \mp 2p^\delta & \mbox{if } \delta\equiv 3 \mod 6 \\
        \pm p^\delta & \mbox{if } \delta\equiv \pm 1 \mod 6 \\
        -p^\delta & \mbox{if } \delta\equiv \pm 2 \mod 6 
     \end{array}\right.\]

     Finally, if $\Tr(\pi_{p^2})=\pm 2p$, then $\chi_{p^2}=X^2\mp 2pX+p^2=(X\mp p)^2$ so
     \[\Tr(\pi_{p^{2\delta}})=2(\pm p)^\delta \]
     
     In all cases, if $\delta\equiv 0 \mod 12$, we have $\Tr(\pi_{p^{2\delta}})=2p^\delta$, so $\chi_{p^{2\delta}}=(X-p^\delta)^2$. It follows that $\pi_{p^{2\delta}}=[p^\delta]$.

     Now, if we assume $\phi(n) \mid \delta$, then $p^\delta\equiv 1 \mod n$. For all $P,Q\in E[n]$:
     \[e_n(\pi_{p^{2\delta}}(P),Q)=e_n([p^\delta]P,Q)=e_n(P,Q)^{p^{\delta}}=e_n(P,Q).\]
     In particular, for all $P\in E[n]$, $e_n(\pi_{p^{2\delta}}(P)-P,Q)=1$ for every $Q\in E[n]$. By properties of the Weil pairing, $\pi_{p^{2\delta}}(P)-P=\mathcal{O}_E$ and $P\in E(\mathbb{F}_{p^{2\delta}})$. 
     
     Hence, $E[n]$ is defined over $\mathbb{F}_{p^{2\delta}}$ provided that $12$ and $\phi(n)$ divide $\delta$. If $n$ has an odd prime factor or $n=2^k$ with $k\geq 2$, then $\phi(n)$ is even so $12 \mid 6\phi(n)$ so $\delta=6\phi(n)$ satisfy the desired conditions. If $n=2$, then $E[n]$ is formed by $0$ and the $(x,0)$ where $x$ is the root of a cubic polynomial equation over $\mathbb{F}_{p^2}$ (Weierstrass equation of $E$). Hence,  $E[2]$ is defined over an extension of degree at most $3$ of $\mathbb{F}_{p^2}$. 
\end{proof}

\begin{proposition}\label{prop: complexity efficient rep}
    Algorithm \ref{algo: efficient representation} terminates and is correct. It requires $O(\log(d))$ evaluations of the input $d$-isogeny $\varphi$ on points defined over an extension of degree $O(D)$ of $\mathbb{F}_{p^2}$ and 
    \[O(D^3\log^3(p)\log(d)+D^{10}M(p)\log^2(d))\] 
    elementary (bitwise) arithmetic operations, where $M(p)$ is the complexity of the multiplication over $\mathbb{F}_p$.
\end{proposition}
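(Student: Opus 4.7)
The plan is to establish separately the correctness of the algorithm, the count of evaluations of $\varphi$ together with their field of definition, and then bound the bit complexity of all remaining operations.

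For correctness and termination, I would first observe that prime powers $q_1^{e_1},\ldots,q_s^{e_s}\le D$ coprime with $d$ whose product $N>d$ can be selected greedily with $s=O(\log d)$ (excluding the at most $O(\log d)$ primes dividing $d$ does not materially affect this). Pollack--Trevi\~no's algorithm then finds $a_1,\ldots,a_4\in\mathbb{Z}$ with $a_1^2+\cdots+a_4^2=N-d$ in expected polynomial time in $\log(N-d)=O(\log d)$. The construction preceding the algorithm together with Kani's lemma then shows that $F$ is an $8$-dimensional $N$-isogeny with kernel given by equation~(\ref{eq: ker(F)}), so computing $\ker(F)$ reduces to evaluating $\Phi$ (equivalently $\varphi$) on a basis of $E^4[N]$. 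Lemma~\ref{lemma: decompose F} guarantees that the chain $F_s\circ\cdots\circ F_1$ produced by the algorithm equals $F$, with the iteratively transported sets $\mathcal{C}_i$ equal to the kernels of $F_i$ in the lemma.

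For the count of evaluations of $\varphi$ and the field of definition, I would observe that $\varphi$ appears only in line~\ref{line: phi(Pi12)}, applied to two points of $E[q_i^{e_i}]$ for each $i\in\{1,\ldots,s\}$, giving $2s=O(\log d)$ evaluations. By Lemma~\ref{lemma: field definition torsion}, $E[q_i^{e_i}]$ is defined over an extension of $\mathbb{F}_{p^2}$ of degree at most $6\phi(q_i^{e_i})\le 6q_i^{e_i}\le 6D=O(D)$, as claimed.

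For the bit complexity, which I expect to be the main difficulty, I would split the work into three blocks. First, generating a basis of $E[q_i^{e_i}]$ in $\mathbb{F}_{p^{2k}}$ with $k=O(D)$ requires sampling a point of order $q_i^{e_i}$ and performing scalar multiplications by integers of size $O(\log\#E(\mathbb{F}_{p^{2k}}))=O(D\log p)$; each such scalar multiplication costs $O(D\log p)$ group operations, each operation costing $O(D^2\log^2 p)$ bit operations over the degree-$O(D)$ extension, for a total of $O(D^3\log^3 p)$ per iteration and $O(D^3\log^3(p)\log d)$ over all $s$ iterations, explaining the first term. Second, the evaluations of $\widetilde{\alpha}$ and $\Phi$ on the $\underline{P}_{i,j,k}$ reduce to small-integer matrix actions and four applications of $\varphi$ per basis, which are dominated by the isogeny costs. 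Third, each $F_i$ is assembled from $e_i$ eight-dimensional $q_i$-isogenies via the theta model with optimal strategy, at a cost of $O(q_i^8)=O(D^8)$ base-field operations per step, which translates into $O(D^{10}M(p))$ bit operations once one accounts for arithmetic in the degree-$O(D)$ extension; propagating the remaining kernels $\mathcal{C}_{i+1},\ldots,\mathcal{C}_s$ through the chain introduces an additional factor of $s=O(\log d)$, and summing $\sum_i e_i=O(\log N)=O(\log d)$ yields the $O(D^{10}M(p)\log^2 d)$ term. The delicate point in this last block is matching the per-step cost claimed for the theta-model isogeny in dimension~$8$ and the quasi-linear savings of the optimal strategy with the references cited in the paragraph preceding the statement; I would appeal directly to those references to avoid reproving them.
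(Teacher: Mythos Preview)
Your proposal is correct and follows essentially the same approach as the paper: correctness via Kani's lemma and Lemma~\ref{lemma: decompose F}, the $2s=O(\log d)$ count of $\varphi$-evaluations with field of definition bounded by Lemma~\ref{lemma: field definition torsion}, and the same cost breakdown (torsion basis generation giving the $D^3\log^3(p)\log d$ term, and the double loop of computing the $F_i$ and pushing the remaining $\mathcal{C}_j$ through them giving the $D^{10}M(p)\log^2 d$ term). The only cosmetic differences are that the paper phrases the second factor as $s^2$ rather than $s\cdot\sum_i e_i$ (both are $O(\log^2 d)$), and that it does not actually invoke the quasi-linear optimal strategy savings in the final bound, so you need not worry about that delicate point.
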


\begin{proof}
    Correctness has been justified by Equation \ref{eq: ker(F)} and Lemma \ref{lemma: decompose F}. Termination is clear. 

    Now we compute the complexity. If $N$ is only slightly bigger than $d$, then $s=O(\log(d))$ and finding a suitable $N$ on line~\ref{line: coprime with degree} of the algorithm takes $O(\log(d))$ arithmetic operations. 
    
    Finding the $a_i$ on line~\ref{line: find ais} costs $O(\log(N)/\log\log(N))=O(\log(d))$ with Pollack and Trevi\~{n}o's algorithm \cite{Pollack2018}.

    For $i\in\{1,\cdots, s\}$, a basis of $E[q_i^{e_i}]$ is defined over a field extension of degree $\delta=O(q_i^{e_i})=O(D)$ of $\mathbb{F}_{p^2}$ by Lemma \ref{lemma: field definition torsion}. To generate such a basis, we first sample a random point $P'\in E(\mathbb{F}_{p^{2\delta}})$ and compute $P_{i,1}:=[M/q_i^{e_i}]P'$, where $M:=\# E(\mathbb{F}_{p^{2\delta}})$ until $P$ has order $q_i^{e_i}$. By the same method, we sample $P_{i,2}\in E[q_i^{e_i}]$ until $(P_{i,1},P_{i,2})$ is a basis of $E[q_i^{e_i}]$.
    
    To sample $P'\in E(\mathbb{F}_{p^{2\delta}})$, we first sample $x\in\mathbb{F}_{p^{2\delta}}$ in time $O(D\log(p))$ repeatedly ($O(1)$ times at most) until we find $y\in\mathbb{F}_{p^{2\delta}}$ such $P'=(x,y)\in E$. Computing $y$ requires a square root computation in $\mathbb{F}_{p^{2\delta}}$ which costs $O(\log^3(p^{2\delta}))=O(D^3\log^3(p))$ elementary arithmetic operations by Cipolla-Lehmer's algorithm. $E$ being supersingular, point counting on $E$ to compute $M$ costs $O(1)$ and the scalar multiplication by $M/q_i^{e_i}$ costs $O(\log(M))=O(D\log(p))$ arithmetic operations over $\mathbb{F}_{p^{2\delta}}$ (costing $O(D^2 M(p))$ each). Testing that $(P_{i,1},P_{i,2})$ is a basis costs $O(D)$ elliptic curve additions so $O(D)$ arithmetic operations over $\mathbb{F}_{p^{2\delta}}$ (we compute the $[k]P_{i,1}$ and $[l]P_{i,2}$ for $1\leq k, l\leq q_i^{e_i}-1$ and conclude that we have a basis if these two sets are disjoint). Only $O(1)$ samplings of $P_{i,1}$ and $P_{i,2}$ are necessary before we find a basis. Hence, the overall complexity of the basis computation is
    \[O(D^3\log^3(p)+D^3M(p)\log(p)+D^3M(p))=O(D^3\log^3(p)).\]

    Line~\ref{line: phi(Pi12)} costs two evaluations of $\varphi$ and line $9$ costs eight scalar multiplications by the $a_i$, costing $O(\log(d))$ operations over $\mathbb{F}_{p^{2\delta}}$ each. Hence, the total cost of the loop of lines~\ref{line: for start1}--\ref{line: for end1} is 
    \[O(s(D^3\log^3(p)+D^2M(p)\log(d)))=O(\log(d)(D^3\log^3(p)+D^2M(p)\log(d)))\]
    elementary arithmetic operations and $2s=O(\log(d))$ evaluations of $\varphi$.

    Finally, computing each $F_i$ costs $O(e_iq_i^8)=O(D^8)$ arithmetic operations over $\mathbb{F}_{p^2}$ and computing the basis $\mathcal{C}_j$ ($i+1\leq j\leq s$) on line~\ref{line: Update Cj} costs $8(s-i)$ point evaluations, costing $O(e_iq_i^8)=O(D^8)$ arithmetic operations over an extension of degree $O(D)$ of $\mathbb{F}_{p^2}$. The total cost of the loop of lines~\ref{line: for start2} --\ref{line: for end2} is 
    \[O(sD^8M(p)+s^2D^{10}M(p))=O(D^{10}M(p)\log^2(d)).\]
\end{proof}

\subsection{Smoothness test and factorization with the ECM method}\label{sec: ECM}

In this section, we explain how to test if an integer $N$ is $B$-smooth and find its factorization if it is the case. 
A naive method would be to use trial division, but it is not optimal when $B$ is subexponential (which will be the case in the paper). An alternate method would be to factor $N$ with the General Number Field Sieve (GNFS) \cite{GNFS} and test if its prime factors are $\leq B$. 
However, GNFS underperforms with smooth integers so we propose a faster method relying on the elliptic-curve factorization method (ECM) due to Lenstra \cite{Lenstra_ECM}. 
Finding a prime factor of $N$ with this method takes time $L_\ell(1/2,\sqrt{2})$, where $\ell$ is the smallest prime divisor of $N$ and with the usual notation
 \[L_x(\alpha,\beta):=\exp\left((\beta+o(1))(\log(x))^\alpha(\log\log(x))^{1-\alpha}\right),\]
where $o(1)$ is for $x \to \infty$. 
Hence, to test the $B$-smoothness of $N$, we simply apply ECM to find a factor $k \mid N$ after expected time $L_B(1/2,\sqrt{2})$. If the running time exceeds what it should be, it means that $N$ is not $B$-smooth and we stop. Otherwise, we continue and try to factor $k$ and $N/k$ recursively until we have either completely factored $N$ or concluded it is not $B$-smooth. Algorithm \ref{algo: SmoothFact} follows.

\begin{algorithm}
\SetAlgoLined
\KwData{An integer $N\in\mathbb{Z}_{>0}$ and a smoothness bound $B>0$.}
\KwResult{$\bot$ if $N$ is not $B$-smooth, and primes $\ell_1,\cdots, \ell_r\leq B$ such that $N=\prod_{i=1}^r\ell_i$ otherwise.}
\eIf{$N$ is prime}{
\eIf{$N\leq B$}{
Return $N$\;
}{
Return $\bot$\;
}
}{
 Use ECM to find a strict divisor $k \mid N$ in time $L_B(1/2,\sqrt{2})$\;
 \eIf{ECM does not terminate in time $L_B(1/2,\sqrt{2})$}{
 Return $\bot$\;}{
 $R\longleftarrow \SmoothFact{}(k,B), R'\longleftarrow \SmoothFact{}(N/k,B)$\;
 \eIf{$R=\bot$ or $R'=\bot$}{
 Return $\bot$\;
 }{
 Return $R\cup R'$\;
 }
 }
}

\caption{$\SmoothFact{}$ determining if an integer is smooth and returning its factorization.}\label{algo: SmoothFact}
\end{algorithm}

If $r$ is the number of prime divisors of $N(\theta)$ (with multiplicity), then $r=O(\log(|\Delta_{\mathfrak{O}}|))$ and Algorithm \ref{algo: SmoothFact} can terminate with at most $r$ calls to ECM so in time $rL_B(1/2,\sqrt{2})=L_B(1/2,\sqrt{2})$.

\section{Reduction of $\mathfrak{O}$-orienting problem for special discriminants}\label{sec: smooth disc}


We begin with a nice assumption about the discriminant of our imaginary quadratic order $\mathfrak{O}$. The key ideas from this special case provide a foundation for the general cases we consider in Section~\ref{sec: decision prob with oracle}.

Let $d = \prod_{i=1}^r\ell_i$ be a product of small distinct primes. Let $\mathfrak{O}$ denote the maximal order of $K:=\Q(\sqrt{-d})$, so $\Delta_\mathfrak{O}=-d$ if $d\equiv -1 \mod 4$ and $-4d$ otherwise. In particular, $(\Delta_\mathfrak{O}/\ell_i) = 0$ for all $i = 1,\dots,r$ and $\mathfrak{O}$ is generated by $\omega:=(1+\sqrt{-d})/2$ if $d\equiv -1 \mod 4$ and by $\omega:=\sqrt{-d}$ otherwise. 
Hence, $\alpha:=\sqrt{-d}$ generates $\mathfrak{O}$ if $d\not\equiv -1 \mod 4$ or $(\mathbb{Z} + 2\mathfrak{O})$ if $d\equiv -1 \mod 4$. 
We use an oracle which solves Problem~\ref{prob:OOrientingDecision} to find an endomorphism $\varphi$ of $E$ to which we map $\alpha$, thus determining an embedding $\mathfrak{O}\hookrightarrow\End(E)$ either by mapping $\alpha=\omega$ to $\varphi$ or $(1+\alpha)/2=\omega$ to $(1+\varphi)/2$.  We use the fact that the primes $\ell_i$ are ramified in $K$.

We walk the $\mathfrak{O}$-oriented $\ell_i$-isogeny volcanoes in order to obtain the endomorphism $\varphi$ on $E$ which is the image of the generator $\omega$ under an embedding $\iota:\mathfrak{O}\hookrightarrow\End(E)$. 
The ideals $\mathfrak{l}_i$ above $\ell_i$ in $\mathfrak{O}$ determine horizontal degree-$\ell_i$ isogenies between $\mathfrak{O}$-oriented curves, beginning and ending with $E$. 
To see this, we need the following fact about horizontal isogenies of $\mathfrak{O}$-oriented elliptic curves:

\begin{proposition}\cite[Proposition 4.1]{OSIDH_Onuki}\label{prop: volcano structure}
Let $(E,\iota)$ be an $\mathfrak{O}$-oriented supersingular elliptic curve and $\ell$ be a prime number. Then:
\begin{description}
\item[(i)] If $\ell$ does not divide the conductor of $\mathfrak{O}$, there is no ascending, $(\Delta_{\mathfrak{O}}/\ell)+1$ horizontal and $\ell-(\Delta_{\mathfrak{O}}/\ell)$ descending $\ell$-isogenies.
\item[(ii)] If $\ell$ divides the conductor of $\mathfrak{O}$, there is one ascending, no horizontal and $\ell$ descending $\ell$-isogenies.
\end{description}
\end{proposition}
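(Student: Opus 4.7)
The plan is to partition the $\ell+1$ outgoing $\ell$-isogenies $\varphi:E\to E'$ according to the imaginary quadratic order in $K:=\mathfrak{O}\otimes\Q$ with respect to which the codomain is primitively oriented by the pushforward
\[\varphi_*\iota(\alpha)\;:=\;\tfrac{1}{\ell}\,\varphi\circ\iota(\alpha)\circ\widehat{\varphi}.\]
Using $\widehat{\varphi}\circ\varphi=[\ell]$ I would verify two facts about the order
\[\mathfrak{O}'_\varphi\;:=\;\{\alpha\in K : \varphi_*\iota(\alpha)\in\End(E')\}:\]
first, $\mathfrak{O}'_\varphi \supseteq \mathbb{Z}+\ell\mathfrak{O}$; second, for any $\alpha\in\mathfrak{O}'_\varphi$ one has $\ell\alpha\in\mathfrak{O}$, which follows from the identity $\widehat{\varphi}\circ\varphi_*\iota(\alpha)\circ\varphi=\ell\,\iota(\alpha)$ together with primitivity of $\iota$. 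Writing $\mathfrak{O}'_\varphi=\mathbb{Z}+d\,\mathcal{O}_K$ with $d\mid\ell f$ (where $f$ is the conductor of $\mathfrak{O}$), the second condition gives $f/\gcd(f,\ell)\mid d$. This leaves $d\in\{f,\ell f\}$ when $\ell\nmid f$ and $d\in\{f/\ell,f,\ell f\}$ when $\ell\mid f$, so $\mathfrak{O}'_\varphi$ is \emph{descending} ($\mathbb{Z}+\ell\mathfrak{O}$), \emph{horizontal} ($\mathfrak{O}$), or \emph{ascending} ($\mathfrak{O}^\sharp:=\mathbb{Z}+(f/\ell)\mathcal{O}_K$, only when $\ell\mid f$).

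Next I would count. An $\ell$-isogeny $\varphi$ is horizontal iff its kernel $H\subset E[\ell]$ is stable under $\iota(\mathfrak{O})$, i.e.~is a sub-$\mathfrak{O}/\ell\mathfrak{O}$-module of length $1$ inside the length-$2$ module $E[\ell]$. In case (i), $\ell\nmid f$, so $\mathfrak{O}/\ell\mathfrak{O}\cong\mathcal{O}_K/\ell\mathcal{O}_K$ and the splitting of $\ell$ yields $\mathfrak{O}/\ell\mathfrak{O}\cong\F_\ell\times\F_\ell$, $\F_{\ell^2}$, or $\F_\ell[\epsilon]/(\epsilon^2)$ according as $(\Delta_\mathfrak{O}/\ell)=+1,-1,0$, with $2$, $0$, or $1$ stable lines respectively, giving $1+(\Delta_\mathfrak{O}/\ell)$ horizontal isogenies. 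Since ascending is impossible in case (i), the remaining $\ell-(\Delta_\mathfrak{O}/\ell)$ are descending.

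In case (ii), $\ell\mid f$, the same module-theoretic analysis shows $E[\ell]$ is not free over $\mathfrak{O}/\ell\mathfrak{O}$, so no cyclic sub-lattice is $\mathfrak{O}$-stable and there are no horizontal isogenies. For ascending, I would use duality: an ascending isogeny out of $(E,\iota)$ is the dual of a descending isogeny from the unique $\mathfrak{O}^\sharp$-oriented neighbor $(E^\sharp,\iota^\sharp)$ down to $(E,\iota)$, and uniqueness of that neighbor gives exactly one ascending isogeny, leaving $\ell$ descending. The main obstacle I anticipate is the clean verification of the constraint $\ell\alpha\in\mathfrak{O}$ for $\alpha\in\mathfrak{O}'_\varphi$ in the quaternionic setting and the corresponding module-theoretic argument in case (ii) ruling out horizontal isogenies; both require tracking the pushforward orientation together with the module structure on $E[\ell]$ as an $\mathfrak{O}/\ell\mathfrak{O}$-module, especially when $\ell$ divides the conductor and $E[\ell]$ decomposes non-freely.
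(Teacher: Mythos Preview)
The paper does not give its own proof of this proposition; it is quoted verbatim from \cite[Proposition~4.1]{OSIDH_Onuki}. So there is no in-paper argument to compare against, and I evaluate your sketch on its own merits. Your plan for case~(i) is sound: the containments $\Z+\ell\mathfrak{O}\subseteq\mathfrak{O}'_\varphi$ and $\ell\,\mathfrak{O}'_\varphi\subseteq\mathfrak{O}$ pin down the three possible orders, the characterisation ``$\ker\varphi$ is $\iota(\mathfrak{O})$-stable'' correctly captures horizontal-or-ascending, and the count via the ring structure of $\mathfrak{O}/\ell\mathfrak{O}\cong\mathcal{O}_K/\ell\mathcal{O}_K$ gives $1+(\Delta_\mathfrak{O}/\ell)$ stable lines. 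Since ascending is impossible when $\ell\nmid f$, these are all horizontal and the rest descend.

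Case~(ii) has a genuine error. Writing $\mathfrak{O}=\Z[f\omega_K]$ with $\ell\mid f$ and $\theta:=f\omega_K$, one has $\theta^2\equiv 0\pmod{\ell\mathfrak{O}}$, so $\mathfrak{O}/\ell\mathfrak{O}\cong\F_\ell[\epsilon]/(\epsilon^2)$ with $\epsilon=\bar\theta$. You assert that $E[\ell]$ is \emph{not} free over this ring and hence has no $\mathfrak{O}$-stable line. The opposite is true: primitivity of $\iota$ forces $\epsilon$ to act nontrivially. Indeed, $\epsilon\cdot E[\ell]=0$ would mean $\iota(\theta)$ factors through $[\ell]$, i.e.\ $\iota(\theta/\ell)=\iota((f/\ell)\omega_K)\in\End(E)$; but $(f/\ell)\omega_K\notin\mathfrak{O}$, contradicting optimality. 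So $E[\ell]$ \emph{is} free of rank one and there is exactly one $\mathfrak{O}$-stable line, just as in the ramified subcase of~(i). The point you are missing is that ``$\mathfrak{O}$-stable kernel'' only means ``horizontal or ascending''; in case~(ii) the unique stable line gives the ascending isogeny, not a horizontal one. To close the argument you must identify that stable-kernel isogeny with the ascending one (for instance via your duality idea, showing that its pushforward orientation actually extends to $\mathfrak{O}^\sharp=\Z+(f/\ell)\mathcal{O}_K$, which one checks by verifying that $\varphi\circ\iota(\theta)\circ\widehat{\varphi}$ is divisible by $\ell^2$ on this kernel), and only then conclude there are no horizontal isogenies and $\ell$ descending ones. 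Your closing caveat already flags this step as the difficulty; the concrete fix is to drop the incorrect non-freeness claim and instead prove that the single $\mathfrak{O}$-stable kernel yields an ascending isogeny.
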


Let $\iota:\mathfrak{O}\hookrightarrow\End(E)$ be an orientation and $\varphi:=\iota(\alpha)$. Then $\deg(\varphi)=N(\alpha)=\prod_{i=1}^r\ell_i$ so we may write $\varphi:=\varphi_r\circ\cdots\circ\varphi_1$, where $\varphi_i$ is an isogeny of degree $\ell_i$ for all $i\in\{1,\cdots,r\}$. For each $i$, let $\mathfrak{O}\ell_i = (\mathfrak{l}_{i})^2$. The ideals $\mathfrak{l}_i$ determine the horizontal isogenies of $\mathfrak{O}$-oriented curves: 

\begin{lemma}\label{lemma: horizontal decomp}
    In the setting described above, all of the isogenies $\varphi_i$ in the decomposition of $\varphi$ are horizontal.
\end{lemma}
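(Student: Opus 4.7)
The plan is to exhibit the decomposition $\varphi = \varphi_r \circ \cdots \circ \varphi_1$ directly via the class group action, where each factor is horizontal by construction.

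First, I would set up the ideal-theoretic picture. Since each $\ell_i$ divides $d$, it ramifies in $K = \mathbb{Q}(\sqrt{-d})$, so $\mathfrak{O}\ell_i = \mathfrak{l}_i^2$ for some prime ideal $\mathfrak{l}_i$ of norm $\ell_i$. Because $\mathfrak{O}$ is maximal, its conductor is $1$, so in particular no $\ell_i$ divides it; hence Proposition~\ref{prop: volcano structure}(i) applies at every $\ell_i$. Next, I would factor the principal ideal $(\alpha)$: from $(\alpha)(\overline{\alpha}) = (N(\alpha)) = (d) = \prod_{i=1}^r(\ell_i) = \prod_{i=1}^r \mathfrak{l}_i^2$ and $\overline{\alpha} = -\alpha$, we get $(\alpha)^2 = \prod_{i=1}^r \mathfrak{l}_i^2$, so by unique factorization of ideals in $\mathfrak{O}$,
\[(\alpha) = \prod_{i=1}^r \mathfrak{l}_i.\]

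Second, I would construct the decomposition via the class group action. Define $(E_0,\iota_0) := (E,\iota)$ and inductively $(E_j,\iota_j) := \mathfrak{l}_j * (E_{j-1},\iota_{j-1})$. Let $\varphi_j : E_{j-1} \to E_j$ be the isogeny of degree $N(\mathfrak{l}_j) = \ell_j$ afforded by this action. Since the codomain of each $*$-action is again $\mathfrak{O}$-oriented by the same maximal order $\mathfrak{O}$, the $\mathfrak{O}$-level does not change, so $\varphi_j$ is horizontal by Proposition~\ref{prop: volcano structure}(i) (the unique horizontal choice at a ramified prime).

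Third, I would identify the composition with $\varphi$. The group action is compatible with ideal multiplication, so the composed isogeny $\varphi_r \circ \cdots \circ \varphi_1 : E_0 \to E_r$ is the isogeny corresponding to the action of $\prod_{i=1}^r \mathfrak{l}_i = (\alpha)$. By the definition of the action, this isogeny has kernel $\bigcap_{\beta \in (\alpha)} \ker \iota(\beta) = \ker \iota(\alpha) = \ker \varphi$ (the intersection is attained at $\beta = \alpha$ since $\ker \iota(\alpha\gamma) \supseteq \ker \iota(\alpha)$ for all $\gamma \in \mathfrak{O}$). Two isogenies with the same domain and kernel agree up to post-composition by an isomorphism, so after identifying $E_r$ with $E$ via that isomorphism we obtain $\varphi = \varphi_r \circ \cdots \circ \varphi_1$ with each $\varphi_j$ of degree $\ell_j$ and horizontal, as claimed.

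The main obstacle I anticipate is the bookkeeping in the last step, namely carefully justifying that the isogeny associated to a principal ideal $(\alpha)$ really is $\iota(\alpha)$ (up to the canonical isomorphism identifying $E_{(\alpha)}$ with $E$), and that this factorization matches the one referenced in the lemma's setup. Everything else is a direct application of the ramification structure and Proposition~\ref{prop: volcano structure}.
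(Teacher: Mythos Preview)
Your proposal is correct and follows essentially the same approach as the paper: factor the principal ideal $(\alpha)=\prod_i \mathfrak{l}_i$ using ramification of the $\ell_i$, and identify the $\varphi_i$ with the horizontal isogenies given by the action of the $\mathfrak{l}_i$. The paper's proof is considerably more terse (it simply asserts the ideal factorization and concludes), whereas you spell out the kernel computation $E[(\alpha)]=\ker\iota(\alpha)$ and the up-to-isomorphism identification; your anticipated ``bookkeeping obstacle'' is handled exactly as you outline, and uniqueness of the decomposition (distinct $\ell_i$, squarefree degree) ensures your constructed factorization coincides with the one in the lemma's setup.
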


\begin{proof}
    Since $N(\alpha)=\prod_{i=1}^r\ell_i$, we have $\mathfrak{O}\alpha=\prod_{i=1}^r\mathfrak{l}_i$, $\mathfrak{l}_i$ being the unique prime ideal of $\mathfrak{O}$ lying above $\ell_i$ for all $i\in\{1,\cdots, r\}$. Hence, the $\varphi_i$ intervening in the decomposition of $\varphi=\iota(\alpha)$ are horizontal isogenies given by the action of $\mathfrak{l}_i$.
\end{proof}

Now, we describe the steps to obtain an endomorphism $\varphi=\varphi_r\circ\cdots\circ\varphi_1\in\End(E)$ which will be the image of $\alpha$. Let $E_0:= E$.

For $i=0$, we find the unique isogeny $\varphi_1:E_0\longrightarrow E_1$ which corresponds to the action of $[\mathfrak{l}_1]$ on $(E_0,\iota)$ by computing each of the $\ell_1 + 1$ outgoing isogenies and querying our oracle to find the one whose codomain $E_1$ is in fact orientable by $\mathfrak{O}$. We continue this process to compute each $\varphi_i$ by using the oracle to find the correct degree-$\ell_i$ isogeny to another $\mathfrak{O}$-orientable curve. At the last step, we compute the degree-$\ell_r$ isogeny from $E_{r-1}\longrightarrow E_r$ and then post-compose with an isomorphism $E_r\cong E_0$: We let $\varphi_r$ denote this composition. See Algorithm~\ref{algo: special reduction search to decision} for the algorithmic description of this process.

\begin{example}\label{ex: non unique orientation}
    Consider $p=41$ and $E_0:y^2=x^3+1$ defined over $\mathbb{F}_{41^2}=\mathbb{F}_{41}[\zeta]$ with $\zeta^2+\zeta+1=0$. Consider the Frobenius endomorphism $\pi:(x,y)\mapsto (x^p,y^p)$ and the automorphism $\tau: (x,y)\mapsto (\zeta x,y)$. Then $\varphi:=\pi+\tau$ satisfies the polynomial equation $\varphi^2+\varphi+42=0$ so it defines an orientation of the maximal order $\mathcal{O}_K:=\mathbb{Z}\left[(1+\sqrt{-167})/2\right]$ of the imaginary quadratic field $K:=\mathbb{Q}(\sqrt{-167})$, mapping $(1+\sqrt{-167})/2$ to $\varphi$. 
    
    The prime ideal $2$ splits in $K$ so there are two horizontal $2$-isogenies and one descending $2$-isogeny with domain $E_0$. However, all these isogenies have the same codomain $E_1$ (up to isomorphism) with $j$-invariant $j(E_1)=3$. So $E_1$ is both $\mathcal{O}_K$-oriented and $(\mathbb{Z}+2\mathcal{O}_K)$-oriented. 
\end{example}

Motivated by this example a question arises: 
If $\psi_i: E_{i-1}\longrightarrow E_i'$ is an $\ell_i$-isogeny with $E_i'$ $\mathfrak{O}$-orientable, how do we know that $\psi_i$ is the unique horizontal isogeny $\varphi_i$ given by the action of $\mathfrak{l}_i$ on $(E_{i-1},\iota)$? In fact, $\varphi_i$ and $\psi_i$ could be distinct horizontal isogenies for distinct primitive orientations $(E_{i-1},\iota)\neq (E_{i-1},\iota')$ (as in Example \ref{ex: non unique orientation}). Or $\psi_i$ could even be descending and $E_i'$ $\mathfrak{O}$-oriented by a different orientation than the one induced by $\psi_i$, $(E_i,(\psi_i)_*(\iota))$. 
To exclude those cases, we assume $p>|\Delta_{\mathfrak{O}}|\max_{1\leq i\leq r}\ell_i$ and  prove that there is precisely one (primitive) $\mathfrak{O}$-orientation $\iota$ on $E_{i-1}$, which ensures that there is only one isogeny $\varphi_i$ corresponding to the action of $[\mathfrak{l}_i]$ on $(E_{i-1},\iota)$. 
We also prove that codomains of descending isogenies are not $\mathfrak{O}$-orientable. These are consequences of \cite[Theorem 2']{Kaneko1989}, that we recall below.

\begin{theorem}\cite[Theorem 2']{Kaneko1989}\label{lemma: disc ineq}
    Let $\mathcal{O}\subset B_{p,\infty}$ be a maximal order in the quaternion algebra ramifying at $p$ and $\infty$. Let $j_i:\mathfrak{O}_i\lhook\joinrel\longrightarrow\mathcal{O}$ ($i\in\{1,2\}$) be two primitive embeddings of orders in the same imaginary quadratic field $K:=\mathbb{Q}\otimes\mathfrak{O}_1=\mathbb{Q}\otimes\mathfrak{O}_2$ of respective discriminants $\Delta_i$. Assume that $j_1(\mathfrak{O}_1)\neq j_2(\mathfrak{O}_2)$. Then $\Delta_1\Delta_2\geq p^2$.
\end{theorem}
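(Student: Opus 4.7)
The plan is a discriminant comparison inside $\mathcal{O}$. First, I would reduce to the case where $j_1(K) \neq j_2(K)$ as subfields of $B_{p,\infty}$: if they coincided, primitivity would give
\[
j_1(\mathfrak{O}_1) = j_1(K) \cap \mathcal{O} = j_2(K) \cap \mathcal{O} = j_2(\mathfrak{O}_2),
\]
contradicting the hypothesis. Since $K$ is its own centralizer in $B_{p,\infty}$, distinct embedded subfields force generators of $j_1(\mathfrak{O}_1)$ and $j_2(\mathfrak{O}_2)$ not to commute.

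Next, fix $\mathbb{Z}$-generators $\omega_i$ of $j_i(\mathfrak{O}_i)$ with reduced trace $t_i$, form the trace-zero shifts $v_i := 2\omega_i - t_i \in \mathcal{O}$ (satisfying $v_i^2 = \Delta_i$), and consider the rank-$4$ suborder $\Lambda := \mathbb{Z}[\omega_1, \omega_2] \subseteq \mathcal{O}$. The commutator $\tau := \omega_1\omega_2 - \omega_2\omega_1 = \tfrac{1}{4}(v_1 v_2 - v_2 v_1)$ is a nonzero element of $\mathcal{O}$ with reduced trace zero, and a short check shows that it is orthogonal to $1, v_1, v_2$ under the pairing $(x,y) \mapsto \mathrm{Trd}(xy)$. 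In the rational basis $(1, v_1, v_2, \tau)$ the Gram matrix of this pairing is block diagonal, and a direct calculation yields
\[
\mathrm{Nrd}(\tau) = \frac{4\,|\Delta_1|\,|\Delta_2| - c^2}{16}, \qquad |\disc(\Lambda)| = \mathrm{Nrd}(\tau)^2,
\]
where $c := \mathrm{Trd}(v_1 v_2) \in \mathbb{Z}$. Combined with $|\disc(\mathcal{O})| = p^2$ (maximality of $\mathcal{O}$) and the index formula $\disc(\Lambda) = [\mathcal{O}:\Lambda]^2 \cdot \disc(\mathcal{O})$, this becomes $\mathrm{Nrd}(\tau) = [\mathcal{O}:\Lambda] \cdot p$.

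The main obstacle is upgrading the resulting divisibility $\mathrm{Nrd}(\tau) \geq p$ to the full Kaneko inequality. The bare bound yields only $|\Delta_1|\,|\Delta_2| \geq 4p$, which is weaker than $p^2$ for $p \geq 5$; the missing factor of $p$ must come from the hypothesis that $\mathfrak{O}_1$ and $\mathfrak{O}_2$ lie in the \emph{same} imaginary quadratic field $K$. Locally at $p$, both embeddings send $K \otimes \mathbb{Q}_p$ into a common conjugacy class of quadratic subrings of the local division algebra $B_{p,\infty} \otimes \mathbb{Q}_p$, and a local analysis relative to the two-sided maximal ideal $\mathfrak{P}_p \subset \mathcal{O}_p$ (which satisfies $\mathfrak{P}_p^2 = p\,\mathcal{O}_p$) forces the commutator $\tau$ to lie deep enough in $\mathfrak{P}_p$ that $p^2 \mid \mathrm{Nrd}(\tau)$. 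Combined with the explicit formula for $\mathrm{Nrd}(\tau)$ and the Cauchy--Schwarz bound $c^2 \leq 4\,|\Delta_1|\,|\Delta_2|$ on the trace-zero pairing, this yields $|\Delta_1|\,|\Delta_2| \geq p^2$. I expect this local refinement to be the technical heart of the argument; the preceding steps are essentially linear algebra.
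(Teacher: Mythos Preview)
The paper does not prove this statement at all: it is quoted verbatim as \cite[Theorem~2']{Kaneko1989} and then only \emph{applied} (to derive Corollary~\ref{cor: unique orientation}). So there is no ``paper's own proof'' to compare your attempt against.

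That said, on the substance of your sketch: the global linear-algebra part is fine. Your reduction to $j_1(K)\neq j_2(K)$ via primitivity is correct, the basis $(1,v_1,v_2,\tau)$ really is block-orthogonal for $\mathrm{Trd}(xy)$, and the identities
\[
\nrd(\tau)=\frac{4|\Delta_1||\Delta_2|-c^2}{16},\qquad |\disc(\Lambda)|=\nrd(\tau)^2,\qquad \nrd(\tau)=[\mathcal{O}:\Lambda]\cdot p
\]
all check out. The gap is exactly where you flag it: the assertion that ``a local analysis forces $p^2\mid\nrd(\tau)$'' is not an argument, and it is not even clear it is the right target. If $p^2\mid\nrd(\tau)$ held, your own formula would give $4|\Delta_1||\Delta_2|-c^2\geq 16p^2$, hence $|\Delta_1||\Delta_2|\geq 4p^2$, which is \emph{strictly stronger} than Kaneko's $p^2$; the Cauchy--Schwarz bound you invoke is then superfluous, which is a sign that the bookkeeping has slipped. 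What the same-field hypothesis actually buys you, locally at $p$, is that $v_1/f_1$ and $v_2/f_2$ are two square roots of the \emph{same} unit $\Delta_K$ (inert case) or the same uniformizer (ramified case) inside the local division algebra; the cleanest way to exploit this is not to push $\tau$ deeper into $\mathfrak{P}_p$, but to look directly at the integral elements $f_2v_1\pm f_1v_2\in\mathcal{O}$ (whose product is $4f_1f_2\tau$) and control the $p$-valuation of \emph{each} factor separately. That is where the extra factor of $p$ comes from in Kaneko's argument, and your sketch does not supply it.
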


\begin{corollary}\label{cor: unique orientation}
    Let $(E,\iota)$ be a (primitively) $\mathfrak{O}$-oriented curve. Then
    \begin{description}
        \item[(i)] If $|\Delta_\mathfrak{O}|<p$, $\iota$ and $\overline{\iota}: \alpha\longmapsto \iota(\overline{\alpha})$ are the only two (primitive) $\mathfrak{O}$-orientations of $E$.
        \item[(ii)] If $|\Delta_\mathfrak{O}|\ell<p$ and $\psi:(E,\iota)\longrightarrow (E',\iota')$ is a descending $\ell$-isogeny, then $E'$ is not $\mathfrak{O}$-orientable. 
    \end{description}
\end{corollary}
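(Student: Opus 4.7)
The plan is to apply Theorem \ref{lemma: disc ineq} (Kaneko's bound) twice: once to constrain the number of primitive $\mathfrak{O}$-orientations on a single curve, and once to exclude $\mathfrak{O}$-orientations on the target of a descending isogeny.

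For part (i), let $\iota'$ be any primitive $\mathfrak{O}$-orientation of $E$. Both $\iota$ and $\iota'$ are primitive embeddings of orders of discriminant $\Delta_\mathfrak{O}$ into the maximal order $\End(E)\subset B_{p,\infty}$. By Theorem \ref{lemma: disc ineq}, if $\iota(\mathfrak{O})\neq\iota'(\mathfrak{O})$ then $|\Delta_\mathfrak{O}|^2\geq p^2$, contradicting $|\Delta_\mathfrak{O}|<p$. Hence $\iota(\mathfrak{O})=\iota'(\mathfrak{O})$, so $\iota^{-1}\circ\iota'$ is a ring automorphism of $\mathfrak{O}$. Since $\mathfrak{O}$ is an imaginary quadratic order, its only ring automorphisms are the identity and complex conjugation, so $\iota'\in\{\iota,\overline{\iota}\}$. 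These two orientations are genuinely distinct: for any generator $\omega$ of $\mathfrak{O}$ over $\mathbb{Z}$, one has $\omega-\overline{\omega}\neq 0$, and the injectivity of $\iota$ then gives $\iota(\omega)\neq\iota(\overline{\omega})$.

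For part (ii), the push-forward $\iota':=\psi_*\iota$ of a primitive $\mathfrak{O}$-orientation along a descending $\ell$-isogeny is a primitive orientation by $\mathbb{Z}+\ell\mathfrak{O}$ on $E'$ (this is exactly what characterises the ``descending'' direction in the volcano of Proposition \ref{prop: volcano structure}); note its discriminant is $\ell^2\Delta_\mathfrak{O}$. Suppose for contradiction that $E'$ admits a primitive $\mathfrak{O}$-orientation $\iota''$. Since $\ell^2\Delta_\mathfrak{O}\neq\Delta_\mathfrak{O}$, the two images $\iota'(\mathbb{Z}+\ell\mathfrak{O})$ and $\iota''(\mathfrak{O})$ must be distinct (they have different $\mathbb{Z}$-ranks of a given index), so Theorem \ref{lemma: disc ineq} applied in $\End(E')$ yields $|\ell^2\Delta_\mathfrak{O}|\cdot|\Delta_\mathfrak{O}|\geq p^2$, i.e.\ $\ell|\Delta_\mathfrak{O}|\geq p$, contradicting the hypothesis $|\Delta_\mathfrak{O}|\ell<p$.

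The only subtlety I anticipate is the claim that the push-forward under a descending $\ell$-isogeny is primitive precisely at $\mathbb{Z}+\ell\mathfrak{O}$ (rather than at some still smaller suborder, which would change the discriminant bound). This is the standard structural fact underlying the volcano picture recalled in Proposition \ref{prop: volcano structure}, so invoking it there is legitimate; if one wanted a self-contained argument, the primitivity of $\psi_*\iota$ at $\mathbb{Z}+\ell\mathfrak{O}$ can be checked directly from the formula $\psi_*\iota(\alpha)=\frac{1}{\ell}\psi\circ\iota(\alpha)\circ\widehat{\psi}$ together with the fact that $\ker\psi$ is not $\iota(\mathfrak{O})$-stable for a descending $\psi$.
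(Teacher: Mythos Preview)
Your proof is correct and follows essentially the same route as the paper: for (i) apply Kaneko's bound to force $\iota(\mathfrak{O})=\iota'(\mathfrak{O})$ and then use that $\Aut(\mathfrak{O})=\{\mathrm{id},\overline{\,\cdot\,}\}$; for (ii) use that a descending $\ell$-isogeny makes $(E',\iota')$ primitively $(\Z+\ell\mathfrak{O})$-oriented, then apply Kaneko's bound to the pair $\iota',\iota''$ to get $\ell^2\Delta_\mathfrak{O}^2\geq p^2$. Your added remarks (that $\iota\neq\overline{\iota}$, and the justification that $\iota'(\Z+\ell\mathfrak{O})\neq\iota''(\mathfrak{O})$ via discriminants) only make explicit what the paper leaves implicit; the parenthetical about ``different $\mathbb{Z}$-ranks'' is a slip in wording---the images have the same rank but different discriminants---though the conclusion is unaffected.
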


\begin{proof}
    \textbf{(i)} Let $\iota':\mathfrak{O}\hookrightarrow\End(E)$ be another $\mathfrak{O}$-orientation of $E$. Since $|\Delta_\mathfrak{O}|<p$, we must have $\iota'(\mathfrak{O})=\iota(\mathfrak{O})$ by Lemma \ref{lemma: disc ineq}. Hence, $\iota'^{-1}\circ\iota$ is an automorphism of $\mathfrak{O}$, so it is either the identity or the complex conjugation. The result follows.

    \textbf{(ii)} Suppose $E'$ is $\mathfrak{O}$ orientable and let $(E',\iota'')$ be an $\mathfrak{O}$-orientation. Let $\mathfrak{O}':=\Z+\ell\mathfrak{O}$. Then $\psi:(E,\iota)\longrightarrow (E',\iota')$ being descending, $(E',\iota')$ is an $\mathfrak{O}'$-orientation and $\iota'(\mathfrak{O}')\neq\iota''(\mathfrak{O})$, so that $\Delta_{\mathfrak{O}'}\Delta_\mathfrak{O}\geq p^2$ by Lemma \ref{lemma: disc ineq}. But $\Delta_{\mathfrak{O}'}\Delta_\mathfrak{O}=\ell^2\Delta_\mathfrak{O}^2<p^2$ by hypothesis. Contradiction.
\end{proof}

\begin{remark}
    Corollary~\ref{cor: unique orientation} holds for any imaginary quadratic order $\mathfrak{O}$, not only the special form we consider in this section.
\end{remark}

Assuming $p>|\Delta_{\mathfrak{O}}|\max_{1\leq i\leq r}\ell_i$, the orientation $\iota:\mathfrak{O}\hookrightarrow\End(E)$ is unique up to conjugation, the horizontal $\ell_1$-isogeny $\varphi_1:E_0\longrightarrow E_1$ given by the action of $\mathfrak{l}_1$ is uniquely determined, and it is the only $\ell_1$-isogeny with $\mathfrak{O}$-oriented codomain. In this case, $\varphi_1$ can be distinguished from other $\ell_1$-isogenies by an oracle query. 
Similarly for each further $i \in \{2,...,r\}$, the isogeny $\varphi_i:E_{i-1}\longrightarrow E_i$ given by the action of $[\mathfrak{l}_i]$ on $(E_{i-1},(\varphi_{i-1}\circ\cdots\circ\varphi_1)_*(\iota))$ by computing each of the $\ell_i + 1$ isogenies and querying the oracle to find the one whose codomain $E_i$ is orientable by $\mathfrak{O}$. In particular, the isogeny $\varphi_r:E_{r-1}\longrightarrow E_r$ corresponding to the action of $\mathfrak{l}_r$ on $(E_{r-1},(\varphi_{r-1}\circ\cdots\circ\varphi_1)_*(\iota))$ will have codomain $E_r \cong E_0$. Indeed, 
\[(E_{r},(\varphi_{r}\circ\cdots\circ\varphi_1)_*(\iota))=[\mathfrak{l}_1\cdots\mathfrak{l}_r]\cdot (E_0,\iota)=[\alpha\mathfrak{O}]\cdot (E_0,\iota)\cong (E_0,\iota)\]
Possibly post-composing with this isomorphism, we have an endomorphism $\varphi=\varphi_r\circ\cdots\circ\varphi_1\in \End(E)$ associated to the action of the ideal $\prod_{i=1}^r\mathfrak{l}_i=\alpha\mathfrak{O}$. It follows that $\varphi=\tau\circ\iota(\alpha)$ for some automorphism $\tau\in\Aut(E)$. We may post-compose $\varphi$ by $\tau\in\Aut(E)$ until the result has trace zero, as $\alpha$. The trace can be computed in polynomially many isogeny evaluations using Schoof's algorithm \cite[Section 5]{Schoof_1995}. 

\begin{remark}[Isomorphisms]
    Assuming we are working with elliptic curves in Weierstrass form, all isomorphism formulae are known. To find an isomorphism $\beta : E_r \longrightarrow E_0$, we check the codomain formula for each isomorphism from $E_r$ until $E_0$ is found.

    There additional automorphisms in the two special cases of $j=1728$ and $j=0$ \cite[Figure 3.1, Section 6]{ACLSST2022_orientations}. At each step $\varphi_i:E_{i-1}\to E_i$ where $j(E_i) = 0$ or $1728$, we must decide whether or not to post-compose with these automorphisms. The automorphisms $[\pm 1]$ will not affect the resulting trace, but we must check one nontrivial automorphism for $j = 1728$ and two for $j = 0$. This can be done after the algorithm is completed, as the oracle calls will remain unaffected.  

    The additional running time of choosing isomorphisms can be bounded by a constant, so does not contribute to the overall complexity.
\end{remark}

\begin{example}
    Let $p = 83$ and $\mathfrak{O} = \mathbb{Z}[\sqrt{-21}]$, the ring of integers of $K = \mathbb{Q}(\sqrt{-21})$. We see $p$ is inert in $\mathfrak{O}$ so $K$ embeds into the quaternion algebra $\End^0(E)$ for any supersingular $E$ over $\overline{\mathbb{F}_p}$. Now, let $E/\mathbb{F}_p^2$ be the $\mathfrak{O}$-oriented curve $y^2 = x^3 + x$, we find the orientation by finding an endomorphism $\omega$ with $N(\omega) = 21 = 3 \cdot 7$ and $Tr(\omega) = 0$. From $E$ we pick a $3$-isogeny to $y^2 = x^3 + 32x + 38\sqrt{-1}$, this is also $\mathfrak{O}$-oriented. Then we pick a horizontal $7$-isogeny which has codomain $y^2 = x^3 + 26x$. This curve is isomorphic to $E$. By composing maps we get $\omega : E \longrightarrow E$. Finally we notice $\omega \neq - \tilde{\omega}$ so the endomorphism has a non-zero trace. But by post-composing with an automorphism $\iota$ on $E$, we get a trace-zero endomorphism of degree $21$.
\end{example}

If $\alpha$ is a generator of $\mathfrak{O}$ ($d\not\equiv -1 \mod 4$), then $\varphi$ determines an $\mathfrak{O}$-orientation and we are done. Otherwise, $\Z[\alpha]$ has index $2$ in $\mathfrak{O}$ ($d\equiv -1 \mod 4$), and $\omega=(1+\alpha)/2$ generates $\mathfrak{O}$. Then $\varphi$ determines an imprimitive $\Z[\alpha]$-orientation of $E$. This orientation cannot be primitive, otherwise, we would have $\Delta_\mathfrak{O}\Delta_{\Z[\alpha]}\geq p^2$ i.e. $4\Delta_\mathfrak{O}^2\geq p^2$, which is a contradiction since we assumed that $p>|\Delta_{\mathfrak{O}}|\max_{1\leq i\leq r}\ell_i\geq 2|\Delta_{\mathfrak{O}}|$. It follows that $(\varphi+1)/2$ is well defined and induces an $\mathfrak{O}$-orientation on $E$: $\omega=(\alpha+1)/2\longmapsto (\varphi+1)/2$. 

\begin{remark}[Efficient representation]
    Knowing how to evaluate $\varphi$ (as the composition $\varphi_r\circ\cdots\circ\varphi_1$), we efficiently evaluate $(\varphi+1)/2$ as follows: if $P\in E(\F_{p^k})$, we find $P'\in E(\F_{p^{2k}})$ such that $[2]P'=P$ and compute $\varphi(P')+P'$. Assuming the $\ell_i$ are polynomial in $\log(d)$, the list of isogenies $(\varphi_r, \cdots, \varphi_1)$ defines an efficient representation of both $\varphi$ and $(\varphi+1)/2$.
\end{remark}

We summarize all the steps to determine an $\mathfrak{O}$-orientation in Algorithm \ref{algo: special reduction search to decision}. 

\begin{algorithm}
\SetAlgoLined
\KwData{A supersingular elliptic curve $E_0/\mathbb{F}_{p^2}$; the maximal order $\mathfrak{O}$ of $\Q(\sqrt{-d})$, where $d:=\prod_{i=1}^r\ell_i$ is a product of small distinct primes, where $p>|\Delta_{\mathfrak{O}}|\max_{1\leq i\leq r}\ell_i$; an oracle $\IsOrientable{\mathfrak{O}}$ for the Decision $\mathfrak{O}$-Orienting Problem \ref{prob:OOrientingDecision}.}
\KwResult{If $E_0$ is $\mathfrak{O}$-orientable, an efficient representation (as defined in \ref{def: efficient rep}) of an endomorphism $\varphi_0\in\End(E_0)$ defining an $\mathfrak{O}$-orientation of~$E_0$.}

\If{\textbf{not} \IsOrientable{\mathfrak{O}}($E_0$)}{
Return ``$E_0$ is not $\mathfrak{O}$-orientable"\;
}
Endo $\longleftarrow []$\;
\For{$i=1$  \KwTo $r$}{
Compute the set $\{E_{{i-1},k}\}_{k=1}^{\ell_i+1}$ of codomains of the $(\ell_i + 1)$ degree-$\ell_i$ isogenies from $E_{i-1}$\;\label{alg:special_disc:isog_list}

Looking $\longleftarrow$ True\;
$k \longleftarrow 1$\;

\While{Looking \textbf{and} $k\leq (\ell_i+1)$}{
\If{\IsOrientable{\mathfrak{O}(E_{{i-1},k})}}{
$E_i \longleftarrow E_{{i-1},k}$\;
Compute the degree-$\ell_i$ isogeny $\varphi_{i}:E_{i-1} \longrightarrow E_i$\;
Append $\varphi_{i}$ to Endo\;\label{alg:special_disc:isog_find}
Looking $\longleftarrow$ False\;
}
$k\longleftarrow k+1$\;
}
Test all isomorphisms $\beta: E_r\longrightarrow E_0$ until $\beta\circ\varphi_r\circ\cdots\circ\varphi_1$ has trace zero\;\label{line: trace}
Replace $\varphi_r$ by $\beta\circ\varphi_r$ in Endo\;
}
Return Endo\;

\caption{Algorithm to solve the $\mathfrak{O}$-Orienting Problem \ref{prob:OOrienting} with an oracle for the Decision $\mathfrak{O}$-Orienting Problem \ref{prob:OOrientingDecision}, special discriminant.}\label{algo: special reduction search to decision}
\end{algorithm}

\begin{theorem}\label{thm: special_red}
    Let $d:=\prod_{i=1}^r \ell_i$ be a product of small distinct primes, $\mathfrak{O}$ be the maximal order of $\Q(\sqrt{-d})$ and $p>|\Delta_\mathfrak{O}|\max_{1\leq i\leq r}\ell_i$. Then, over $\F_{p^2}$, Algorithm \ref{algo: special reduction search to decision} reduces the $\mathfrak{O}$-Orienting Problem (Problem \ref{prob:OOrienting}) to the Decision $\mathfrak{O}$-Orienting Problem (Problem \ref{prob:OOrientingDecision}) in polynomial time in $\log(p)$ and $\max_{1\leq i\leq r}\ell_i$.
\end{theorem}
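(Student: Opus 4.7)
The plan is to verify correctness and complexity of Algorithm~\ref{algo: special reduction search to decision} by following the informal narrative of Section~\ref{sec: smooth disc} and then tallying the costs step by step. The skeleton is: (1)~the main loop builds, one prime at a time, the horizontal $\ell_i$-isogeny chain realizing the action of $\mathfrak{l}_1\cdots\mathfrak{l}_r=\alpha\mathfrak{O}$; (2)~the resulting endomorphism, possibly twisted by an automorphism, is $\iota(\alpha)$; and (3)~from $\iota(\alpha)$ we recover a primitive $\mathfrak{O}$-orientation, directly if $d\not\equiv -1\pmod 4$ and after halving otherwise.

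For correctness of the loop, I would observe that because each $\ell_i$ divides $\Delta_\mathfrak{O}$ it is ramified in $\mathfrak{O}$, so $(\Delta_\mathfrak{O}/\ell_i)=0$ and Proposition~\ref{prop: volcano structure}(i) gives exactly one horizontal $\ell_i$-isogeny and $\ell_i$ descending ones from every $\mathfrak{O}$-oriented curve on the crater. The hypothesis $p>|\Delta_\mathfrak{O}|\max_i\ell_i$ together with Corollary~\ref{cor: unique orientation}(ii) rules out the descending case, since the codomain of a descending $\ell_i$-isogeny is not $\mathfrak{O}$-orientable. Thus among the $\ell_i+1$ neighbors enumerated on line~\ref{alg:special_disc:isog_list}, the oracle \IsOrientable{\mathfrak{O}} singles out precisely the unique horizontal neighbor, namely the codomain $E_i$ of the isogeny $\varphi_i$ given by the action of $[\mathfrak{l}_i]$ on $(E_{i-1},(\varphi_{i-1}\circ\cdots\circ\varphi_1)_*\iota)$ (Lemma~\ref{lemma: horizontal decomp}). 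A subtle point here is that the orientation inherited on $E_{i-1}$ is only defined up to conjugation; but by Corollary~\ref{cor: unique orientation}(i) this ambiguity exchanges $\mathfrak{l}_i$ with $\overline{\mathfrak{l}_i}=\mathfrak{l}_i$ (the prime is ramified), so the target $E_i$ is uniquely determined.

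After the loop, $\prod_i\mathfrak{l}_i=\alpha\mathfrak{O}$ is principal, its class acts trivially, and hence $E_r\cong E_0$. Fixing any such isomorphism, the composition $\beta\circ\varphi_r\circ\cdots\circ\varphi_1\in\End(E_0)$ differs from $\iota(\alpha)$ only by an element of $\Aut(E_0)$; since $\iota(\alpha)$ has trace zero, the search on line~\ref{line: trace} over the finite set $\Aut(E_0)\times\{\text{isomorphisms }E_r\to E_0\}$ terminates on a choice giving $\iota(\alpha)$ exactly. The trace of a candidate endomorphism is computed in polynomial time à la Schoof by evaluating it on small torsion, using the chain $(\varphi_1,\ldots,\varphi_r)$ as an efficient representation. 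If $d\not\equiv -1\pmod 4$, $\alpha$ generates $\mathfrak{O}$ and we are done; otherwise $(1+\alpha)/2$ generates $\mathfrak{O}$, and the halving trick from the remark preceding the algorithm provides an efficient representation of $(1+\iota(\alpha))/2$, which is well-defined because the hypothesis $p>|\Delta_\mathfrak{O}|\max_i\ell_i\geq 2|\Delta_\mathfrak{O}|$ together with Corollary~\ref{cor: unique orientation} forbids $\mathbb{Z}[\alpha]$ from being primitively embedded.

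For complexity, each iteration of the loop costs one computation of the set of $\ell_i+1$ neighbors via modular polynomials in $\tilde O(\ell_i^2\log p)$ operations over $\mathbb{F}_{p^2}$ (Section~\ref{sec: j-invariants}), up to $\ell_i+1$ oracle calls, and one normalized $\ell_i$-isogeny computation in $\tilde O(\ell_i^2\log p)$ operations (Section~\ref{sec: isogenies from j-invariants}); with $r=O(\log d)$ iterations, the total is polynomial in $\log p$ and $\max_i\ell_i$. The final trace test and the optional halving each contribute only polynomial overhead. The main obstacle is the uniqueness step: one must simultaneously exclude descending isogenies with $\mathfrak{O}$-orientable codomain and horizontal isogenies producing $\mathfrak{O}$-orientable codomains coming from a \emph{different} primitive orientation (the phenomenon illustrated in Example~\ref{ex: non unique orientation}); both pathologies are ruled out precisely by the discriminant inequality of Corollary~\ref{cor: unique orientation} under the hypothesis $p>|\Delta_\mathfrak{O}|\max_i\ell_i$.
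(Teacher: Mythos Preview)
Your proposal is correct and follows essentially the same approach as the paper: the paper's own proof simply refers back to the narrative of Section~\ref{sec: smooth disc} for correctness and then tallies the per-iteration costs exactly as you do (neighbor enumeration via modular polynomials, oracle calls, one $\ell_i$-isogeny, and a final Schoof-style trace computation over $O(1)$ isomorphisms). Your write-up is in fact more explicit than the paper's, in particular the remark that the conjugation ambiguity on $E_{i-1}$ is harmless because $\overline{\mathfrak{l}_i}=\mathfrak{l}_i$ in the ramified case.
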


\begin{proof}
 We justified above that this algorithm terminates and is correct. For all $i\in\{1,...,r\}$, this algorithm computes the $\ell_i+1$ curves which are $\ell_i$-isogenous to $E_{i-1}$, which costs $\tilde{O}(\ell_i^2\log(p))$ operations over $\F_{p^2}$ by Section \ref{sec: j-invariants}. It calls the oracle $\IsOrientable{\mathfrak{O}}$ $\ell_i+1$ times and computes one $\ell_i$-isogeny between $j(E_{i-1})$ and $j(E_{i})$, which costs on average $\tilde{O}(\ell_i^2\log(p))$ operations over $\F_{p^2}$ by Section \ref{sec: isogenies from j-invariants}. The number of isomorphisms $\beta:E_r\longrightarrow E_0$ is $O(1)$. Using \cite[Section~5]{Schoof_1995}, we compute the trace of $\beta\circ\varphi_r\circ\cdots\circ\varphi_1$ on line \ref{line: trace} of Algorithm \ref{algo: special reduction search to decision} in polynomial time in $\log(p)$, $r=O(\log(p))$ and $\max_{1\leq i\leq r}\ell_i$. The total cost is polynomial in $\log(p)$ and $\max_{1\leq i\leq r}\ell_i$.
\end{proof}

\section{Solving the $\mathfrak{O}$-orienting problem with a decision oracle}\label{sec: decision prob with oracle}

\subsection{Description of the algorithms}

Let $\mathfrak{O}$ be an imaginary quadratic order with general discriminant $\Delta_\mathfrak{O}$. Given access to an oracle $\IsOrientable{\mathfrak{O}}$ for the Decision $\mathfrak{O}$-Orienting Problem \ref{prob:OOrientingDecision}, we solve the $\mathfrak{O}$-Orienting Problem \ref{prob:OOrienting} finding a an $\mathfrak{O}$-orientation $\iota : \mathfrak{O}\hookrightarrow\End(E)$ of any given supersingular elliptic curve $E/\F_{p^2}$ if it exists. 

The idea is similar the case of special discriminant considered in Section \ref{sec: smooth disc}. We compute an endomorphism corresponding to a generator  of $\mathfrak{O}$ as a chain of horizontal isogenies of small degrees. However, two difficulties arise. First, the canonical generator $\omega:=(s+\sqrt{\Delta_\mathfrak{O}})/2$ with $s:=\Delta_\mathfrak{O} \mod 2$ of $\mathfrak{O}$ is not smooth in general. We have to find another smooth generator $\theta$ of~$\mathfrak{O}$. 
Second, if we denote $\varphi:=\iota(\theta)$ and decompose $\varphi:=\varphi_r\circ\cdots\circ\varphi_1$ as a product of horizontal isogenies of degrees $\ell_i, \cdots, \ell_r$ respectively, we may not be able to find the $\varphi_i$ simply by using the oracle $\IsOrientable{\mathfrak{O}}$ as in Section \ref{sec: smooth disc}. We are no longer guaranteed that $\ell_i \mid \Delta_\mathfrak{O}$, so there may be $1+(\Delta_\mathfrak{O}/\ell_i)=2$ horizontal isogenies of degree $\ell_i$ from a $\mathfrak{O}$-oriented elliptic curve. To search for $\varphi$, starting at root $E$ we fill a binary tree whose nodes are $\mathfrak{O}$-oriented elliptic curves and edges are horizontal isogenies. We call such a tree an $\mathfrak{O}$-oriented $(\ell_1,\cdots, \ell_r)$-isogeny tree, see Definition~\ref{def: isogeny_tree}. The endomorphism $\varphi$ is a branch of this tree with leaf $E$. 

\begin{definition}\label{def: isogeny_tree}
    An $\mathfrak{O}$-oriented $(\ell_1,\cdots, \ell_r)$-isogeny tree is a binary tree of height $r$ whose nodes are (primitively) $\mathfrak{O}$-oriented supersingular elliptic curves and such that every node $E_{i-1}$ of depth $i\in\{1,\cdots,r\}$ has children that are horizontally $\ell_{i}$-isogenous to $E_{i-1}$.
\end{definition}

To optimize the tree search, we propose a meet-in-the middle strategy where two half-depth such trees are computed starting at $E$ instead of a single one:

\begin{enumerate}
    \item Find a generator $\theta$ of $\mathfrak{O}$ of $B$-smooth norm $N(\theta):=\prod_{i=1}^r\ell_i$.
    \item Starting at $E$, compute $\mathfrak{O}$-oriented $(\ell_1,\cdots, \ell_{s})$-isogeny tree $\mathcal{T}_1$ and an $\mathfrak{O}$-oriented $(\ell_{s+1},\cdots, \ell_{r})$-isogeny tree $\mathcal{T}_2$ (with $s\simeq r/2$).
    \item Find a matching leaf in $\mathcal{T}_1$ and $\mathcal{T}_2$.
    \item Extract the corresponding endomorphism $\varphi\in\End(E)$.
    \item Infer from $\varphi=\iota(\theta)$ an efficient representation of the canonical generator $\varphi_0:=\iota(\omega)$ (in the sense of Definition \ref{def: efficient rep}).
\end{enumerate}

We explain each step in detail in the following paragraphs.

\subsubsection{Finding a smooth norm generator}\label{sec: smooth norm search}

Let $\mathfrak{O}$ be an imaginary quadratic order and $\omega$ be a generator. We want to find another generator $\theta$ of $\mathfrak{O}$ with smooth norm $N(\theta)=\prod_{i=1}^r\ell_i$. The computation of $\varphi=\varphi_r\circ\cdots\circ\varphi_1$ associated to $\theta$ is exponential in the $\ell_i$ and $r$, so we require the $\ell_i$ and $2^r$ to be subexponential in $\log(|\Delta_\mathfrak{O}|)$. For technical reasons (see Lemma \ref{lemma: horizontal decomp general}), $N(\theta)$ should also be non-square and coprime to $\Delta_\mathfrak{O}$. In summary, we look for a generator $\theta$ of ns-$(B,r_m,\Delta_\mathfrak{O})$-smooth norm, in the sense of Definition~\ref{def:nsBsmooth}, with $B$ and $2^{r_m}$ subexponential in $\log(|\Delta_\mathfrak{O}|)$. 

\begin{definition}\label{def:nsBsmooth}
    An integer $N\in\mathbb{N}$ is \emph{$(B,r_m,d)$-smooth} when its decomposition into prime factors $N=\prod_{i=1}^r\ell_i$ satisfies $r\leq r_{m}$, $\ell_i\leq B$, and $\ell_i\nmid d$ for all $i\in\{1,\cdots, r\}$. We say that $N$ is \emph{ns-$(B,r_m,d)$-smooth} when it is $(B,r_m,d)$-smooth and not a square.
\end{definition}

We look for $\theta$ of the form $\theta:=a+\omega$ with $a\in\Z$ to be determined. There is no better known method to find $a$ than sampling $a$ randomly and testing whether $N(a+\omega)$ is ns-$(B,r_m,\Delta_\mathfrak{O})$-smooth. To make sure $N(a+\omega)$ is close to $N(\omega)$, we sample $a\in \{-\lfloor\sqrt{N(\omega)}\rfloor, \cdots, \lfloor\sqrt{N(\omega)}\rfloor\}$. We have $N(\omega)=(|\Delta_{\mathfrak{O}}|+t^2)/4$ with $t:=\Tr(\omega)\in\{0,1\}$. It follows that:
\[\cfrac{|\Delta_{\mathfrak{O}}|}{4}=N(-t/2+\omega)\leq N(a+\omega)\leq N(-\sqrt{N(\omega)}+\omega)\leq |\Delta_{\mathfrak{O}}|\]
Since $B$ is subexponential in $\log(|\Delta_\mathfrak{O}|)$, the optimal known way to test the $B$-smoothness of $N(a+\omega)$ is the method introduced in Section \ref{sec: ECM} using ECM with time complexity $L_B(1/2,\sqrt{2})$. Algorithm \ref{algo: smooth norm} presenting the search for $\theta=a+\omega$ follows.

\begin{algorithm}
\SetAlgoLined
\KwData{The discriminant $\Delta_{\mathfrak{O}}$ of an imaginary quadratic order $\mathfrak{O}$ and smoothness parameters $B>0$ and $r_m\in\mathbb{Z}_{>0}$.}
\KwResult{A generator $\theta$ of $\mathfrak{O}$ having ns-$(B,r_m,\Delta_\mathfrak{O})$-smooth norm $N(\theta)$ and its prime factors $\ell_1,\cdots,\ell_r$ (with multiplicity).}
$s\longleftarrow \Delta_{\mathfrak{O}} \mod 2$\;
$\omega\longleftarrow(s+\sqrt{\Delta_{\mathfrak{O}}})/2$\;
\Repeat{$r\leq r_m$}{
\Repeat{$R\neq\bot$}{
\Repeat{$N(a+\omega)\wedge\Delta_\mathfrak{O}=1$ and $\sqrt{N(a+\omega)}\not\in\Z$}{
Sample $a\overset{\$}{\longleftarrow}\{-\lfloor\sqrt{N(\omega)}\rfloor,\cdots,\lfloor\sqrt{N(\omega)}\rfloor\}$\;
}
$R\longleftarrow\SmoothFact{}(N(a+\omega),B)$ (Algorithm \ref{algo: SmoothFact})\;
}
$\ell_1,\cdots,\ell_r\longleftarrow R$\;
}
Return $a+\omega$ and $\ell_1,\cdots,\ell_r$\;

\caption{$\FindSmoothGen{}$ finding a smooth generator of an imaginary quadratic order~$\mathfrak{O}$.}\label{algo: smooth norm}
\end{algorithm}

\subsubsection{Filling the $\mathfrak{O}$-oriented isogeny trees}

Let $E/\F_{p^2}$ be an $\mathfrak{O}$-orientable elliptic curve and splitting primes $\ell_1,\dots,\ell_s\leq B$. We explain here how to fill $\mathcal{T}$, the $\mathfrak{O}$-oriented $(\ell_1,\dots,\ell_s)$-isogeny tree starting at $E$.

We assume $p>B|\Delta_\mathfrak{O}|$ so any $\mathfrak{O}$-orientable curve admits a unique $\mathfrak{O}$-orientation up to conjugation by Corollary~\ref{cor: unique orientation}(i). Hence, every node of $\mathcal{T}$ can be represented by $j$-invariant (the root $E_0:=E$ included). 
If $E_{i-1}$ is a node of depth $i\in\{1,\cdots,s\}$ of $\mathcal{T}$, its children $E_{i,1}$ and $E_{i,2}$ are the only two $\mathfrak{O}$-orientable curves that are $\ell_i$-isogenous to $E_i$, given by the action of ideals $\mathfrak{l}_i,$ $\overline{\mathfrak{l}_i}$ above $\ell_i$. As in Section \ref{sec: smooth disc}, to find $E_{i,1}$ and $E_{i,2}$ we compute the codomain $j$-invariants of all degree-$\ell_i$ isogenies $E_i\longrightarrow E'$ and apply the decision oracle to see which are $\mathfrak{O}$-orientable.
Determining such $j$-invariants can be done using modular polynomials in $\tilde{O}(\ell_i^2\log(p))$ operations over $\F_{p^2}$, as in Section \ref{sec: j-invariants}. The tree filling algorithm \TreeFill{} (Algorithm \ref{algo: tree filling algorithm}) follows.

\begin{algorithm}
\SetAlgoLined
\KwData{\justifying An imaginary quadratic order $\mathfrak{O}$ such that $|\Delta_{\mathfrak{O}}|<p/B$, an $\mathfrak{O}$-orientable curve $E/\mathbb{F}_{p^2}$, splitting primes $\ell_1,\cdots,\ell_s\leq B$ and an oracle \IsOrientable{\mathfrak{O}} for the Decision $\mathfrak{O}$-Orienting Problem \ref{prob:OOrientingDecision}.}
\KwResult{\justifying The $\mathfrak{O}$-oriented $(\ell_1,\cdots,\ell_s)$-isogeny tree $\mathcal{T}$ starting at $E$.}

Initialize $\mathcal{T}$ at $E_0:=E$\;

\For{$i=1$ \KwTo $s$}{
\For{$j(E_{i-1})\in\Leaves(\mathcal{T})$}{
Compute $\Phi_{\ell_i}(j(E_{i-1}),Y)$\;
Find $S_i\subset\F_{p^2}$, the set of roots of $\Phi_{\ell_i}(j(E_{i-1}),Y)$\;
\For{$j(E_i)\in S_i$}{
\If{$\IsOrientable{\mathfrak{O}}(j(E_i))$}{
Append $j(E_i)$ as a child of $j(E_{i-1})$ in $\mathcal{T}$\;}
}
}
}

Return $\mathcal{T}$\;

\caption{\TreeFill{}, the $\mathfrak{O}$-oriented isogeny tree filling algorithm.}\label{algo: tree filling algorithm}
\end{algorithm}

\subsubsection{From a tree match to a generating endomorphism}

Assume we have found $\theta$, a generator of $\mathfrak{O}$ with ns-$(B,r_m,\Delta_\mathfrak{O})$-smooth norm $N(\theta)=\prod_{i=1}^r\ell_i$. Let {$\iota : \mathfrak{O}\hookrightarrow\End(E)$} denote the orientation with $\varphi:=\iota(\theta)$. Then, we may decompose $\varphi:=\varphi_r\circ\cdots\circ\varphi_1$, where $\varphi_i$ is an $\ell_i$-isogeny for all $i\in\{1,\cdots,r\}$. 

\begin{lemma}\label{lemma: horizontal decomp general}
    Assuming $N(\theta)=\deg(\varphi)$ coprime with $\Delta_\mathfrak{O}$, all the isogenies $\varphi_i$ in the decomposition of $\varphi$ are horizontal.
\end{lemma}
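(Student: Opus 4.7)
The plan is to translate the hypothesis on $N(\theta)$ into the ideal arithmetic of $\mathfrak{O}$, then invoke the volcano structure of Proposition~\ref{prop: volcano structure} on each step of the chain. The argument proceeds in three stages: (1) show each $\ell_i$ splits in $\mathfrak{O}$, (2) factor $\mathfrak{O}\theta$ as a product of prime ideals of norm $\ell_i$, and (3) read off the horizontality from this factorization.

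For stage (1), since $\ell_i$ divides $N(\theta)$, which is coprime with $\Delta_\mathfrak{O}$, we have $\ell_i \nmid \Delta_\mathfrak{O}$. In particular, $\ell_i$ does not divide the conductor of $\mathfrak{O}$ and is not ramified in $K$, so $\ell_i$ is either split or inert in $\mathfrak{O}$. Suppose towards contradiction that $\ell_i$ is inert, i.e.\ $\ell_i\mathfrak{O}$ is a prime ideal. From $\theta\bar\theta=N(\theta)\in\ell_i\mathfrak{O}$ we deduce $\theta\in\ell_i\mathfrak{O}$, say $\theta=\ell_i\theta'$ with $\theta'\in\mathfrak{O}$. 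Writing $\theta'=a'+b'\omega$ in the $\mathbb{Z}$-basis $(1,\omega)$ of $\mathfrak{O}$, we get $\mathbb{Z}[\theta]\subseteq \mathbb{Z}+\ell_i\mathbb{Z}\omega$, a subgroup of index $\ell_i$ in $\mathfrak{O}$. This contradicts the fact that $\theta=a+\omega$ generates $\mathfrak{O}$, so $\ell_i$ must split: $\ell_i\mathfrak{O}=\mathfrak{l}_i\bar{\mathfrak{l}}_i$ with $N(\mathfrak{l}_i)=\ell_i$.

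For stage (2), note that every prime above an $\ell_i$ is coprime with the conductor, so the standard unique factorization of invertible ideals applies. Write
\[ \mathfrak{O}\theta=\prod_i \mathfrak{l}_i^{a_i}\bar{\mathfrak{l}}_i^{b_i}. \]
Multiplying by the conjugate factorization and using $\mathfrak{O}\theta\cdot\mathfrak{O}\bar\theta=\mathfrak{O}N(\theta)=\prod_i(\mathfrak{l}_i\bar{\mathfrak{l}}_i)^{v_{\ell_i}(N(\theta))}$ yields $a_i+b_i=v_{\ell_i}(N(\theta))$. If both $a_i,b_i>0$, then $\mathfrak{l}_i\bar{\mathfrak{l}}_i=\ell_i\mathfrak{O}$ divides $\mathfrak{O}\theta$ (the two primes are coprime), so again $\theta\in\ell_i\mathfrak{O}$, contradicting that $\theta$ generates $\mathfrak{O}$. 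Hence for each $i$ exactly one of $a_i,b_i$ vanishes, and the total multiplicity at $\ell_i$ is $v_{\ell_i}(N(\theta))$. Writing the primes in any order consistent with the chain, we obtain $\mathfrak{O}\theta=\mathfrak{p}_1\cdots\mathfrak{p}_r$ with $\mathfrak{p}_i\in\{\mathfrak{l}_i,\bar{\mathfrak{l}}_i\}$ and $N(\mathfrak{p}_i)=\ell_i$.

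For stage (3), the decomposition $\varphi=\iota(\theta)=\varphi_r\circ\cdots\circ\varphi_1$ is the one induced by the partial products of the $\mathfrak{p}_i$ acting on $(E,\iota)$ via the class group action recalled in Section~\ref{sec: orientations}. Since each $\mathfrak{p}_i$ has norm $\ell_i$ and lies above a prime $\ell_i$ coprime to the conductor of $\mathfrak{O}$, Proposition~\ref{prop: volcano structure}(i) ensures that the associated $\ell_i$-isogeny $\varphi_i$ is horizontal (it is one of the $(\Delta_\mathfrak{O}/\ell_i)+1=2$ horizontal $\ell_i$-isogenies from the current $\mathfrak{O}$-oriented curve). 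The potential obstacle is the one handled in stage (2): ensuring that no $\ell_i$ is inert and that no pair $\{\mathfrak{l}_i,\bar{\mathfrak{l}}_i\}$ both appears in $\mathfrak{O}\theta$; both are ruled out by the primitivity of $\theta$ as a generator of $\mathfrak{O}$ combined with $\gcd(N(\theta),\Delta_\mathfrak{O})=1$.
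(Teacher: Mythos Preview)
Your argument is correct and takes a genuinely different route from the paper. The paper argues at the level of the volcano: since $\varphi=\iota(\theta)$ is horizontal as an endomorphism of $(E,\iota)$, any descending step must be matched by an ascending step at the same prime, and the pair descending/ascending forces $\varphi$ to factor through $[\ell_i]$, contradicting primitivity of the orientation. You instead work in the arithmetic of $\mathfrak{O}$: you factor $\theta\mathfrak{O}=\mathfrak{p}_1\cdots\mathfrak{p}_r$ with each $\mathfrak{p}_i$ a split prime of norm $\ell_i$, using that $\theta$ generates $\mathfrak{O}$ to rule out inert primes and mixed pairs $\mathfrak{l}_i\bar{\mathfrak{l}}_i$, and then identify the isogeny chain with the chain of ideal actions. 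Your approach has the advantage of also making explicit that every $\ell_i$ is split (which the paper uses later in the tree search), while the paper's proof is shorter and avoids invoking unique factorization of ideals.

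There is one small gap you should close in stage (3). You assert that the given decomposition $\varphi=\varphi_r\circ\cdots\circ\varphi_1$ ``is the one induced by the partial products of the $\mathfrak{p}_i$'', but the decomposition in the statement is \emph{a priori} an arbitrary factorization into $\ell_i$-isogenies, not one constructed from the ideals. To identify them you need that $\ker\varphi$ is cyclic, so that for each $i$ there is a \emph{unique} order-$\ell_i$ subgroup at the relevant step, forcing $\ker\varphi_1=E[\mathfrak{p}_1]$ and so on by induction. Cyclicity does follow from your stages (1)--(2): you have shown $\ell_i\nmid\theta$ in $\mathfrak{O}$ for every $i$, and primitivity of $\iota$ then gives $E[\ell_i]\not\subseteq\ker\iota(\theta)$, hence the $\ell$-primary part of $\ker\varphi$ is cyclic for every prime $\ell$. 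Add one sentence to this effect and the argument is complete.
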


\begin{proof}
    By definition, $\varphi=\iota(\alpha)$ is a horizontal isogeny from $(E,\iota)$ to itself. 
    
    Since $N(\theta)=\deg(\varphi)$ is coprime with $\Delta_\mathfrak{O}$, $\ell_1$ does not divide the conductor of $\mathfrak{O}$ so $\varphi_1$ is horizontal or descending by Proposition \ref{prop: volcano structure}(i). Suppose $\varphi_1$ is descending. Since $\varphi$ is horizontal, there are as many descending as ascending $\ell_1$-isogenies in the decomposition of $\varphi$. 
    Reordering the $\varphi_i$ if necessary, we can assume $\ell_1 = \ell_2$ and $\varphi_2$ is ascending. Since there is only one ascending isogeny, $\varphi_2$ must be the dual of $\varphi_1$, up to post composition with an isomorphism. Thus, $\varphi_2\circ\varphi_1$ factors through $[\ell_1]$ and so does $\varphi$. Then $\varphi/[\ell_1]\in\End(E)$ and $\iota$ is not a primitive orientation, contradicting our original assumption. 
    
    By the same argument, the $\varphi_i$ are also horizontal for $i\geq 2$.
    
    
\end{proof}

Since the $\varphi_i$ are horizontal, we may use $\mathfrak{O}$-oriented isogeny trees to find these isogenies. Let $s:=\lfloor r/2\rfloor$, $\mathcal{T}_1$ the $\mathfrak{O}$-oriented $(\ell_1,\cdots, \ell_{s})$-isogeny tree starting at $E_0:=E$ and $\mathcal{T}_2$ the $\mathfrak{O}$-oriented $(\ell_{s+1},\cdots, \ell_{r})$-isogeny tree starting at $E$. Assume we have found a common leaf $E_s$ in $\mathcal{T}_1$ and $\mathcal{T}_2$. The branch of $\mathcal{T}_1$ of leaf $E_s$ is a chain of horizontal $\ell_i$-isogenies $\psi_i:E_{i-1}\longrightarrow E_i$ for $i\in\{1,\cdots, s\}$ and the branch of $\mathcal{T}_2$ of leaf $E_s$ (taken depth first) is a chain of horizontal $\ell_i$-isogenies $\psi_i:E_{i-1}\longrightarrow E_i$ for $i\in\{s+1,\cdots, r\}$, with $E_r=E_0=E$. The isogeny $\psi:=\psi_r\circ\cdots\circ\psi_1$ is a horizontal isogeny of degree $\prod_{i=1}^r \ell_i=N(\theta)$, but we do not know \emph{a priori} if $\psi=\varphi=\iota(\theta)$.

\begin{lemma}\label{lemma: explicit horizontal endomorphism}
    Let $(E_0,\iota)$ be an $\mathfrak{O}$-oriented supersingular elliptic curve and $\psi\in\End(E_0)$ a horizontal endomorphism of degree coprime to $p$. Then, there exists $\alpha\in\mathfrak{O}$ such that $\psi=\iota(\alpha)$.
\end{lemma}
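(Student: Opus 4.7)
The plan is to use the horizontal condition on $\psi$ to show that $\psi$ commutes with $\iota(\mathfrak{O})$ inside the quaternion algebra $B_{p,\infty} = \End(E_0)\otimes\mathbb{Q}$, then to apply the standard fact that the centralizer of the imaginary quadratic field $\iota(K)$ (where $K := \mathbb{Q}\otimes\mathfrak{O}$) in $B_{p,\infty}$ is $\iota(K)$ itself, and finally to use primitivity of $\iota$ to pin $\psi$ down inside $\iota(\mathfrak{O})$.

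First I would unpack what horizontal means for the endomorphism $\psi$. An endomorphism is horizontal precisely when the pushforward orientation equals the original orientation, i.e., $(\psi)_*\iota = \iota$. Expanding via the pushforward formula from Section~\ref{sec: orientations}, namely $(\psi)_*\iota(\alpha) = \frac{1}{\deg\psi}\,\psi\circ\iota(\alpha)\circ\hat\psi$, and then multiplying by $\deg\psi$ and post-composing with $\psi$ (using $\hat\psi\psi = [\deg\psi]$), we obtain
\[\psi\circ\iota(\alpha) = \iota(\alpha)\circ\psi\quad\text{for every }\alpha\in\mathfrak{O}.\]
Therefore $\psi$ centralizes $\iota(\mathfrak{O})$, and hence the whole quadratic field $\iota(K)$, inside $B_{p,\infty}$.

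To conclude, I would invoke two standard facts. First, $\iota(K)$ is a two-dimensional commutative subalgebra of the four-dimensional $\mathbb{Q}$-algebra $B_{p,\infty}$, making it a maximal commutative subfield; by the structure theory of quaternion algebras its centralizer in $B_{p,\infty}$ is $\iota(K)$ itself, so $\psi = \iota(\beta)$ for some $\beta\in K$. Second, primitivity of $\iota$ is equivalent to the equality $\iota(K)\cap\End(E_0) = \iota(\mathfrak{O})$, since any $\gamma\in K$ with $\iota(\gamma)\in\End(E_0)$ would extend $\iota$ to the strict superorder $\mathfrak{O}+\mathbb{Z}\gamma$. Since $\psi\in\End(E_0)\cap\iota(K)$, we conclude $\beta\in\mathfrak{O}$, as claimed. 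The hypothesis $\gcd(\deg\psi,p)=1$ enters only implicitly, guaranteeing that $\psi$ is separable and fits unambiguously into the pushforward formalism; the main (mild) obstacle in writing this up cleanly is recording that the notion of ``horizontal endomorphism'' really is encoded by the identity $(\psi)_*\iota = \iota$, after which the centralizer calculation and primitivity argument are essentially formal.
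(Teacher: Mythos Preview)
Your argument has a genuine gap: you assert that ``an endomorphism is horizontal precisely when $(\psi)_*\iota = \iota$'', but that is not what horizontal means in this paper. An isogeny is horizontal when its pushforward of $\iota$ is again a \emph{primitive} $\mathfrak{O}$-orientation on the codomain---it says nothing about which one. When the codomain is $E_0$ itself, the ambient hypothesis $|\Delta_\mathfrak{O}|<p$ together with Kaneko's inequality (Theorem~\ref{lemma: disc ineq}, or Corollary~\ref{cor: unique orientation}) forces $\psi_*(\iota)(\mathfrak{O}) = \iota(\mathfrak{O})$ as subrings, so $\psi_*\iota\in\{\iota,\overline{\iota}\}$. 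Your centralizer argument handles the case $\psi_*\iota=\iota$ correctly, but you never address the case $\psi_*\iota=\overline{\iota}$; there $\psi$ \emph{anticommutes} with the imaginary part of $\iota(K)$, so $\psi\notin\iota(K)$ and your conclusion fails.

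Ruling out $\psi_*\iota=\overline{\iota}$ is exactly where the hypothesis $\gcd(\deg\psi,p)=1$ is used---it is not the harmless separability condition you suggest. The paper invokes Onuki's description of the two $\Cl(\mathfrak{O})$-orbits on $\SS_{\mathfrak{O}}^{pr}(p)$: the orientations $(E_0,\iota)$ and $(E_0,\overline{\iota})$ lie in orbits related by the $p$-Frobenius $\pi_p$, so if $\psi_*\iota=\overline{\iota}$ then $\widehat{\pi_p}\circ\widehat{\varphi_{\mathfrak{b}}}\circ\psi\in\iota(\mathfrak{O})$ for some ideal $\mathfrak{b}$ of norm prime to $p$. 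Taking norms forces $p\mid\deg(\psi)$ (using that $p$ is inert in $K$ under $|\Delta_\mathfrak{O}|<p$), contradicting the hypothesis. Without this step your proof is incomplete.
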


\begin{proof}
    Since $\psi$ is horizontal, $\psi_*(\iota)$ defines an $\mathfrak{O}$-orientation on $E_0$, like $\iota$. Since $|\Delta_{\mathfrak{O}}|<p$, by Lemma \ref{lemma: disc ineq}, we must have $\psi_*(\iota)(\mathfrak{O})=\iota(\mathfrak{O})$, so that $\psi_*(\iota)=\iota$ or $\psi_*(\iota)=\overline{\iota}$, where $\overline{\iota}(\alpha):=\iota(\overline{\alpha})$ for all $\alpha\in K$.

    If $\psi_*(\iota)=\iota$, $\psi$ commutes with $\iota(K)$ ($K:=\mathfrak{O}\otimes_\Z\Q$), so $\psi\in\iota(K)\cap\End(E_0)=\iota(\mathfrak{O})$ and $\psi=\iota(\alpha)$ for some $\alpha\in\mathfrak{O}$.

    Suppose $\psi_*(\iota)=\overline{\iota}$. As Onuki proved in \cite[Proposition 3.3 and Theorem 3.4]{OSIDH_Onuki}, $(E,\overline{\iota})$ and $(E^{(p)},(\pi_p)_*(\iota))$ are in the same orbit of the action of $\Cl(\mathfrak{O})$ on the set $\SS_{\mathfrak{O}}^{pr}(p)$ of (primitively) $\mathfrak{O}$-oriented supersingular elliptic curves over $\mathbb{F}_{p^2}$ ($\pi_p:E\longrightarrow E^{(p)}$ being the $p$-Frobenius isogeny). Hence, there exists an ideal $\mathfrak{b}\subset\mathfrak{O}$ of norm coprime with $p$, such that  $(E,\overline{\iota})=\mathfrak{b}\cdot (E^{(p)},(\pi_p)_*(\iota))$, so that $\psi_*(\iota)=\overline{\iota}=(\varphi_{\mathfrak{b}}\circ\pi_p)_*(\iota)$. 
    Consequently, $\widehat{\pi_p}\circ\widehat{\varphi_{\mathfrak{b}}}\circ\psi$ commutes with $\iota(K)$, so there exists $\alpha\in\mathfrak{O}$ such that $\widehat{\pi_p}\circ\widehat{\varphi_{\mathfrak{b}}}\circ\psi=\iota(\alpha)$ and $p \mid N(\alpha)$. Since $\SS_{\mathfrak{O}}^{pr}(p)$ is not empty (it contains $E_0$), $p$ is either inert or ramified in $K$ by \cite[Proposition~3.2]{OSIDH_Onuki}. The prime $p$ cannot be ramified, otherwise we would have $p \mid \Delta_{\mathfrak{O}}$, so $|\Delta_{\mathfrak{O}}|\geq p$. If $p$ is inert and $p \mid N(\alpha)$, then $p \mid \alpha$ so that $p^2 \mid N(\alpha)$ and $p \mid \deg(\psi)N(\mathfrak{b})$. Since $N(\mathfrak{b})$ is coprime with $p$, $p \mid \deg(\psi)$ which contradicts our assumption.

    It follows that $\psi_*(\iota)=\iota$. 
\end{proof}

\begin{lemma}\label{lemma: values alpha}
    Let $\theta:=a+\omega\in\mathfrak{O}$, with $a\in\Z$, $|a|\leq\sqrt{N(\omega)}$. Assume $N(\theta)$ is not a square and $\Delta_{\mathfrak{O}}\neq -3, -4$. The only $\alpha\in\mathfrak{O}$ such that $N(\alpha)=N(\theta)$ are $\alpha = \pm\theta,\pm\overline{\theta}$.
\end{lemma}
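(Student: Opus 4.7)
The plan is to parameterize elements of $\mathfrak{O}$ in the $\mathbb{Z}$-basis $(1,\omega)$ and show that the diophantine equation $N(\alpha)=N(\theta)$ forces $\alpha$ to lie in a tiny explicit set. Writing $\alpha=x+y\omega$ with $x,y\in\mathbb{Z}$ and using $\omega+\overline{\omega}=s$, $\omega\overline{\omega}=N(\omega)=(s^2+|\Delta_{\mathfrak{O}}|)/4$, the norm form is $N(x+y\omega)=x^2+sxy+N(\omega)y^2$. After multiplying by $4$ and completing the square one obtains the clean equation
\begin{equation*}
(2x+sy)^2+|\Delta_{\mathfrak{O}}|\,y^2\;=\;(2a+s)^2+|\Delta_{\mathfrak{O}}|, \qquad (\star)
\end{equation*}
which is the workhorse of the proof.

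The main step, and the place where the hypotheses get used, is to show $|y|\le 1$. I would assume $|y|\ge 2$ for contradiction: then $(\star)$ gives $(2a+s)^2\ge 3|\Delta_{\mathfrak{O}}|$. Bounding the left-hand side using $|a|\le\sqrt{N(\omega)}$ and $s\in\{0,1\}$ yields $(2a+s)^2\le(2\sqrt{N(\omega)}+1)^2=|\Delta_{\mathfrak{O}}|+s^2+4\sqrt{N(\omega)}+1$, whence $4\sqrt{N(\omega)}\ge 2|\Delta_{\mathfrak{O}}|-2$. Squaring and using $N(\omega)\le(|\Delta_{\mathfrak{O}}|+1)/4$ collapses this to $|\Delta_{\mathfrak{O}}|^2\le 3|\Delta_{\mathfrak{O}}|$, i.e.\ $|\Delta_{\mathfrak{O}}|\le 3$, contradicting the exclusion of $\Delta_{\mathfrak{O}}=-3,-4$. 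This is the delicate step, since the inequality is nearly tight: the extra automorphisms at the excluded discriminants are precisely what would allow larger $|y|$, so the hypothesis $\Delta_{\mathfrak{O}}\notin\{-3,-4\}$ is exactly what the argument needs.

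With $|y|\le 1$ established, the remainder is bookkeeping. The case $y=0$ would give $N(\theta)=x^2$, excluded by the non-square hypothesis. For $y=1$, $(\star)$ reduces to $(2x+s)^2=(2a+s)^2$, so $x\in\{a,-(a+s)\}$, giving $\alpha=\theta$ or $\alpha=-(a+s)+\omega=-(a+\overline{\omega})=-\overline{\theta}$. For $y=-1$ the analogous computation $(2x-s)^2=(2a+s)^2$ yields $\alpha=\overline{\theta}$ or $\alpha=-\theta$. This exhausts the solutions and gives exactly $\{\pm\theta,\pm\overline{\theta}\}$.
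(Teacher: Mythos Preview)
Your proof is correct and follows essentially the same approach as the paper: both write $\alpha$ in the basis $(1,\omega)$, use a size estimate to force the $\omega$-coefficient into $\{-1,0,1\}$, and then dispatch the three cases. The only cosmetic difference is that you complete the square to obtain $(\star)$ whereas the paper minimizes the norm form directly, but the resulting inequalities and casework are the same.
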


\begin{proof}
    Let $\alpha:=b+c\omega\in\mathfrak{O}$ with $b,c\in\Z$ such that $N(\alpha)=N(\theta)$. Then
    \begin{equation}b^2-tbc+c^2n=N(\alpha)=N(\theta)=a^2-ta+n,\label{eq: norm a, b, c}\end{equation}
    with $t:=\Tr(\omega)\in\{0,1\}$ and $n:=N(\omega)=(t^2+|\Delta_{\mathfrak{O}}|)/4$. 
    
    If $c^2>1$, the minimum value of $b^2-tbc+c^2n$ is reached when $b=ct/2$, so
    \begin{equation}
        b^2-tbc+c^2n\geq\left(n-\frac{t^2}{4}\right)c^2=\frac{|\Delta_{\mathfrak{O}}|c^2}{4}\geq |\Delta_{\mathfrak{O}}|
    \label{eq: norm a, b, cII}
    \end{equation}
    But, by \eqref{eq: norm a, b, c} and since $|a|\leq\sqrt{n}$, we have: 
    \[N(\theta)\leq 2n+t\sqrt{n}<|\Delta_{\mathfrak{O}}|\] which contradicts \eqref{eq: norm a, b, cII}. 
    
    So $c^2\leq 1$ and $c\in\{0,\pm 1\}$. If $c=0$, then $N(\theta)$ is a square which is not possible. If $c=1$, then \eqref{eq: norm a, b, c} becomes $(b-a)(b+a-t)=0$ and we have $b=a$ or $b=t-a$. If $c=-1$, then \eqref{eq: norm a, b, c} becomes $(b+a)(b-a+t)=0$ and we have $b=-a$ or $b=a-t$. Hence, $(b,c)\in\{(a,1),(t-a,1),(-a,-1),(a-t,-1)\}$ and $\alpha\in\{\pm\theta,\pm\overline{\theta}\}$.
\end{proof}

\begin{remark}
    The cases of $\Delta_{\mathfrak{O}}=-3, -4$ are excluded from this lemma because in those cases, we have a very simple way to find the orientation: 
    
    If $\Delta_\mathfrak{O}=-3$, then $\mathfrak{O}=\mathbb{Z}[\zeta_3]$, with $\zeta_3:=(1+\sqrt{-3})/2$ so any elliptic curve $E$ that is $\mathfrak{O}$-oriented contains an automorphism of order $3$. By \cite[Theorem III.10.1]{Silverman}, we must have $j(E)=0$, so $E$ is given by the Weierstrass equation $y^2=x^3+1$ (up to isomorphism), and $\zeta_3$ corresponds to the automorphism $(x,y)\in E\longmapsto (\xi_3 x, y)\in E$, where $\xi_3$ is a primitive third root of unity in $\mathbb{F}_{p^2}$.

    Similarly, if $\Delta_\mathfrak{O}=-4$, then $\mathfrak{O}=\mathbb{Z}[i]$ so any elliptic curve $E$ that is $\mathfrak{O}$-oriented contains an automorphism of order $4$. By \cite[Theorem III.10.1]{Silverman}, we must have $j(E)=1728$, so $E$ is given by the Weierstrass equation $y^2=x^3+x$ (up to isomorphism), and $i$ corresponds to the automorphism $(x,y)\in E\longmapsto (x, a y)\in E$, where $a$ is a square root of $-1$ in $\mathbb{F}_{p^2}$.
\end{remark}

By Lemmas \ref{lemma: explicit horizontal endomorphism} and \ref{lemma: values alpha}, we must have $\psi=\pm\iota(\theta)=\pm\varphi$ or $\psi=\pm\iota(\overline{\theta})=\pm\widehat{\varphi}$. The sign can be determined by computing $\Tr(\psi)$ using Schoof's algorithm \cite[Section 5]{Schoof_1995} and comparing to $\Tr(\theta)$. We recover $\iota$ or $\overline{\iota}: \mathfrak{O}\hookrightarrow\End(E)$ by mapping $\theta$ to $\pm\psi$.

However, the factors $\psi_i$ of $\psi$ have subexponential degree so they do not provide an efficient representation of $\psi$ (enabling to evaluate $\psi$ in polynomial time for instance). We apply \EfficientRep{} Algorithm \ref{algo: efficient representation} to get an efficient representation of $\iota(\omega)$ or $\iota(\overline{\omega})=\pm\psi-[a]$. The search to decision reduction Algorithm \ref{algo: reduction search to decision} follows.

For efficiency, only $j$-invariants are stored in the trees and not the $\ell_i$-isogenies relating them so we use the method of Section \ref{sec: isogenies from j-invariants} to recover them in time $\tilde{O}(\ell_i^2\log(p))$. 

\begin{algorithm}
\SetAlgoLined
\KwData{\justifying A supersingular elliptic curve $E/\mathbb{F}_{p^2}$, smoothness parameters $B,r_{m},D$, an imaginary quadratic order $\mathfrak{O}$ of discriminant $\Delta_{\mathfrak{O}}\neq-3,-4$ such that $|\Delta_{\mathfrak{O}}|<p/B$ and canonical generator $\omega$ along with an oracle \IsOrientable{\mathfrak{O}} for the Decision $\mathfrak{O}$-Orienting Problem \ref{prob:OOrientingDecision}.}

\KwResult{\justifying If $E$ is $\mathfrak{O}$-orientable, an efficient representation $F$ (as defined in~\ref{def: efficient rep}) of an endomorphism $\varphi_0\in\End(E)$ such that $\deg(\varphi_0)=N(\omega)$ and $\Tr(\varphi_0)=\Tr(\omega)$, where $\omega$ is the canonical generator of $\mathfrak{O}$.}

\If{\emph{not} $\IsOrientable{\mathfrak{O}}(E)$}{Return $\bot$\;}
$\theta, \ell_1,\cdots, \ell_r\longleftarrow \FindSmoothGen{}(\Delta_{\mathfrak{O}},B,r_{m})$ (Algorithm \ref{algo: smooth norm})\; 

$s\longleftarrow\lfloor r/2\rfloor$\;
$\mathcal{T}_1\longleftarrow\TreeFill{}(\mathfrak{O},E,\ell_1,\cdots,\ell_s)$ (Algorithm \ref{algo: tree filling algorithm})\;

$\mathcal{T}_2\longleftarrow\TreeFill{}(\mathfrak{O},E,\ell_r,\ell_{r-1},\cdots,\ell_{s+1})$\;

Search for a matching leaf $j(E_{s})\in \Leaves(\mathcal{T}_1)\cap\Leaves(\mathcal{T}_1)$\;\label{step:leaf_match}

Recover from the branch of leaf $j(E_s)$ in $\mathcal{T}_1$ the $\ell_i$-isogeny $\psi_i:E_{i-1}\longrightarrow E_i$ for all $i\in\{1,\cdots, s\}$ (using Section \ref{sec: isogenies from j-invariants})\;\label{line: recover isogeny 1}

Recover from the branch of leaf $j(E_s)$ in $\mathcal{T}_2$ the $\ell_i$-isogeny $\psi_i:E_{i-1}\longrightarrow E_i$ for all $i\in\{s+1,\cdots, r\}$\;\label{line: recover isogeny 2}

Let $\psi:= \psi_r\circ\cdots\circ\psi_1$\; 

Compute $\Tr(\psi)$ using Schoof's algorithm \cite[Section 5]{Schoof_1995}\;

$s\longleftarrow \Delta_\mathfrak{O} \mod 2$\;

$\omega\longleftarrow (s+\sqrt{\Delta_\mathfrak{O}})/2$\;

Let $\epsilon:=\Tr(\psi)/\Tr(\theta)$ and $\theta:=a+\omega$\;

$F\longleftarrow\EfficientRep{}([\epsilon]\circ\psi-[a],D)$ (Algorithm \ref{algo: efficient representation})\;

Return $F$\;

\caption{Algorithm to solve the $\mathfrak{O}$-Orienting Problem \ref{prob:OOrienting} with an oracle for the Decision $\mathfrak{O}$-Orienting Problem \ref{prob:OOrientingDecision}.}\label{algo: reduction search to decision}
\end{algorithm}

\subsection{Complexity analysis}\label{sec: complexity analysis}

\subsubsection{Complexity of the smooth norm search (Algorithm \ref{algo: smooth norm})}

To estimate the complexity of Algorithm \ref{algo: smooth norm}, we need to determine the probability that $N(a+\omega)$ is ns-$(B,r_m,\Delta_\mathfrak{O})$-smooth. We have proven results on the distribution of $B$-smooth integers among random integers but not for random values of quadratic integer polynomials. For that reason, we introduce the following heuristic assumption.

\begin{heuristic}\label{heur: smoothness proba}
    Let $f:=X^2-tX+N\in\mathbb{Z}[X]$, $a$ following the uniform distribution in $\{-\lfloor\sqrt{N}\rfloor,\cdots,\lfloor\sqrt{N}\rfloor\}$, and $b$ following the uniform distribution in $\{0,\cdots, N\}$. Then there exist constants $C>0$, $c>0$ such that for all $N\in\mathbb{Z}_{>0}$, $\log^c(N)\leq B\leq N$, $\log(N)/log(B)\leq r\leq\log_2(N)$ and $d\leq 4N$, we have:
    \[\mathbb{P}(f(a) \mbox{ is ns-$(B,r,d)$-smooth})\geq C\cdot \mathbb{P}(b \mbox{ is ns-$(B,r,d)$-smooth})\]
\end{heuristic}

This heuristic assumption is supported by an estimate on $B$-smooth values of polynomials very similar to random integers. Such an estimate has been proved in \cite[Theorem 1.1]{Martin2002} under a dual hypothesis on the number of prime values of polynomials when $B$ is in a very tight range. 
It has been conjectured \cite[Equation 1.20]{Granville2008} that this result holds for broader values of $B$.

\begin{lemma}\label{lemma: number smooth numbers}
Let $\Psi_{r}(x,y,d)$ denote the number of $(y,r,d)$-smooth integers $\leq x$:
\[\Psi_r(x,y,d)=\#\left\{n\leq x\;\middle|\; n=\prod_{i=1}^s\ell_i, \ s\leq r \mbox{ and } \forall \, 1\leq i\leq s, \ \ell_i\leq y \mbox{ and } \ell_i\nmid d \right\}\]
Then, if $r\geq\log(x)/\log(y)$,
\[\Psi_r(x,y,d)\geq \binom{\pi(y)-\pi(z)-\omega_y(d)+\lfloor\frac{\log(x)}{\log(y)}\rfloor}{\pi(y)-\pi(z)-\omega_y(d)}\]
with $z:=x^{1/r}$, $\pi(t)$ the number of prime numbers $\leq t$ and $\omega_y(d)$ the number of distinct prime divisors $\leq y$ of $d$.
\end{lemma}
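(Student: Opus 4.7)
The plan is to lower-bound $\Psi_r(x,y,d)$ by explicitly constructing a large family of $(y,r,d)$-smooth integers $\le x$ and counting it via stars-and-bars. The key idea is that once we restrict to a fixed pool of admissible primes, every monomial in those primes with bounded total exponent automatically satisfies all three smoothness conditions.

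First, I would define the admissible prime set
\[S := \{p \text{ prime} : z < p \le y,\ p \nmid d\},\]
where $z = x^{1/r}$. Since at most $\omega_y(d)$ primes $\le y$ divide $d$, we have $|S| \ge \pi(y) - \pi(z) - \omega_y(d) =: M$. Next, I would set $k := \lfloor \log(x)/\log(y)\rfloor$ and consider, for each tuple $(a_p)_{p\in S}$ of non-negative integers with $\sum_{p\in S} a_p \le k$, the integer
\[n_{(a_p)} := \prod_{p\in S} p^{a_p}.\]

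I would then verify that each such $n_{(a_p)}$ is a $(y,r,d)$-smooth integer $\le x$. The size bound $n_{(a_p)} \le y^{\sum a_p} \le y^k \le x$ follows immediately from $p \le y$. The bound $s = \sum a_p \le r$ on the number of prime factors (with multiplicity) follows from $k \le r$, which is exactly the hypothesis $r \ge \log(x)/\log(y)$. Finally, each prime factor is $\le y$ and coprime to $d$ by construction of $S$. By unique factorization, distinct tuples produce distinct integers, so
\[\Psi_r(x,y,d) \ge \#\bigl\{(a_p)_{p\in S} \in \mathbb{Z}_{\ge 0}^{|S|} : \textstyle\sum_p a_p \le k\bigr\} = \binom{|S|+k}{|S|},\]
the last equality being the standard stars-and-bars identity (introducing a slack variable $a_0 \ge 0$ with $a_0 + \sum_p a_p = k$). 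To conclude, I would observe that $\binom{M'+k}{k}$ is increasing in $M'$ for fixed $k$ (it is the product $\prod_{j=1}^k (M'+j)/j$), so $\binom{|S|+k}{|S|} \ge \binom{M+k}{M}$, giving the claimed inequality.

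There is no real obstacle: the result is a direct construction-and-count argument and the only place the hypothesis is used is to guarantee $k \le r$, which is what makes every constructed integer satisfy the $s \le r$ constraint in the definition of $(y,r,d)$-smoothness. The restriction $p > z$ is harmless for the construction (we could even take $p \le y$), but stating the bound in terms of $\pi(y)-\pi(z)$ will be convenient when this lemma is fed into asymptotic estimates in the subsequent complexity analysis.
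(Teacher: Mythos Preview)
Your proof is correct and follows essentially the same approach as the paper: restrict to primes in $(z,y]$ not dividing $d$, build integers with total exponent at most $k=\lfloor\log(x)/\log(y)\rfloor$, verify the three smoothness conditions, and count via stars-and-bars. The paper organizes the argument as a chain of set inclusions and reaches the same final count $\binom{M+k}{M}$; your direct construction-and-verification is a slightly cleaner packaging of the identical idea, and your observation that the restriction $p>z$ is not actually needed for the $s\le r$ bound (since $k\le r$ already suffices) is correct.
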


\begin{proof}
    The proof follows from \cite[§ 2]{DeBruijn1966}. We have the following inequalities (following from set inclusions):
\begin{align*}
\Psi_r(x,y,d)&=\#\Bigg\{(\alpha_\ell)_{\substack{\ell\leq y\\\ell\nmid d}}\in\mathbb{N}^{\pi(y)-\omega_y(d)}\;\Bigg|\; \#\{ \ell\leq y, \ell\nmid d\mid \alpha_\ell\neq 0\}\leq r\\
& \qquad \mbox{and} \ \sum_{\ell\leq y}\alpha_\ell\log(\ell)\leq\log(x) \Bigg\}\\
&\geq \#\left\{(\alpha_\ell)_{\substack{z<\ell\leq y\\\ell\nmid d}}\in\mathbb{N}^{\pi(y)-\pi(z)-\omega_y(d)}\;\middle|\; \sum_{\substack{z<\ell\leq y\\\ell\nmid d}}\alpha_\ell\log(\ell)\leq\log(x) \right\}\\
&\geq \#\left\{(\alpha_\ell)_{\substack{z<\ell\leq y\\\ell\nmid d}}\in\mathbb{N}^{\pi(y)-\pi(z)-\omega_y(d)}\;\middle|\; \sum_{\substack{z<\ell\leq y\\\ell\nmid d}}\alpha_\ell\leq\left\lfloor\frac{\log(x)}{\log(y)} \right\rfloor\right\}
\end{align*}

To conclude, we compute the cardinality of 
\[S(k,n):=\left\{(\alpha_1,\cdots,\alpha_k)\in\mathbb{N}^k\;\middle|\;\sum_{i=1}^k \alpha_i\leq n\right\}\]
for $k,n\in\mathbb{Z}_{>0}$ and apply it to the last set in the inequalities above. The set $S(k,n)$ is in bijection with the subsets of $k$ elements in $\{1,\cdots,n+k\}$, via the maps:
\[\{s_1<\cdots<s_k\}\longmapsto (s_1-1,s_2-s_1-1,\cdots,s_k-s_{k-1}-1)\]
\[(\alpha_1,\cdots,\alpha_k)\longmapsto \{\alpha_1+1,\alpha_1+\alpha_2+2,\cdots,\alpha_1+\cdots+\alpha_k+k\}.\]
It follows that 
\[\#S(k,n)=\binom{n+k}{k}.\]
\end{proof}

\begin{lemma}\label{lemma: DeBrujin}
Let $\psi(x,y)$ be the number of $y$-smooth numbers $\leq x$. Assume that $\log(y)\ll\log(x)$ and $\log(y)\gg\log\log(x)$. Then
\[\log\left(\frac{\psi(x,y)}{x}\right)\sim -\frac{\log(x)\log\log(x)}{\log(y)}.\]
\end{lemma}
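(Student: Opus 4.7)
The plan is to derive the estimate from the classical Dickman--de Bruijn asymptotic for the counting function $\psi(x,y)$, combined with the known asymptotic behavior of the Dickman function $\rho$. Concretely, set $u:=\log(x)/\log(y)$, so that the hypothesis $\log(y)\ll\log(x)$ forces $u\to\infty$ while $\log(y)\gg\log\log(x)$ places us safely inside Hildebrand's range of validity. The first step is to invoke the asymptotic (proved in \cite{DeBruijn1966} and sharpened by later authors)
\[
\psi(x,y) \;=\; x\rho(u)\bigl(1+o(1)\bigr),
\]
where $\rho$ is defined by $\rho(u)=1$ for $0\leq u\leq 1$ and the delay differential equation $u\rho'(u)=-\rho(u-1)$ for $u>1$. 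Taking logarithms reduces the problem to controlling $\log\rho(u)$.

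The second step is the classical saddle-point asymptotic for the Dickman function, namely
\[
\log\rho(u) \;=\; -u\bigl(\log u + \log\log u - 1\bigr)\bigl(1+o(1)\bigr)\qquad\text{as } u\to\infty,
\]
which in particular gives $\log\rho(u)\sim -u\log u$. Combining with the first step,
\[
\log\!\left(\frac{\psi(x,y)}{x}\right) \;=\; \log\rho(u) + o(1) \;\sim\; -\,u\log u.
\]

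The third step is to rewrite $u\log u$ in the desired form. Substituting $u=\log(x)/\log(y)$,
\[
u\log u \;=\; \frac{\log(x)}{\log(y)}\bigl(\log\log(x)-\log\log(y)\bigr).
\]
The hypothesis $\log(y)\gg\log\log(x)$ implies in particular that $\log\log(y)=o(\log\log(x))$, so that $\log u \sim \log\log(x)$, and therefore
\[
u\log u \;\sim\; \frac{\log(x)\log\log(x)}{\log(y)},
\]
which is precisely the claimed estimate.

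The main obstacle is not the chain of asymptotics themselves, all of which are standard, but rather verifying that the hypotheses of the lemma land inside the precise range where the Dickman--de Bruijn asymptotic $\psi(x,y)\sim x\rho(u)$ is known to hold with error $o(1)$ on the logarithmic scale, and simultaneously that $\log\log(y) = o(\log\log(x))$. For the first it suffices to cite Hildebrand's refinement; for the second one needs to read the ``$\gg$'' in the hypothesis strictly enough, and I would spell out this implication explicitly in the write-up so that the $\sim$ in the conclusion is unambiguous.
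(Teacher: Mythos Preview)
Your route through the Dickman function is genuinely different from the paper's: the paper never introduces $\rho(u)$ at all, but instead plugs the hypotheses directly into de~Bruijn's explicit formula \cite[Theorem~1]{DeBruijn1966} for $\log\psi(x,y)$, expands the two summands $\log\!\big(1+\tfrac{y}{\log x}\big)\tfrac{\log x}{\log y}$ and $\log\!\big(1+\tfrac{\log x}{y}\big)\tfrac{y}{\log y}$, and reads off the term $-\tfrac{\log x\,\log\log x}{\log y}$ from the first one after subtracting $\log x$.

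Your third step, however, contains a false implication that breaks the argument. You assert that $\log(y)\gg\log\log(x)$ forces $\log\log(y)=o(\log\log(x))$; it does not. For instance, take $\log y=(\log x)^{1/2}$: then both hypotheses $\log y\ll\log x$ and $\log y\gg\log\log x$ hold, yet $\log\log y=\tfrac12\log\log x$, which is \emph{not} $o(\log\log x)$. The condition $\log y\gg\log\log x$ is a \emph{lower} bound on $\log y$ and says nothing about $\log\log y$ being small; only the upper bound $\log y\ll\log x$ constrains $\log\log y$, and it merely gives $\log\log y<\log\log x$. This is not a cosmetic issue you can fix by ``reading $\gg$ strictly enough'': in the example above your expression
\[
u\log u=\frac{\log x}{\log y}\bigl(\log\log x-\log\log y\bigr)=\sqrt{\log x}\cdot\tfrac12\log\log x
\]
differs from $\tfrac{\log x\,\log\log x}{\log y}=\sqrt{\log x}\,\log\log x$ by a factor of~$2$, so the chain $\log(\psi/x)\sim\log\rho(u)\sim -u\log u$ simply does not land on the target the lemma states. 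The paper avoids this because the term it extracts from de~Bruijn's formula is $-\tfrac{\log x\,\log\log x}{\log y}$ itself, not $-u\log u$.
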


\begin{proof}
    It follows from \cite[Theorem 1]{DeBruijn1966} that for all $2< y\leq x$:
    \begin{multline}
        \log\psi(x,y)=\left(\log\left(1+\frac{y}{\log(x)}\right)\frac{\log(x)}{\log(y)}+\log\left(1+\frac{\log(x)}{y}\right)\frac{y}{\log(y)}\right)\\
        \cdot\left(1+O\left(\frac{1}{\log(y)}\right)+O\left(\frac{1}{\log\log(x)}\right)+O\left(\left(1+\cfrac{\log(x)}{\log(y)}\right)^{-1}\right)\right)\label{eq: DeBrujin}
    \end{multline}

    Since $\log(y)\gg\log\log(x)$, we have $y\gg\log^2(x)$, so that
    \begin{align*}\log\left(1+\frac{y}{\log(x)}\right)\frac{\log(x)}{\log(y)}&=\left(\log\left(\frac{y}{\log(x)}\right)+\log\left(1+\frac{\log(x)}{y}\right)\right)\frac{\log(x)}{\log(y)}\\
    &=\log(x)-\frac{\log(x)\log\log(x)}{\log(y)}+\frac{\log^2(x)}{y\log(y)}(1+o(1))\\
    &=\log(x)-\frac{\log(x)\log\log(x)}{\log(y)}+o(1)
    \end{align*}
    And
    \[\log\left(1+\frac{\log(x)}{y}\right)\frac{y}{\log(y)}=\frac{\log(x)}{\log(y)}(1+o(1)).\]
    It follows finally by \ref{eq: DeBrujin} that
    \[\log\left(\frac{\psi(x,y)}{x}\right)\sim-\frac{\log(x)\log\log(x)}{\log(y)}.\]
\end{proof}

\begin{lemma}\label{lemma: smooth asymptotics}
    Let $\psi_r^*(x,y,d)$ be the number of ns-$(y,r,d)$-smooth integers $\leq x$. Let $z:=x^{1/r}$ and $\varepsilon:=z/y$. Assume that $r\geq \log(x)/\log(y)$, $d=O(x)$, $\log(y)\ll\log(x)$, $\log(y)\gg\log\log(x)$ and $\log(1-\varepsilon)\ll \log\log(y)$. Then 
    \[\log\left(\cfrac{\psi_r^*(x,y,d)}{x}\right)\sim -\frac{\log(x)\log\log(x)}{\log(y)}\]
    as $x,y,r,d\longrightarrow +\infty$.
\end{lemma}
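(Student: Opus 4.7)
The plan is to squeeze $\psi_r^*(x,y,d)$ between an upper bound coming from all $y$-smooth numbers and a lower bound coming from Lemma~\ref{lemma: number smooth numbers}, and show both have the same leading asymptotic.

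\textbf{Upper bound.} Since every ns-$(y,r,d)$-smooth integer is in particular $y$-smooth, $\psi_r^*(x,y,d) \leq \psi(x,y)$, and Lemma~\ref{lemma: DeBrujin} gives
\[
\log\!\left(\frac{\psi(x,y)}{x}\right) \sim -\frac{\log(x)\log\log(x)}{\log(y)}.
\]
This handles the upper bound directly under the hypotheses $\log(y)\ll \log(x)$ and $\log(y)\gg \log\log(x)$.

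\textbf{Lower bound via the combinatorial estimate.} Applying Lemma~\ref{lemma: number smooth numbers}, and then removing the (at most $\sqrt{x}$) perfect squares, we get
\[
\psi_r^*(x,y,d) \geq \binom{k+m}{k} - \sqrt{x},\qquad k:=\pi(y)-\pi(z)-\omega_y(d),\quad m:=\lfloor\log(x)/\log(y)\rfloor.
\]
I will use the elementary bound $\binom{k+m}{m}\geq (k/m)^m$, so $\log\binom{k+m}{m}\geq m\log(k/m)$. The three quantities to control are: (i)~$\pi(y)-\pi(z)$ via PNT, using the assumption $\log(1-\varepsilon)\ll\log\log(y)$ to ensure $\pi(y)-\pi(z)\sim (1-\varepsilon)y/\log(y)$, so that $\log(\pi(y)-\pi(z))\sim \log(y)$; (ii)~$\omega_y(d)$, which is $O(\log(d)/\log\log(d))=O(\log(x)/\log\log(x))$, negligible compared to $y/\log(y)$ because $\log(y)\gg\log\log(x)$ forces $y$ to grow faster than any power of $\log(x)$; (iii)~$m\sim \log(x)/\log(y)$.

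\textbf{Matching the two asymptotics.} Combining the estimates from (i)--(iii), $\log(k)\sim\log(y)$ and $\log(m)\sim\log\log(x)$, so
\[
m\log(k/m)\sim \frac{\log(x)}{\log(y)}\bigl(\log(y)-\log\log(x)\bigr)=\log(x)-\frac{\log(x)\log\log(x)}{\log(y)}.
\]
Because $\log(y)\gg\log\log(x)$, this lower bound exceeds $\tfrac12\log(x)$ for large $x$, so the $-\sqrt{x}$ correction from removing squares is absorbed into lower-order terms, yielding
\[
\log\!\left(\frac{\psi_r^*(x,y,d)}{x}\right)\geq -\frac{\log(x)\log\log(x)}{\log(y)}(1+o(1)).
\]
Together with the matching upper bound, this proves the claimed equivalence.

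\textbf{Main obstacle.} The delicate point is controlling the leading asymptotic of $\log\binom{k+m}{m}$ against the sharp DeBruijn-type upper bound: both the hypothesis $\log(1-\varepsilon)\ll\log\log(y)$ (used to guarantee $\log(k)\sim\log(y)$ rather than something smaller) and the hypothesis $\log(y)\gg\log\log(x)$ (used both to make $\omega_y(d)$ negligible against $y/\log(y)$ and to make the square count negligible) are exactly what lines up the two sides. The routine but careful verification is checking that the $o(1)$ error in the lower bound matches the implicit $o(1)$ in the $\sim$ of Lemma~\ref{lemma: DeBrujin}.
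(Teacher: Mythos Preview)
Your approach is essentially the paper's: upper-bound $\psi_r^*(x,y,d)$ by $\psi(x,y)$ and invoke Lemma~\ref{lemma: DeBrujin}, lower-bound via Lemma~\ref{lemma: number smooth numbers} minus $\sqrt{x}$ squares, estimate $k$ by the prime number theorem, and check the square count is negligible. The paper expands $\log\binom{n+k}{k}$ via Stirling to get $n\log(k/n)+n-\tfrac12\log n+O(1)$, whereas you use the cruder $\binom{k+m}{m}\geq (k/m)^m$; the extra $+n-\tfrac12\log n$ is lower-order, so your simplification is harmless.

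One inaccuracy to fix: the claim $\log(m)\sim\log\log(x)$ is not true under the stated hypotheses, since $m=\log(x)/\log(y)$ gives $\log(m)=\log\log(x)-\log\log(y)$, and $\log\log(y)$ need not be $o(\log\log(x))$ (e.g.\ when $y=L_x(1/2,c)$ one has $\log\log(y)\sim\tfrac12\log\log(x)$). Fortunately this does not damage your conclusion, because in $\log(k/m)=\log(k)-\log(m)$ the two $\log\log(y)$ contributions (one from $\log k=\log y-\log\log y+o(\log\log y)$, one from $\log m$) cancel, leaving $\log(k/m)=\log(y)-\log\log(x)+o(\log\log(y))$ as you effectively use. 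Just state this cancellation rather than the incorrect $\log(m)\sim\log\log(x)$.
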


\begin{proof}
    The number of squares $\leq x$ being $\leq\sqrt{x}$, we have
    \[\psi_r^*(x,y,d)\geq \psi_r(x,y,d)-\sqrt{x}.\]
    And by lemma \ref{lemma: number smooth numbers}, 
    \[\psi_r(x,y,d)\geq\binom{n+k}{k}\]
    with $k:=\pi(x)-\pi(z)-\omega_y(d)$, $n:=\lfloor \log(x)/\log(y)\rfloor$, so that
    \begin{multline}
        \log\psi_r(x,y,d)\geq \log\binom{n+k}{k}=(n+k)\log(n+k)-k\log(k)-n\log(n)\\
        +\frac{1}{2}\log(n+k)-\frac{1}{2}\log(k)-\frac{1}{2}\log(n)+O(1).\label{eq: binom}
    \end{multline}

    We have $\pi(t)=t/\log(t)+O(t/\log(t)^2)$ as $t\longrightarrow +\infty$ and 
    \[\omega_y(d)=O(\log(d))=O(\log(x))=o(y/\log^2(y)),\] 
    since $y\gg\log^\alpha(x)$ for all $\alpha>0$, because $\log(y)\gg\log\log(x)$. It follows that
    \[k=\pi(x)-\pi(z)-\omega_y(d)=\frac{(1-\varepsilon)y}{\log(y)}+O\left(\frac{y}{\log(y)^2}\right).\]
    Besides, since $\log(1-\varepsilon)\ll \log\log(y)$, we have $1-\varepsilon\gg 1/\log(y)$ and furthermore, $y\gg\log^\alpha(x)$ for all $\alpha>0$, so that:
    \[\frac{n^2}{k}=\frac{\log^2(x)}{(1-\varepsilon)y\log(y)}=o\left(\frac{\log^2(x)}{y}\right)=o(1),\]
    so \ref{eq: binom} becomes
    \begin{align*}\log\psi_r(x,y,d)&\geq n\log\left(\frac{k}{n}\right)+n-\frac{1}{2}\log(n)+O(1)\\
     &=\frac{\log(x)}{\log(y)}\log\left(\frac{(1-\varepsilon)y}{\log(x)}\right)+\frac{\log(x)}{\log(y)}-\frac{1}{2}\log\left(\frac{\log(x)}{\log(y)}\right)+O(1)\\
     &=\log(x)-\frac{\log(x)\log\log(x)}{\log(y)}+o\left(\frac{\log(x)\log\log(y)}{\log(y)}\right)\\
     & \qquad (\mbox{since } \log(1-\varepsilon)\ll \log\log(y))\\
     &=\log(x)-\frac{\log(x)\log\log(x)}{\log(y)}(1+o(1))
     \end{align*}
    It follows that
    \begin{align*}\frac{\psi_r(x,y,d)}{\sqrt{x}}&\geq\exp\left(\frac{1}{2}\log(x)-\frac{\log(x)\log\log(x)}{\log(y)}(1+o(1))\right)\\
    &=\exp\left(\frac{1}{2}\log(x)(1+o(1))\right)\longrightarrow +\infty,
    \end{align*}
    since $\log(y)\gg\log\log(x)$. Finally, we have
    \begin{align*}
        \log\left(\frac{\psi_r^*(x,y,d)}{x}\right)&=\log\left(\frac{\psi_r(x,y,d)}{x}\right)+\log\left(1-\frac{\sqrt{x}}{\psi_r(x,y,d)}\right)\\
        &\geq-\frac{\log(x)\log\log(x)}{\log(y)}(1+o(1))+o(1)
    \end{align*}

    Besides, $\psi_r^*(x,y,d)\leq\psi(x,y)$, so we conclude by Lemma \ref{lemma: DeBrujin}.
\end{proof}

\begin{proposition}\label{prop: complexity find smooth}
    Let $\Delta:=|\Delta_{\mathfrak{O}}|$ and $\varepsilon:=\Delta^{1/r_m}/B$. We assume that $B$ is subexponential in $\log(\Delta)$, $\varepsilon<1$ and $\log(1-\varepsilon)\ll \log\log(B)$. Then Algorithm \ref{algo: smooth norm} terminates in expected time
    \begin{multline*}T_{FS}(\Delta,B,r_m)= \exp\Bigg((1+o(1))\frac{\log(\Delta)\log\log(\Delta)}{\log(B)}\\
    +(\sqrt{2}+o(1))\sqrt{\log(B)\log\log(B)}\Bigg).\end{multline*}
\end{proposition}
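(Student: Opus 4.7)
The plan is to bound the expected running time of Algorithm \ref{algo: smooth norm} as the product of the expected number of samples of $a$ and the expected cost per sample. For each sample, the dominant cost is the call to $\SmoothFact{}(N(a+\omega), B)$, which by the analysis at the end of Section \ref{sec: ECM} runs in expected time $L_B(1/2,\sqrt{2}) = \exp((\sqrt{2}+o(1))\sqrt{\log(B)\log\log(B)})$; all other per-sample operations (drawing $a$, the $\gcd$ and squareness checks, and elementary arithmetic on integers of size $O(\Delta)$) are polynomial in $\log(\Delta)$ and therefore subsumed. The rejection filter in the innermost loop (non-coprime or square norms) succeeds with probability bounded below by a positive constant, so it inflates the per-sample cost only by a constant factor and does not affect leading-order asymptotics.

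Next I estimate the success probability, i.e.\ the probability that a uniformly random $a \in \{-\lfloor\sqrt{N(\omega)}\rfloor,\dots,\lfloor\sqrt{N(\omega)}\rfloor\}$ yields an ns-$(B,r_m,\Delta_\mathfrak{O})$-smooth value of $N(a+\omega) = a^2 - ta + N(\omega)$, with $t = \Tr(\omega) \in \{0,1\}$ and $N(\omega) = (t^2+\Delta)/4 = \Theta(\Delta)$. Heuristic \ref{heur: smoothness proba}, applied to $f(X) = X^2 - tX + N(\omega)$, lower bounds this probability (up to a positive constant $C$) by the density of ns-$(B,r_m,\Delta_\mathfrak{O})$-smooth integers in $\{0,\dots,N(\omega)\}$, i.e.\ $\psi_{r_m}^*(N(\omega),B,\Delta)/N(\omega)$. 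I then invoke Lemma \ref{lemma: smooth asymptotics} with $(x,y,r,d) = (N(\omega),B,r_m,\Delta)$: the assumption $\varepsilon = \Delta^{1/r_m}/B < 1$ rearranges to $r_m \geq \log(\Delta)/\log(B)$, giving $r \geq \log(x)/\log(y)$ up to a $\Theta(1)$ difference between $\log(N(\omega))$ and $\log(\Delta)$; $d = O(x)$ is immediate; subexponentiality of $B$ in $\log(\Delta)$ yields $\log(y) \ll \log(x)$ and $\log(y) \gg \log\log(x)$; and $\log(1-\varepsilon) \ll \log\log(B)$ is one of the hypotheses of the proposition. The lemma then delivers a success probability of $\exp(-(1+o(1))\log(\Delta)\log\log(\Delta)/\log(B))$.

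Multiplying the per-sample cost by the reciprocal of this probability produces the claimed expected running time. The main obstacle, beyond verifying the hypotheses above, is keeping careful track of constants and $o(1)$ terms across three different asymptotic regimes (the heuristic reduction to uniform integers, the combinatorial count of ns-smooth integers in Lemma \ref{lemma: smooth asymptotics}, and Lenstra's ECM complexity), and making sure that the constant-probability rejections in the innermost loop and the constant-factor slack $C$ from Heuristic \ref{heur: smoothness proba} are genuinely absorbed into the $(1+o(1))$ and $(\sqrt{2}+o(1))$ coefficients of the two terms in the exponent.
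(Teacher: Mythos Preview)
Your proof is correct and follows essentially the same approach as the paper: bound the success probability via Heuristic~\ref{heur: smoothness proba} and Lemma~\ref{lemma: smooth asymptotics}, take the per-sample cost to be $L_B(1/2,\sqrt{2})$ from the ECM analysis, and multiply. You are in fact slightly more careful than the paper in discussing the innermost rejection loop and in noting that $N(\omega)<\Delta$ makes the hypothesis $r\geq\log(x)/\log(y)$ of Lemma~\ref{lemma: smooth asymptotics} hold once $\varepsilon<1$.
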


\begin{proof}
    By Heuristic \ref{heur: smoothness proba} (since $\varepsilon<1$ i.e. $r_m\geq \log(\Delta)/\log(B)$), the probability to find an ns-$(B,r_m,\Delta)$-smooth value of $N(a+\omega)$ stisfies
    \[\mathbb{P}(B, r_m,\Delta)\geq C\cdot\frac{\psi_{r_m}^*(N(\omega),B,\Delta)}{N(\omega)},\]
    where $C>0$ is a constant. Since $N(\omega)=(\Delta+t^2)/2$ with $t:=\Tr(\omega)=\Delta\mod 2$ and $B$ is subexponential in $\log(\Delta)$, we have $\Delta=O(N(\omega))$, $N(\omega)=O(\Delta)$, $\log(B)\ll\log(N(\omega))$ and $\log(B)\gg\log\log(N(\omega))$. We also have $r_m\geq \log(\Delta)/\log(B)$ and $\log(1-\varepsilon)\ll \log\log(B)$, so we may apply Lemma \ref{lemma: smooth asymptotics}:
    \begin{align*}
        \log\left(\frac{\psi_{r_m}^*(N(\omega),B,\Delta)}{N(\omega)}\right)&\sim -\frac{\log(N(\omega))\log\log(N(\omega))}{\log(B)}\\
        &\sim\frac{\log(\Delta)\log\log(\Delta)}{\log(B)}(1+o(1)).
    \end{align*}
    
    Hence, Algorithm \ref{algo: smooth norm} terminates in expected time 
\begin{align*}
T_{FS}(\Delta, B, r_m)&=\frac{L_B(1/2,\sqrt{2})}{\mathbb{P}(B, r_m, \Delta)}=\exp\Bigg((1+o(1))\frac{\log(\Delta)\log\log(\Delta)}{\log(B)}\\
&\qquad +(\sqrt{2}+o(1))\sqrt{\log(B)\log\log(B)}\Bigg).
\end{align*}
\end{proof}

\subsubsection{Complexity of the tree filling algorithm (Algorithm \ref{algo: tree filling algorithm})}

\begin{proposition}\label{prop: tree filling complexity}
With inputs $B>0$, an imaginary quadratic order $\mathfrak{O}$ with $|\Delta_\mathfrak{O}|B<p$, primes $\ell_1,\cdots,\ell_s\leq B$ splitting in $\mathfrak{O}$ and an oracle \IsOrientable{\mathfrak{O}} for Problem \ref{prob:OOrientingDecision} running in constant time, Algorithm \ref{algo: tree filling algorithm} runs in time
\[O\left(2^sB^2\polylog(B)\log(p)M(p)\right),\]
where $M(p)$ is the time complexity of the multiplication in $\F_p$. It also uses $O(2^s\log(p))$ bits of memory.
\end{proposition}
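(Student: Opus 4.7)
The plan is to carry out a straightforward counting argument: bound the total number of nodes in the tree, then multiply by the per-node cost.

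First, I would count the nodes. Since $|\Delta_\mathfrak{O}| B < p$, Corollary~\ref{cor: unique orientation}(ii) forbids descending $\ell_i$-isogenies from landing on an $\mathfrak{O}$-orientable codomain, and since each $\ell_i$ splits in $\mathfrak{O}$, Proposition~\ref{prop: volcano structure}(i) gives exactly two horizontal $\ell_i$-isogenies from any $\mathfrak{O}$-oriented node. Consequently the tree built by Algorithm~\ref{algo: tree filling algorithm} is a complete binary tree of depth $s$, so there are at most $2^i$ nodes at depth $i$ and $\sum_{i=0}^{s} 2^i = O(2^s)$ nodes in total.

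Next I would account for the work done when processing a single node $j(E_{i-1})$ at depth $i-1$. By Section~\ref{sec: j-invariants}, evaluating the specialized modular polynomial $\Phi_{\ell_i}(j(E_{i-1}), Y)$ and then finding all its roots in $\F_{p^2}$ together costs $\tilde O(\ell_i^2 \log p)$ operations over $\F_{p^2}$, i.e.\ $O(\ell_i^2 \polylog(\ell_i)\log(p) M(p))$ bit operations. There are at most $\ell_i+1 \leq B+1$ resulting candidate roots, and each calls the oracle \IsOrientable{\mathfrak{O}} in constant time; this $O(\ell_i)$ contribution is dominated by the modular polynomial cost. Since $\ell_i \leq B$, the per-node cost is $O(B^2 \polylog(B) \log(p) M(p))$.

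Combining the two bounds and summing over the $O(2^s)$ nodes of the tree gives total time
\[
\sum_{i=1}^{s} 2^{i-1} \cdot O\bigl(B^2 \polylog(B) \log(p) M(p)\bigr) = O\bigl(2^s B^2 \polylog(B) \log(p) M(p)\bigr),
\]
as claimed. For the memory bound, observe that the tree stores only a single $j$-invariant in $\F_{p^2}$ per node (the isogenies themselves are not retained), and each such element takes $O(\log p)$ bits; with $O(2^s)$ nodes the total space is $O(2^s \log p)$ bits. No step here is subtle: the only thing to verify carefully is that no descending or non-orientable branches inflate the $2^s$ node count beyond what the binary structure dictates, which is precisely what Corollary~\ref{cor: unique orientation} and Proposition~\ref{prop: volcano structure} guarantee under the hypothesis $|\Delta_\mathfrak{O}| B < p$.
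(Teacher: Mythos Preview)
Your proof is correct and follows essentially the same counting argument as the paper: bound the number of nodes at each level by $2^{i-1}$, cost each node's modular polynomial and root-finding step at $O(\ell_i^2\polylog(\ell_i)\log(p)M(p))$, and sum. Your additional justification that the tree really is binary (via Proposition~\ref{prop: volcano structure} and Corollary~\ref{cor: unique orientation}) is a welcome clarification that the paper's proof leaves implicit.
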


\begin{proof}
    Filling-in tree $\mathcal{T}$ in Algorithm \ref{algo: tree filling algorithm} costs for all $1\leq i\leq s$, $2^{i-1}$ calls to $\IsOrientable{\mathfrak{O}}$ and the computation of $2^{i-1}$ sets of $j$-invariants $\ell_i$-isogenous to the same elliptic curve. Each call to $\IsOrientable{\mathfrak{O}}$ costs $O(1)$ and each $j$-invariants computation costs $O(\ell_i^2\polylog(\ell_i)\log(p))$ operations over $\F_{p^2}$ by Section \ref{sec: j-invariants}. Arithmetic operations over $\F_{p^2}$ cost $O(M(p))$. Hence, the total cost of filling tree $\mathcal{T}$~is
    \begin{align*}T_{tree}(s,B,p)&=\sum_{i=1}^{s}2^{i-1}O\left(\ell_i^2\polylog(\ell_i)\log(p)M(p)\right)\\
    &=\sum_{i=1}^s 2^{i-1}O(B^2\polylog(B)\log(p)M(p))\\
    &=O\left(2^sB^2\polylog(B)\log(p)M(p)\right).\end{align*}
    The memory used by Algorithm \ref{algo: tree filling algorithm} is the size of tree $\mathcal{T}$, which contains $\sum_{i=1}^s 2^{i-1}=2^s-1$ $j$-invariants defined over $\F_{p^2}$. Each $j$-invariant takes $2\log(p)$ bits to store, so the algorithm uses $O(2^s\log(p))$ bits of memory.
\end{proof}

\subsubsection{Complexity of the search to decision reduction algorithm (algorithm \ref{algo: reduction search to decision})}

\begin{theorem}\label{thm: complexity}
    Let $\Delta:=|\Delta_{\mathfrak{O}}|$. Then, with smoothness parameters 
    \[B:=L_\Delta\left(\frac{1}{2},\frac{\sqrt{2}}{2}\right), \quad r_m:=\left\lceil \sqrt{\frac{2\log(\Delta)}{\log\log(\Delta)}}\right\rceil+1 \quad \mbox{and} \quad D:=O(\log(p))\]
    and provided $B\Delta<p$, Algorithm \ref{algo: reduction search to decision} terminates in time 
    \[L_\Delta(1/2,\sqrt{2})\log(p)M(p),\]
    where $M(p)$ is the time complexity of multiplying over $\mathbb{F}_p$. It also requires 
    \[O\left(2^{\sqrt{2\log(\Delta)/\log\log(\Delta)}}\log(p)\right)\]
    bits of memory.
\end{theorem}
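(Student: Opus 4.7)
The plan is to decompose Algorithm \ref{algo: reduction search to decision} into its constituent subroutines and combine the complexity estimates already established for each. With $B = L_\Delta(1/2,\sqrt{2}/2)$ and $r_m = \lceil\sqrt{2\log(\Delta)/\log\log(\Delta)}\rceil + 1$, I will show that \FindSmoothGen{} and \TreeFill{} dominate, each fitting within $L_\Delta(1/2,\sqrt{2})\log(p)M(p)$, while every other component is polynomial in $\log(p)$ and $\log(\Delta)$ and is therefore absorbed.

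First I would verify the hypotheses of Proposition \ref{prop: complexity find smooth}. Writing $u := \sqrt{2\log(\Delta)/\log\log(\Delta)}$, so that $r_m \sim u$ and $\log(B) = (u/2)\log\log(\Delta)$, one computes $\log(\Delta)/r_m = \log(B)\bigl(1 - 1/u + O(u^{-2})\bigr)$, whence $\log(\varepsilon) = -\tfrac{1}{2}\log\log(\Delta) + O(1)$. In particular $\varepsilon \to 0$ and $\log(1-\varepsilon) \to 0 \ll \log\log(B)$, so the proposition applies. Substituting $\log(B) = (\sqrt{2}/2)\sqrt{\log(\Delta)\log\log(\Delta)}$ into the given exponent, the dominant term evaluates to $(\sqrt{2}+o(1))\sqrt{\log(\Delta)\log\log(\Delta)}$, while $\sqrt{\log(B)\log\log(B)} = O((\log\Delta)^{1/4}(\log\log\Delta)^{3/4})$ is $o(\sqrt{\log(\Delta)\log\log(\Delta)})$, so \FindSmoothGen{} runs in expected time $L_\Delta(1/2,\sqrt{2})$.

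For \TreeFill{}, Proposition \ref{prop: tree filling complexity} gives $O(2^s B^2 \polylog(B)\log(p)M(p))$ with $s \leq r_m/2$. Since $s\log 2 = O(\sqrt{\log(\Delta)/\log\log(\Delta)}) = o(\sqrt{\log(\Delta)\log\log(\Delta)})$ and $\log(B^2) = \sqrt{2}\sqrt{\log(\Delta)\log\log(\Delta)}$, the factor $2^s B^2$ collapses to $L_\Delta(1/2,\sqrt{2})$. The remaining pieces are dominated: sorting the leaves of the two trees to find a match costs $O(2^s s\log p)$; recovering each $\psi_i$ from consecutive $j$-invariants via Section \ref{sec: isogenies from j-invariants} totals $\tilde{O}(r_m B^2 \log p)$ operations over $\mathbb{F}_{p^2}$; the trace of $\psi$, of degree $O(\Delta)$, is computed by a Schoof-style CRT over small $\ell$-torsion in $\polylog(p,\Delta)$ time; and \EfficientRep{} on input $\psi$ with $d \leq 2\Delta$ and $D = O(\log p)$ contributes a $\polylog(p,\Delta)$ overhead by Proposition \ref{prop: complexity efficient rep} (each of the $O(\log d)$ evaluations of $\psi$ being a chain of $r_m$ small-degree isogenies over an extension of degree $O(\log p)$). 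The memory is dominated by the two trees, each storing $O(2^s) = O(2^{r_m/2})$ $j$-invariants of $O(\log p)$ bits, yielding the claimed $O(2^{\sqrt{2\log(\Delta)/\log\log(\Delta)}}\log(p))$ bits total.

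The main obstacle is the asymptotic bookkeeping in the second paragraph: one must confirm simultaneously that the choice $r_m \sim u$ keeps $\varepsilon$ far enough below $1$ for Proposition \ref{prop: complexity find smooth} to apply (i.e.\ $\log(1-\varepsilon) \ll \log\log(B)$), and that the dominant subexponential term and the secondary term balance precisely into the clean constant $\sqrt{2}$ in $L_\Delta(1/2,\sqrt{2})$; any miscount of the power of $\log\log(\Delta)$ in $r_m$ would destroy this balance.
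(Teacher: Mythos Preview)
Your proof follows essentially the same decomposition and balancing argument as the paper: split into $T_{FS}$, $T_{tree}$, $T_{iso}$, $T_{trace}$, $T_{rep}$, invoke Propositions \ref{prop: complexity find smooth} and \ref{prop: tree filling complexity}, and check that with the stated $B$ and $r_m$ everything collapses to $L_\Delta(1/2,\sqrt{2})\log(p)M(p)$; your verification of the $\varepsilon$-hypothesis for Proposition \ref{prop: complexity find smooth} is correct and slightly more explicit than the paper's.

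One slip to fix: your claim that the trace computation and \EfficientRep{} run in $\polylog(p,\Delta)$ time is not right. Evaluating $\psi = \psi_r\circ\cdots\circ\psi_1$ on a single point costs $O(\sum_i \ell_i) = O(r_m B)$ field operations, and $B = L_\Delta(1/2,\sqrt{2}/2)$ is subexponential, not polylogarithmic (calling the $\psi_i$ ``small-degree'' is the misleading step). The paper accordingly obtains $T_{trace}, T_{rep} = O(r_m B \log^3(\Delta) M(p))$. This does not affect your conclusion, since $r_m B = L_\Delta(1/2,\sqrt{2}/2+o(1))$ is still dominated by $L_\Delta(1/2,\sqrt{2})$, but the intermediate assertion should be corrected.
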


\begin{proof}
We already have proved the termination of Algorithm \ref{algo: reduction search to decision} when $B\Delta<p$. This is a consequence of Lemma \ref{lemma: horizontal decomp general}, Lemma \ref{lemma: explicit horizontal endomorphism} and Heuristic \ref{heur: smoothness proba} (which prove that \TreeFill{} and \FindSmoothGen{} terminate).

On the whole, the total time complexity of Algorithm \ref{algo: reduction search to decision} is
\[T(B,\Delta,r_m,p)=T_{FS}+2T_{tree}+T_{iso}+T_{trace}+T_{rep},\]
where:
\begin{itemize}
    \item $T_{FS}$ is the execution time of \FindSmoothGen{} (Algorithm \ref{algo: smooth norm}), given by Proposition \ref{prop: complexity find smooth}:
    \begin{multline*}T_{FS}(\Delta,B,r_m)= \exp\Bigg((1+o(1))\frac{\log(\Delta)\log\log(\Delta)}{\log(B)}\\
    +(\sqrt{2}+o(1))\sqrt{\log(B)\log\log(B)}\Bigg).\end{multline*}

    \item $T_{tree}$ is the execution time of \TreeFill{} (Algorithm \ref{algo: tree filling algorithm}), given by Proposition \ref{prop: tree filling complexity}:
    \[T_{tree}(B,s,p)=O\left(2^sB^2\polylog(B)\log(p)M(p)\right).\]
    with $s=r_m/2+O(1)$.

    \item $T_{iso}$ is the time taken in lines \ref{line: recover isogeny 1} and \ref{line: recover isogeny 2} of Algorithm \ref{algo: reduction search to decision} to recover the chain of $\ell_i$-isogenies $\psi_i:E_{i-1}\longrightarrow E_i$, given the sequence of $j$-invariants $j(E_0)=j(E),j(E_1),\cdots,j(E_r)=j(E)$. By Section \ref{sec: isogenies from j-invariants}, recovering an $\ell_i$-isogeny from the $j$-invariants of its domain and codomain costs $O(\ell_i^2\polylog(\ell_i)\log(p))$ operations over $\F_{p^2}$. Hence, we have
    \[T_{iso}=O(r_m B^2\polylog(B)\log(p)M(p))\]

    \item $T_{trace}$ is the time needed to compute the trace of $\psi=\psi_r\circ\cdots\circ\psi_1$. We use Schoof's algorithm \cite[Section~5]{Schoof_1995}. Namely, we look for primes $p_1, \cdots, p_t$ such that $\prod_{i=1}^t p_i>4\sqrt{\deg(\psi)}$ and evaluate $\psi$ on $E[p_i]$ to find $\tau_i\in\Z/p_i\Z$ such that $\psi^2-[\tau_i]\psi+[\deg(\psi)]$ is zero on $E[p_i]$ and recover $\Tr(\psi)$ by solving $\Tr(\psi)\equiv \tau_i \mod p_i$ for all $i\in\{1,\cdots, t\}$ via Chinese remainder theorem. Since $\deg(\psi)=N(\theta)\leq \Delta$, we can choose $t=O(\log(\Delta))$ and $p_i=O(\log(\Delta))$. Hence, the dominant cost is the evaluation via $\psi$ of $O(\log(\Delta))$ points all defined over an extension of degree $O(\log(\Delta))$ of $\mathbb{F}_{p^2}$ (by Lemma \ref{lemma: field definition torsion}). This cost amounts to
    \[T_{trace}(B,r_m,\Delta,p)=O(r_mB\log^3(\Delta)M(p)).\]

    \item $T_{rep}$ is the running time of \EfficientRep{} (Algorithm \ref{algo: efficient representation}). Since $\deg([\epsilon]\circ\psi-[a])=N(\omega)\leq(\Delta+1)/4$, we can find a $D$-powersmooth number coprime with $\deg([\epsilon]\circ\psi-[a])$ when $D=O(\log(\Delta))$ (line \ref{line: coprime with degree} of Algorithm \ref{algo: efficient representation}). Hence, by Proposition \ref{prop: complexity efficient rep}, the dominant cost of the call to \EfficientRep{} is given by $O(\log(\Delta))$ evaluations of $\psi$ on points defined over an extension of degree $O(\log(\Delta))$ of $\mathbb{F}_{p^2}$, which amounts to
    \[T_{rep}(\Delta,B,r_m,p)=O(r_m B\log^3(\Delta)M(p)).\]
\end{itemize}

It follows that:
\begin{align*}T(B,r_m,\Delta,p)&=T_{FS}+2T_{tree}+T_{iso}+T_{trace}+T_{rep}\\
&=\exp\Bigg((1+o(1))\frac{\log(\Delta)\log\log(\Delta)}{\log(B)}\\
&\qquad+(\sqrt{2}+o(1))\sqrt{\log(B)\log\log(B)}\Bigg)\\
&\qquad+M(p)\log(p)\exp\left(\frac{\log(2)r_m}{2}+2\log(B)\right)
\end{align*}

But by Proposition \ref{prop: complexity find smooth}, we have $r_m=\log(\Delta)/\log(B\varepsilon)$ with $\log(1-\varepsilon)\ll \log\log(B)$. We can impose that $\varepsilon\longrightarrow 0$, so that $\log(1-\varepsilon)\ll \log\log(B)$ and that  $\log(\varepsilon)\ll\log(B)$, so that $r_m\sim\log(\Delta)/\log(B)$. Heuristically, the quantity $T(\Delta,B,r_m,p)$ is minimal when the arguments of the two exponentials are close, i.e. when 
\[\frac{\log(\Delta)\log\log(\Delta)}{\log(B)}\simeq 2\log(B),\]
the other terms being negligible. Hence, we choose
\[B=\exp\left(\frac{\sqrt{2}}{2}\sqrt{\log(\Delta)\log\log(\Delta)}\right)=L_\Delta\left(\frac{1}{2},\frac{\sqrt{2}}{2}\right),\]
so that
\[T(B,r_m,\Delta,p)=M(p)\log(p)L_\Delta\left(\frac{1}{2},\sqrt{2}\right).\]
and
\begin{align*}r_m&=\sqrt{\frac{2\log(\Delta)}{\log\log(\Delta)}}\left(1+\frac{\sqrt{2}\log(\varepsilon)}{\sqrt{\log(\Delta)\log\log(\Delta)}}\right)^{-1}\\
&=\sqrt{\frac{2\log(\Delta)}{\log\log(\Delta)}}-\frac{2\log(\varepsilon)}{\log\log(\Delta)}.
\end{align*}
Hence, we can set $r_m:=\lceil \sqrt{2\log(\Delta)/\log\log(\Delta)}\rceil+1$, so that $\log(\varepsilon)=O(\log\log(\Delta))=o(\log(B))$. 

The space complexity is dominated by the trees $\mathcal{T}_1$ and $\mathcal{T}_2$, so Algorithm \ref{algo: reduction search to decision} uses 
\[O(2^{r_m/2}\log(p))=O\left(2^{\sqrt{2\log(\Delta)/\log\log(\Delta)}}\log(p)\right)\]
bits of memory by Proposition \ref{prop: tree filling complexity}. 
\end{proof}

\begin{corollary}
    Given an imaginary quadratic order $\mathfrak{O}$ of discriminant $\Delta_{\mathfrak{O}}$ and a prime $p>L_{|\Delta_\mathfrak{O}|}(1/2,\sqrt{2}/2)|\Delta_{\mathfrak{O}}|$, then, over $\F_{p^2}$ the $\mathfrak{O}$-orienting Problem (Problem~\ref{prob:OOrienting}) reduces to the Decision $\mathfrak{O}$-orienting Problem (Problem \ref{prob:OOrientingDecision}) in time 
    \[L_{|\Delta_\mathfrak{O}|}(1/2,\sqrt{2})\log(p)M(p)\] 
    using 
    \[O\left(2^{\sqrt{2\log(|\Delta_\mathfrak{O}|)/\log\log(|\Delta_\mathfrak{O}|)}}\log(p)\right)\] 
    bits of memory, $M(p)$ being the time complexity of multiplication over $\F_{p}$.
\end{corollary}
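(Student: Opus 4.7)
The plan is to observe that this corollary is essentially a direct packaging of Theorem~\ref{thm: complexity} with the parameters already optimized in that theorem. First I would verify that the hypothesis on $p$ matches the hypothesis of Theorem~\ref{thm: complexity}: with the choice $B := L_{|\Delta_\mathfrak{O}|}(1/2, \sqrt{2}/2)$, the condition $B|\Delta_\mathfrak{O}| < p$ required by Theorem~\ref{thm: complexity} is exactly the hypothesis $p > L_{|\Delta_\mathfrak{O}|}(1/2, \sqrt{2}/2)|\Delta_\mathfrak{O}|$ given in the corollary.

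Next I would invoke Algorithm~\ref{algo: reduction search to decision} with the parameters specified in Theorem~\ref{thm: complexity}, namely
\[
B := L_{|\Delta_\mathfrak{O}|}\!\left(\tfrac{1}{2},\tfrac{\sqrt{2}}{2}\right), \quad r_m := \left\lceil\sqrt{\tfrac{2\log(|\Delta_\mathfrak{O}|)}{\log\log(|\Delta_\mathfrak{O}|)}}\right\rceil + 1, \quad D := O(\log(p)).
\]
Algorithm~\ref{algo: reduction search to decision} calls $\IsOrientable{\mathfrak{O}}$ as an oracle, so any instance of the $\mathfrak{O}$-Orienting Problem reduces to polynomially many queries to the Decision $\mathfrak{O}$-Orienting Problem, together with the additional work analyzed in the theorem. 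The output is an efficient representation (in the sense of Definition~\ref{def: efficient rep}) of an endomorphism realizing an $\mathfrak{O}$-orientation of $E$, which is exactly the desired output of Problem~\ref{prob:OOrienting}.

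Finally, the time complexity $L_{|\Delta_\mathfrak{O}|}(1/2,\sqrt{2})\log(p)M(p)$ and the memory bound $O(2^{\sqrt{2\log(|\Delta_\mathfrak{O}|)/\log\log(|\Delta_\mathfrak{O}|)}}\log(p))$ stated in the corollary are the two explicit bounds produced at the end of the proof of Theorem~\ref{thm: complexity}, so they transfer without modification. There is no real obstacle here: all the analytic work (the smoothness heuristic, the balancing of the two exponential terms in $T_{FS}$ and $T_{tree}$ to pick the optimal $B$, and the size of the meet-in-the-middle trees) has already been discharged in Theorem~\ref{thm: complexity}. The corollary is obtained simply by re-expressing that theorem in the ``search-to-decision reduction'' language used in the statement of Problem~\ref{prob:OOrienting} and Problem~\ref{prob:OOrientingDecision}.
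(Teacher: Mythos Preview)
Your proposal is correct and matches the paper's treatment: the corollary is stated immediately after Theorem~\ref{thm: complexity} with no separate proof, precisely because it is the direct restatement of that theorem with the optimized parameters $B=L_{|\Delta_\mathfrak{O}|}(1/2,\sqrt{2}/2)$, $r_m$, and $D$ already chosen there. Your check that the hypothesis $p>L_{|\Delta_\mathfrak{O}|}(1/2,\sqrt{2}/2)|\Delta_\mathfrak{O}|$ coincides with the condition $B\Delta<p$ of the theorem is exactly the only thing one needs to verify.
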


\section{$\mathfrak{O}$-orienting problem for quaternion orders}\label{sec: quaternion}

Isogeny problems can often be translated to quaternion problems via the Deuring correspondence, and in many cases the quaternion problems are easier to solve. In this section we consider the quaternion analogue of the $\mathfrak{O}$-Orienting Problem stated as follows:

\begin{problem}[Quaternion Order Embedding Problem]\label{prob: quat}
    Given a maximal quaternion order $\mathcal{O} \subset B_{p, \infty}$ and an imaginary quadratic order $\mathfrak{O}$ where an $\mathfrak{O}$-orientation of $\mathcal{O}$ exists, find the orientation.
\end{problem}
Similarly to the curve setting, we define an $\mathfrak{O}$-orientation of $\mathcal{O}$ to be an embedding $\iota: \mathfrak{O} \hookrightarrow \mathcal{O}$  which cannot be extended to a superorder of $\mathfrak{O}$, also known as an optimal embedding \cite[Chapter 30]{Voight}.


In this section we present a general algorithm, and analyse its complexity, noting special cases. For complexity analysis we assume an efficient factorization oracle exists, however we provide a practical alternative for running the algorithm without such an oracle. For embedding small discriminant quadratic orders $\mathfrak{O}$, our algorithm improves the state of the art being efficient up to $\disc(\mathfrak{O}) = O(p)$. 

Before moving on to the actual algorithms we give a brief technical overview of the main idea. First we compute a short prime norm $N$ ($\approx \sqrt{p}$) connecting ideal between a quaternion order $\mathcal{O'}$ isomorphic to $\mathcal{O}$ and a special extremal order.
Our goal is to compute an element of prescribed trace and norm in $\mathcal{O'}$ and then one can easily construct an element with said trace and norm in $\mathcal{O}$ as well. For simplicity assume that the prescribed trace is 0. The trace 0 part of $\mathcal{O'}$ is a rank 3 lattice and one can compute the Hermite Normal Form (HNF) of this lattice. This means that one has a basis of the form $e_{11}i+e_{12}j+e_{13}k,e_{22}j+e_{23}k,e_{33}k$ and even though $e_{ij}$ are not likely to be integers, their denominator is a divisor of $2N$. When looking for an element of trace 0 and norm smaller than $p$ the coefficients of this element with respect to this HNF basis will have a very specific structure. Namely the coefficient of $e_{11}i+e_{12}j+e_{13}k$ will be smaller than $p$ in absolute value and thus can be easily determined by looking at the norm modulo $p$. Then one only has to work out the two other coefficients which is equivalent to solving a binary quadratic form where the quadratic part is positive definite. This can then essentially be reduced to Cornacchia's algorithm \cite{sawilla2008new}. We can extend this to filter out imprimitive solutions.

\subsection{Finding General Embeddings}\label{ssec: quat_findingembeddings}

First we present an algorithm for finding embeddings, and in the next section we use this to define orientations. Suppose we are given a maximal quaternion order $\mathcal{O} \subset B_{p, \infty}$ in terms of a $\Z$-basis, and an imaginary quadratic order $\mathfrak{O} = \mathbb{Z}[\omega]$, by generator $\omega$ of reduced trace $t$ and reduced norm $d$. 

We start with an observation: suppose an embedding $\iota: \mathbb{Z}[\omega] \hookrightarrow \mathcal{O}$ exists and let $\alpha = \iota(\omega)$. Since $\omega^2 - t \omega + d = 0$ we must also have $\alpha^2 - t \alpha + d = 0$. Hence $\alpha$ also has trace $t$ and norm $d$. Finding any element $\alpha$ of norm $d$ and trace $t$ is enough to define an embedding $\iota$, solving Problem \ref{Problem 4}. This is the approach we take in Algorithm \ref{New Quaternion Search Algorithm}, finding $\alpha \in \mathcal{O}$ of a given norm and trace. We make the assumption $p \neq 2$ and conventionally use $1,i,j,k$ as a basis of $B_{p, \infty}$ with $i^2 = -q$ and $j^2 = -p$. If $p\equiv3\pmod{4}$ we take $q = 1$. If $p\equiv 5\pmod{8}$ we take $q = 2$. If $p\equiv 1\pmod{8}$ we take $q$ to be a prime $q\equiv 3\pmod{4}$ such that $p$ is not a quadratic residue modulo $q$. While $p \equiv 3 \mod 4$ is the most relevant for isogeny based cryptography, we consider general $p$. We fix a maximal order $\mathcal{O}_0$ in the following way:
\begin{proposition}\cite[Proposition 5.2]{pizer1980algorithm}\label{O_0}
    The following definitions give a maximal order in $B_{p, \infty}$ for any $p \neq 2$:
    \[
        \mathcal{O}_0 = 
        \begin{cases}
            \mathbb{Z}[\frac{1+j}{2}, \frac{i+k}{2}, j, k] & \text{if } p \equiv 3 \mod 4 \\
            \mathbb{Z}[\frac{1+j+k}{2}, \frac{i+2j+k}{4}, j, k] & \text{if } p \equiv 5 \mod 8 \\
            \mathbb{Z}[\frac{1+i}{2}, \frac{i + ck}{q}, \frac{j+k}{2}, k] & \text{if } p \equiv 1 \mod 8 \\
        \end{cases}
    \]
    where $c$ is an integer such that $q$ divides $c^2 p + 1$ where $q$ and $c$ exist by \cite[Proposition 1]{eisentrager2018supersingular}.
\end{proposition}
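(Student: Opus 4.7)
The plan is a direct verification in three cases, following the standard approach for checking maximality of a quaternion order: show the proposed set is a $\mathbb{Z}$-subring of rank $4$, then compute its reduced discriminant and check that it equals $p$, the reduced discriminant of $B_{p,\infty}$.

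First I would establish the common setup. For each of the three cases, list the four proposed generators $e_1,e_2,e_3,e_4$ and verify they are $\mathbb{Q}$-linearly independent in $B_{p,\infty}$ by writing their coordinates with respect to the standard basis $1,i,j,k$ (where $i^2=-q,\ j^2=-p,\ k=ij$) and observing that the change-of-basis matrix is triangular with nonzero diagonal, of determinant $1/4,\ 1/8$, and $1/(4q)$ respectively. Consequently $\mathcal{O}_0$ is a full $\mathbb{Z}$-lattice of rank $4$ in $B_{p,\infty}$.

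Next I would verify that $\mathcal{O}_0$ is closed under multiplication in each case, by computing all products $e_ae_b$ for $1\le a,b\le 4$ and expressing them as $\mathbb{Z}$-linear combinations of the generators. For instance, when $p\equiv 3\pmod 4$, a representative computation is
\[
\left(\tfrac{1+j}{2}\right)^{\!2}=\tfrac{1+2j-p}{4}=\tfrac{1+j}{2}-\tfrac{p+1}{4}\in\mathcal{O}_0,
\]
which is integral precisely because $p\equiv 3\pmod 4$. The congruence hypotheses $p\equiv 5\pmod 8$ and $p\equiv 1\pmod 8$ (together with $q\mid c^2p+1$) are used in exactly the same fashion in the other two cases to ensure the resulting coefficients are integers. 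This step is essentially a bookkeeping exercise; the only real input is that the congruences match the denominators appearing in the generators.

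Finally I would compute the reduced discriminant. With respect to the standard basis $1,i,j,k$, the Gram matrix $M=(\mathrm{Trd}(e_a\bar e_b))_{a,b}$ is $\mathrm{diag}(2,2q,2p,2pq)$, with determinant $16p^2q^2$. Under a change of basis with transition matrix $T$, the new Gram matrix has determinant $(\det T)^2\cdot 16p^2q^2$, and the reduced discriminant is the square root of its absolute value. Substituting the three transition determinants computed above yields reduced discriminants
\[
\sqrt{16p^2\cdot\tfrac{1}{16}}=p,\quad \sqrt{16p^2\cdot\tfrac{1}{64}}\cdot 2 = p,\quad \sqrt{16p^2q^2\cdot\tfrac{1}{16q^2}}=p,
\]
in the three cases. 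Since any order containing $\mathcal{O}_0$ has reduced discriminant dividing that of $\mathcal{O}_0$ and the ambient algebra has reduced discriminant exactly $p$, no strict superorder exists, so $\mathcal{O}_0$ is maximal.

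The main obstacle is the bookkeeping in the case $p\equiv 1\pmod 8$: closure of the generator $\tfrac{i+ck}{q}$ under multiplication with the others requires repeatedly using $q\mid c^2p+1$ to clear denominators, and care must be taken to verify that, for instance, $\bigl(\tfrac{i+ck}{q}\bigr)^2=-\tfrac{c^2p+1}{q}\cdot\tfrac{1}{q}\cdot(\text{unit correction})$ expresses as an integer combination of the four generators. The other two cases are lighter but involve the same style of calculation.
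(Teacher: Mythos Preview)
The paper does not supply its own proof of this proposition: it is quoted verbatim from Pizer's 1980 paper and used as a black box. Your proposal is the standard direct verification (lattice of rank $4$, ring closure, reduced discriminant equal to $p$), and the overall strategy is correct and exactly what one would do to reprove Pizer's result from scratch.

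Two small points worth tightening. First, your displayed discriminant computation in the middle case is garbled: for $p\equiv 5\pmod 8$ one has $q=2$, so the Gram determinant for $1,i,j,k$ is $16p^2q^2=64p^2$, and with $(\det T)^2=1/64$ one gets $\sqrt{64p^2/64}=p$ directly; the stray ``$\cdot 2$'' you inserted is a patch for having written $16p^2$ instead of $16p^2q^2$. Second, in the $p\equiv 1\pmod 8$ case, the check that $\bigl(\tfrac{i+ck}{q}\bigr)^2=\tfrac{-q-c^2pq}{q^2}=-\tfrac{c^2p+1}{q}\in\Z$ is immediate from $q\mid c^2p+1$, with no ``unit correction'' needed; the genuinely tedious products are the mixed ones like $\tfrac{1+i}{2}\cdot\tfrac{i+ck}{q}$, where one must also use $q\equiv 3\pmod 4$ to handle the denominator $2$. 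None of this affects the validity of your plan.
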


We address arbitrary trace in Remark~\ref{rmk:arbitrary_trace} and  Algorithm~\ref{New Quaternion Search Algorithm} has no restrictions on trace. However, for simplicity we first describe the algorithm under the assumption that the trace of $\omega$ is zero:

\begin{description}


    \item[Step 1: Compute HNF] Put the basis of $\mathcal{O}$ into column-style Hermite normal form (HNF). We denote the basis vectors $e_0, e_1, e_2, e_3$. Then we can write $\mathcal{O}$ as:
\begin{equation}\label{hnf1}\begin{split}
    \mathcal{O} = \langle e_0, e_1, e_2, e_3 \rangle_{\mathbb{Z}} = \langle & e_{00} + e_{01}i + e_{02}j + e_{03}k, \\
    & e_{11}i + e_{12}j + e_{13}k, \\
    & e_{22}j + e_{23}k, \\
    & e_{33}k \rangle_{\mathbb{Z}}
\end{split}\end{equation}
with coefficients $e_{mn} \in \mathbb{Q}$. For example see the orders in Proposition \ref{O_0} above. We know the basis is full rank, so $e_{nn} \neq 0$ for $n=0,1,2,3$, and we prove some additional properties:
\begin{lemma}\label{basis_properties} Given a maximal order $\mathcal{O} \subset B_{p, \infty}$ with a basis in the above form, the following properties hold (up to finding another basis in right form):
    \begin{enumerate}
        \item $e_{mn} \geq 0$ for all $n,m$
        \item For all $e_{mn}$, denominators divide $K \cdot N(I)$ where $K = 2, 4$ or $2q$ depending on whether $p \equiv 3 \mod 4$, or $\equiv 5 \mod 8$ or $\equiv 1 \mod 8$ respectively, and where $I:= N\mathcal{O}\mathcal{O}_0, N := [\mathcal{O} : \mathcal{O} \,\cap \,\mathcal{O}_0]$ is the connecting ideal from $\mathcal{O}_0$
        \item $e_{00} = \frac{1}{2}$
        \item $e_{22} e_{33} \leq N(I)$
        \item $e_{01} = 0$ or $e_{01} = 1/ (2 K e_{22}e_{33})$ where $K$ is defined in (2)
    \end{enumerate}
\end{lemma}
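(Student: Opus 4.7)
The proof splits into five independent arguments, all tied to one computational identity: comparing the reduced discriminant $\operatorname{disc}(\mathcal{O}) = p^2$ of a maximal order with $\operatorname{disc}(\Z\langle 1, i, j, k\rangle) = 16 q^2 p^2$ forces the change-of-basis determinant to satisfy $e_{00} e_{11} e_{22} e_{33} = 1/(4q)$, which equals $1/(2K)$ in all three residue cases of Proposition~\ref{O_0}. I would invoke this identity repeatedly below.

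For (1), I would apply the standard HNF normalization: adding an integer multiple of $e_n$ to $e_m$ for $m < n$ only affects coordinates $\geq n$ of $e_m$, so beginning at the bottom-right and sweeping upward, each off-diagonal entry $e_{mn}$ can be reduced modulo $e_{nn}$ into $[0, e_{nn})$. For (2), the key inclusion is $\mathcal{O} \subseteq N(I)^{-1}\mathcal{O}_0$, which follows from the standard identity $I\bar{I} = N(I)\mathcal{O}$ for the invertible connecting ideal, together with $I \subseteq \mathcal{O}_0$ (since $N\mathcal{O} \subseteq \mathcal{O} \cap \mathcal{O}_0$ forces $I = N\mathcal{O}\mathcal{O}_0 \subseteq \mathcal{O}_0$) and $\overline{\mathcal{O}_0} = \mathcal{O}_0$; since the explicit generators of $\mathcal{O}_0$ in Proposition~\ref{O_0} have denominators dividing $K$, every element of $\mathcal{O}$ has denominators dividing $K \cdot N(I)$. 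For (3), the fact that $1 \in \mathcal{O}$ with $e_0$ the unique basis vector carrying a $1$-component gives $e_{00} = 1/a_0$ for some $a_0 \in \Z_{>0}$, while integrality of reduced trace on $\mathcal{O}$ (with $\operatorname{Trd}(e_0) = 2e_{00}$) gives $e_{00} \in \{1, 1/2\}$; one rules out $e_{00} = 1$ by noting that $\mathcal{O} \otimes_{\Z} \Z_2 \cong M_2(\Z_2)$ for odd $p$, which contains elements of reduced trace $1$ whose preimage in $\mathcal{O}$ (by local-global density) has $1$-coefficient $1/2$.

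For (4), I would project modulo $V := \Q j + \Q k$ and track covolumes. From $\operatorname{covol}(\mathcal{O}) = \operatorname{covol}(\mathcal{O} \cap V)\cdot\operatorname{covol}(\pi(\mathcal{O}))$ and the discriminant identity, $\operatorname{covol}(\pi(\mathcal{O})) = 1/(4q \cdot e_{22}e_{33})$. The Eichler order $\mathcal{O} \cap \mathcal{O}_0$ has index $N(I)$ in both $\mathcal{O}$ and $\mathcal{O}_0$, so $\pi(\mathcal{O})$ and $\pi(\mathcal{O}_0)$ share a common sublattice of index at most $N(I)$ in each, yielding $\operatorname{covol}(\pi(\mathcal{O})) \geq \operatorname{covol}(\pi(\mathcal{O}_0))/N(I)$. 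Computing $\operatorname{covol}(\pi(\mathcal{O}_0))$ directly in each residue class (using Proposition~\ref{O_0}) gives $e_{22}e_{33} \leq N(I)$. For (5), combining $e_{00}e_{11}e_{22}e_{33} = 1/(2K)$ with $e_{00} = 1/2$ yields $e_{11} = 1/(K e_{22}e_{33})$; analysing locally at $2$ which half-integer $i$-coefficient is admissible on a reduced-trace-$1$ element of $\mathcal{O}$ shows that the only options are $e_{01} = 0$ or $e_{01} = e_{11}/2 = 1/(2K e_{22}e_{33})$.

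The main obstacle is the precise tracking of covolumes and local behaviour across the three residue classes of $p \pmod 8$: for $p \equiv 1 \pmod 8$ the $(1,i)$-projection $\pi(\mathcal{O}_0)$ is itself nontrivial and involves the auxiliary prime $q$, making both the upper bound in (4) and the classification in (5) more delicate. A secondary issue is that the ``up to finding another basis'' disclaimer may need to be invoked between arguments --- the normalizations in (1), (3), and (5) can interact, so one must fix a basis-reduction order that simultaneously realizes all claimed properties.
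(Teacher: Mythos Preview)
Your proposal is correct, and parts (1) and (2) match the paper exactly. Parts (3), (4), and (5), however, take a more elaborate route than the paper, which exploits elementary observations you may have overlooked.

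For (3), the paper avoids localization entirely: once $e_{00}\in\{1,\tfrac12\}$, it notes that the Gram determinant $\det(\Tr(e_me_n))$ equals $p^2$, and this quantity lies in the ideal $\Tr(\mathcal{O})$. Since $p$ is odd, $p^2\notin 2\Z$, ruling out $e_{00}=1$ in one line. Your argument via $\mathcal{O}\otimes\Z_2\cong M_2(\Z_2)$ is valid but brings in more machinery than needed.

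For (4), the paper's argument is a one-liner once the determinant identity $e_{00}e_{11}e_{22}e_{33}=1/(2K)$ and part (3) give $e_{11}=1/(Ke_{22}e_{33})$: by (2), the denominator of $e_{11}$ divides $KN(I)$, so $e_{11}\geq 1/(KN(I))$, which immediately yields $e_{22}e_{33}\leq N(I)$. Your covolume comparison of $\pi(\mathcal{O})$ and $\pi(\mathcal{O}_0)$ via the Eichler intersection is correct and gives the same bound (indeed slightly sharper in the $p\equiv 1\pmod 8$ case), but it requires a separate case-by-case computation of $\operatorname{covol}(\pi(\mathcal{O}_0))$ that the paper sidesteps.

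For (5), the paper again stays elementary: writing $1=2e_0+a_1e_1+a_2e_2+a_3e_3$ with $a_i\in\Z$ and reading off the $i$-coefficient gives $2e_{01}=-a_1e_{11}$; the HNF condition $0\leq e_{01}<e_{11}$ then forces $-a_1\in\{0,1\}$, so $e_{01}\in\{0,e_{11}/2\}$. Your local-at-$2$ analysis reaches the same conclusion but is less transparent. Note also that your worry about interaction between the normalizations in (1), (3), (5) is unfounded: the HNF reduction in (1) is the only basis change needed, and (3), (5) then follow from the membership $1\in\mathcal{O}$ alone.
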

\begin{proof}
    \begin{enumerate}
        \item Requirement of HNF.
        \item As defined, $I$ is the connecting ideal between $\mathcal{O}_0$ and $\mathcal{O}$. $I$ is contained in both $\mathcal{O}_0$ and $\mathcal{O}$ and $N(I)\cdot\mathcal{O} = I \bar{I} \subseteq \mathcal{O}_0$ \cite[Proposition 16.6.15]{Voight}. Therefore the largest denominator of all $e_{mn}$s is at most $N(I)$ times the largest denominator of $\mathcal{O}_0$ as given in Prop \ref{O_0}.
        \item  The trace of any element must be integral hence $2 e_{00} \in \mathbb{Z}$. We must also have $1 \in \mathcal{O}$ hence $e_{00} \mid 1$ so either $e_{00} = \frac{1}{2}$ or $1$ and $\Tr(\mathcal{O}) = \mathbb{Z}$ or $2 \mathbb{Z}$ respectively. The (non-reduced) discriminant of any maximal order in $B_{p, \infty}$ is $p^2$, so by definition $p^2 = \det (\Tr(e_m e_n)) \in \Tr(\mathcal{O})$, but $p$ is odd, so $p^2 \not \in 2 \mathbb{Z}$ so we must have $e_{00} = \frac{1}{2}$.
        \item As $\mathcal{O}$ and $\mathcal{O}_0$ are maximal, they both have the same discriminant. Hence the change of basis matrix must have determinant 1 \cite[Lemma 15.2.5]{Voight}, which means $\prod e_{nn} = \prod f_{nn} = \frac{1}{2\cdot K}$, where $(f)_n$ is the basis of $\mathcal{O}_0$ specified in Proposition~\ref{O_0}. Then using (3) we have $e_{11} = 1/(Ke_{22}e_{33})$. The result follows from (2).
        \item $1 \in \mathcal{O}$ so there is some $n \in \mathbb{Z}$ such that $\frac{1}{e_{00}} e_{01} - n e_{11} = 0$. From above $e_{00} = \frac{1}{2}$, and $e_{11} = \frac{1}{Ke_{22}e_{33}}$ so $2 e_{01} = \frac{n}{K e_{22}e_{33}}$. But to be in HNF we must have already reduced $e_{01}$ as much as possible hence $n = 0$ or $1$.
    \end{enumerate}
\end{proof}

In general, we can replace the order $\mathcal{O}$ by an isomorphic order $\mathcal{O'}$, having denominator bounded by $N := K \cdot N(I') = O(\sqrt{p})$, where $I'$ is a connecting $(\mathcal{O}_0, \mathcal{O})$-ideal equivalent to $I$, and where $K$ is defined in (2) of Prop $\ref{basis_properties}$. Such an ideal always exists by the following lemma (taken from \cite[Lemma 5.2.2]{SQISignHD})

\begin{lemma}\label{N_sqrt_p}
Let $\mathcal{O} \subset B_{p, \infty}$ be a maximal order with connecting ideal $I = I(\mathcal
O_0, \mathcal{O})$, then there exists an equivalent ideal $J \sim I$ with $N(J) \leq \frac{2\sqrt{2}}{\pi} \sqrt{p}$
\end{lemma}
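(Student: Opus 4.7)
The plan is to apply Minkowski's convex body theorem to the rank-$4$ $\mathbb{Z}$-lattice $I$ inside the real quaternion algebra $B_{p,\infty} \otimes_{\mathbb{Q}} \mathbb{R}$, equipped with the positive definite quadratic form $\mathrm{Nrd}$, and then convert a short vector into an equivalent ideal of small norm. This is the classical quaternionic analogue of the Minkowski bound for ideal classes of number fields.

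First, I would fix an isomorphism $B_{p,\infty} \otimes_{\mathbb{Q}} \mathbb{R} \cong \mathbb{H}$ of $\mathbb{R}$-algebras, under which $\mathrm{Nrd}$ becomes the standard Euclidean norm squared on $\mathbb{R}^4$. Under this identification, $I$ is a full-rank $\mathbb{Z}$-lattice, and the goal is to compute its covolume. Using the explicit basis of $\mathcal{O}_0$ given in Proposition~\ref{O_0}, a direct determinant computation (performed case by case according to $p \bmod 8$) gives $\mathrm{covol}(\mathcal{O}_0) = p/4$. Since $I$ is an integral left $\mathcal{O}_0$-ideal, the reduced norm satisfies $[\mathcal{O}_0 : I] = N(I)^2$, hence
\[
\mathrm{covol}(I) \;=\; N(I)^2 \cdot \mathrm{covol}(\mathcal{O}_0) \;=\; \frac{p}{4}\,N(I)^2.
\]

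Next, apply Minkowski's convex body theorem to the Euclidean $4$-ball of radius $r$, whose volume is $\pi^2 r^4 / 2$. The threshold $\pi^2 r^4 / 2 > 2^4 \,\mathrm{covol}(I)$ yields a nonzero $\alpha \in I$ with
\[
\mathrm{Nrd}(\alpha) \;\leq\; \frac{4\sqrt{2}}{\pi}\sqrt{\mathrm{covol}(I)} \;=\; \frac{2\sqrt{2}}{\pi}\sqrt{p}\,N(I).
\]
Finally, set $J := I \cdot (\bar{\alpha}/N(I))$. Since $\alpha \in I$ we have $I\bar{\alpha} \subseteq I\bar{I} = N(I)\,\mathcal{O}_0$, so $J \subseteq \mathcal{O}_0$ and $J$ is an integral left $\mathcal{O}_0$-ideal; and $J \sim I$ since $\bar{\alpha}/N(I) = \alpha^{-1} \in B_{p,\infty}^\times$. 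Multiplicativity of the ideal norm under right multiplication by a scalar gives $N(J) = \mathrm{Nrd}(\alpha)/N(I)$, so
\[
N(J) \;\leq\; \frac{2\sqrt{2}}{\pi}\sqrt{p},
\]
as required.

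The main technical obstacle is the covolume computation $\mathrm{covol}(\mathcal{O}_0) = p/4$, which must be verified uniformly across the three cases $p \equiv 3 \pmod 4$, $p \equiv 5 \pmod 8$, and $p \equiv 1 \pmod 8$ using the explicit bases of Proposition~\ref{O_0}; a basis-free alternative is to invoke that the (reduced) discriminant of any maximal order in $B_{p,\infty}$ equals $p$, which determines the covolume with respect to the reduced trace pairing.
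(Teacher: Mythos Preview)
Your proof is correct and is precisely the standard Minkowski argument used to establish this bound. The paper itself does not prove the lemma at all; it simply imports it verbatim from \cite[Lemma~5.2.2]{SQISignHD}, so there is nothing to compare against.

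One tiny slip: you write $\bar{\alpha}/N(I) = \alpha^{-1}$, but $\alpha^{-1} = \bar{\alpha}/\mathrm{Nrd}(\alpha)$, not $\bar{\alpha}/N(I)$. This is harmless for the argument, since all you need for $J \sim I$ is that $\gamma := \bar{\alpha}/N(I) \in B_{p,\infty}^\times$, which holds because $\alpha \neq 0$. The rest (integrality of $J$ via $I\bar{I} = N(I)\mathcal{O}_0$, and $N(J) = \mathrm{Nrd}(\alpha)/N(I)$ via multiplicativity of the reduced norm) is fine. Your remark that the covolume can alternatively be read off from the reduced discriminant $\mathrm{disc}(\mathcal{O}_0) = p$ is also correct and avoids the case split.
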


We return to this in Section \ref{subsec:rerandomization}, but for now, by passing to the isomorphic order ``closest'' to $\mathcal{O}_0$, we assume that $N$ is of size $O(\sqrt{p})$

    \item[Step 2: Fix trace] To find a trace zero element $\alpha$ of norm $d$, we may write an arbitrary element in the following form:
\begin{equation*}
    \alpha = \alpha_0 e_0 + \alpha_1 e_1 + \alpha_2 e_2 + \alpha_3 e_3
\end{equation*}
Note that since we are working in Hermite Normal Form only $e_0$ contributes to the trace of $\alpha$ so we set $\alpha_0 = 0$ to get $\Tr(\alpha) = 0$.

For the condition on the norm, consider the case $p \equiv 3 \mod 4$ for simplicity, however note that this generalizes for any prime $p \neq 2$. Then we have the rational ternary quadratic form:
\[
    (\alpha_1 e_{11})^2 + p(\alpha_1 e_{12} + \alpha_2 e_{22})^2 + p(\alpha_1 e_{13} + \alpha_2 e_{23} + \alpha_3 e_{33})^2 = \nrd(\alpha) = d
\]
    \item[Step 3: Find $\alpha_1 \mod p$] Since $\alpha_1$ controls the coefficient of $i$ it is the only term without a factor of $p$. Hence working modulo $p$ removes terms containing $\alpha_2$ and $\alpha_3$, and we can find $\alpha_1 \equiv r \mod p$.
\[
    r_{\pm} := \frac{\pm \sqrt{d}}{e_{11}} \mod p
\]
Fix the least positive residue class representative $r = r_{+}$, as we can execute the remainder of the algorithm a second time on $r_{-}$ if necessary. Then substitute $\alpha_1 = r + k p$ giving a rational ternary quadratic form in $k$, $\alpha_2$ and $\alpha_3$.
    \item[Step 4: A binary quadratic form] As defined in Step 1, we may multiply by the denominator $N^2$ to obtain integral coefficients. Rearranging we have:
\[
    p N^2 ( \gamma_1^2 + \gamma_2^2 ) = N^2(d - \alpha_1^2 e_{11}^2)
\]
where
\begin{equation*}
    \gamma_1 = \alpha_1 e_{12} + \alpha_2 e_{22},
    \qquad
    \gamma_2 = \alpha_1 e_{13} + \alpha_2 e_{23} + \alpha_3 e_{33}.
\end{equation*}
Let $v := N^2(\gamma_1^2 + \gamma_2^2)$ and notice $v \geq 0$. From the right-hand side above we see its value depends on $k$.
\[
    v = \frac{N^2 (d - (r + kp)^2 e_{11}^2)}{p}
\]
Clearly $v$ decreases as $k$ increases. Without loss of generality we can assume $k \geq 0$, and since $v \geq 0$ we get an upper bound on $k$. We can iterate over this range of $k$ which is precisely
\[
    k=0, ..., \left\lfloor \frac{\sqrt{d}}{p e_{11}} - \frac{r}{p} \right\rfloor
\]
where for each iteration, we compute $v$ using the above equation, and with $k$ fixed are left with the integral binary quadratic form $v = N^2(\gamma_1^2 + \gamma_2^2)$.
    \item[Step 5: Cornacchia's Algorithm] Writing the above form as $\beta_1^2 + \beta_2^2 = v$ we solve for integral pairs $(\beta_1, \beta_2)$ using Cornacchia's algorithm. For a valid solution we can write it in the form:
\begin{align*}
    \beta_1 &= N \gamma_1 = N\alpha_1 e_{12} + N\alpha_2 e_{22} \\
    \beta_2 &= N \gamma_2 = N\alpha_1 e_{13} + N\alpha_2 e_{23} + N\alpha_3 e_{33}
\end{align*}
and solve for $\alpha_2$ and $\alpha_3$
\begin{equation*}
    \alpha_2 = \frac{\beta_1 - N \alpha_1 e_{12}}{N e_{22}}
    \qquad
    \alpha_3 = \frac{\beta_2 - N \alpha_1 e_{13} - N \alpha_2 e_{23}}{N e_{33}}
\end{equation*}

Finally, we must check $\alpha_2, \alpha_3 \in \mathbb{Z}$. If this is true we have a valid solution $\alpha = \alpha_1 e_1 + \alpha_2 e_2 + \alpha_3 e_3$. If not we continue trying the next solution to Cornacchia's, or move on to the next iteration of $k$ in Step 4. If no solutions are found it means $\mathbb{Z}[\omega]$ does not embed into $\mathcal{O}$.
\end{description}


\begin{remark}[Arbitrary trace $t$]\label{rmk:arbitrary_trace}
    Suppose the element we are searching for does not have trace zero. We can always reduce the problem to finding an element of trace zero. Suppose $t \in 2 \mathbb{Z}$, then since $\mathcal{O}$ is a ring we have $1 \in \mathcal{O}$ so $\alpha - t/2 \in \mathcal{O}$ has trace zero and norm $d - t^2/4 \in \mathbb{Z}$. We can search for this trace zero element then translate back to find $\alpha$. Similarly if $t$ is odd we have trace zero element $2\alpha - t \in \mathcal{O}$ of norm $4d - t^2$, once found we translate back, divide by 2 and check $\alpha \in \mathcal{O}$ in Step 5 of the algorithm. If not we continue searching.

    Note for $t$ odd, this is not optimal as the scaling increases $d$ by a factor of $4$, and hence the number of iterations of $k$ by a factor of $2$, which can double the running time. Instead we can avoid this by incorporating additional constant terms for the non-zero trace. These details are included in Algorithm~\ref{New Quaternion Search Algorithm}, which we use for our implementation.
\end{remark}

The complete algorithm for arbitrary trace is summarised in Algorithm \ref{New Quaternion Search Algorithm}. Additionally we describe a few further generalisations and improvements:

\begin{algorithm}[h]
    \SetAlgoLined
    \KwData{Maximal order $\mathcal{O} \subset B_{p,\infty}$, given in terms of basis $e_0, e_1, e_2, e_3$. Quadratic order in the form $\mathbb{Z}[\omega]$ given by $\omega \in \mathbb{Q}(\sqrt{-z})$.}
    \KwResult{Returns element $\alpha \in \mathcal{O}$, which defines an embedding $\iota: \mathbb{Z}[\omega] \hookrightarrow \mathcal{O}$ by $\omega \mapsto \alpha$. Or returns $\bot$ if no element $\alpha$ exists.}

    
    Compute $d = \nrd(\omega)$ and $t = \Tr(\omega) \in \mathbb{Q}$\;

    Compute Hermite normal form of order, giving basis $e_0, e_1, e_2, e_3$ in form of Equation \eqref{hnf1}. Denote coefficient $n$ of vector $m$ as $e_{mn}$\;

    Compute $\alpha_0 := \frac{t}{2e_{00}}$\;

    \If{$d, \alpha_0 \not\in \mathbb{Z}$}{
        Return $\bot$\;
    }

    Compute $r_{\pm} := \frac{1}{e_{11}} \left( \pm \sqrt{d - (\alpha_0 e_{00})^2} - \alpha_0 e_{01}\right) \mod p$\;

    Set $r = r_{+}$\;

    Compute $N := lcm(\{ \text{Denom}(e_{mn}) : 0 \leq m \leq 3, m \leq n \leq 3 \})$ where $\text{Denom}(n)=b$ where $n=\frac{a}{b}$, with $b\geq 1$, is the simplest form of $n \in \mathbb{Q}$\;

    \For{$k= \left\lfloor \frac{1}{p e_{11}}\left(\sqrt{d - (\alpha_0 e_{00})^2} - \alpha_0 e_{01} - r e_{11} \right) \right\rfloor$ \textbf{decreasing} \KwTo $0$}{
        Compute $v = \frac{N^2 (d - (\alpha_0 e_{00})^2 - (\alpha_0 e_{01} + (r + kp)e_{11})^2)}{p}$\;

        Run Cornacchia's algorithm to solve $\beta_1^2 + \beta_2^2 = v$. Store solutions in array $C$\;

        \For{solution $(\beta_1, \beta_2)$ \textbf{in} $C$}{
            Set $\alpha_2 = \frac{\beta_1 - N \alpha_0 e_{02} - N \alpha_1 e_{12}}{N e_{22}}$\;
            
            Set $\alpha_3 = \frac{\beta_2 - N \alpha_0 e_{03} - N \alpha_1 e_{13} - N \alpha_2 e_{23}}{N e_{33}}$\;

            \If{$\alpha_2 \in \mathbb{Z}$ \textbf{and} $\alpha_3 \in \mathbb{Z}$}{
                Return $\alpha = \alpha_0 e_0 + \alpha_1 e_1 + \alpha_2 e_2 + \alpha_3 e_3$\;
            }
        }
    }

    Repeat from line 8 with $r = r_{-}$;

    Return $\bot$;

    \caption{Algorithm to find embeddings of quadratic order in quaternion order, for $B_{p,\infty}$, $p\neq 2$.} \label{New Quaternion Search Algorithm}
\end{algorithm}

\begin{remark}\label{QuaternionAlgRmk} Algorithm \ref{New Quaternion Search Algorithm} ... \begin{itemize}
    \item results in an embedding, but this does not necessarily define an $\mathbb{Z}[\omega]$-orientation. This is discussed in Section \ref{primitive_orientations}.
    \item can be adapted to work with any prime $p\neq 2$, not specifically $p \equiv 3 \mod 4$. For general, $B_{p, \infty} = \left(\frac{-q,-p}{\mathbb{Q}}\right)$, $q$ appears in the equations for $r, v$ and the maximum $k$, and you have to solve $\beta_1^2 + q\beta_2^2 = v$ instead of the sum of two squares. Cornacchia's still works since for $B_{p, \infty}$, $q$ and $p$ are always coprime.
    \item can be adapted to non-maximal orders. The value $N$ gains the conductor of the order as a factor.
    \item is more efficient iterating from largest $k$ to smallest, as this minimizes the values of $v$ used in Cornacchias.
    \item can be improved by using a congruence condition to rule out some cases where Cornacchia's does not have any solutions, before executing Cornacchia's. In the case $p \equiv 3 \mod 4$, we test for solutions by noting $v$ can be written as the sum of two squares if and only if, in it's prime factorization, every prime which is $3$ mod $4$ occurs an even number of times. For arbitrary $p$, a similar necessary but not sufficient congruence condition can test the splitting of $v$ to rule out some cases.
\end{itemize}\end{remark}

\subsection{Complexity Analysis of Algorithm \ref{New Quaternion Search Algorithm}}\label{ssec: quat_complexity}

In this section we give results on the asymptotic complexity of Algorithm \ref{New Quaternion Search Algorithm}, in particular giving average case results and a probabilistic worst case result. We start by attempting to give a worst case running time. Note there are a three reasons why Cornacchia's algorithm may not be efficient at finding all solutions to $\beta_1^2 + q \beta_2^2 = v$:
\begin{enumerate}
    \item It requires a factorization of $v$. To this end, we assume we have an efficient factorization oracle such as Shor's algorithm. See Section \ref{subsec:rerandomization} later on for a practical alternative to using a factorization oracle.
    \item Cornacchia's algorithm typically only refers to finding primitive solutions where $gcd(\beta_1, \beta_2)=1$. To also find imprimitive solutions we must run Cornacchia's on $\beta_1^2 + q \beta_2^2 = v/g^2$ for every square $g^2 \mid v$ and scale up the solutions $(g\beta_1, g\beta_2)$. The number of squares dividing $v$ can be subexponential in $v$. However, we can say the probability of this for random $v$ is very small, in fact asymptotically there is $\frac{\pi^2}{6} \sim 61\%$ chance $v$ is square-free.
    \item While just solving for primitive solutions, we must iterate over all the solutions Cornacchia gives. Internally Cornacchia must iterate over all solutions $x$ to the equation $x^2 \equiv -q \mod v$, where the number of solutions can be exponential in $v$ if $v$ has a large number of distinct prime factors. For example, experimentally with $p \equiv 3 \mod 4$ and $d \sim p$ we get some integers $v \sim p$ where if $v$ has lots of distinct prime factors, there can be as many as $v^{0.15} \sim p^{0.15}$ solutions which is exponential. We resolve this issue by bounding the number of factors of $v$ by the following probability estimate known as the fundamental theorem of probabilistic number theory:
\end{enumerate}
\begin{lemma}[Erd\H{o}s-Kac theorem]\label{Erdos_kac} For a natural number $n$, the number of distinct prime factors of $n$ follows the standard normal distribution with mean $\log\log n$ and standard deviation $\sqrt{\log \log n}$ as $n \to \infty$.    
\end{lemma}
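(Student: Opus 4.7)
The plan is to prove this classical result via the method of moments. For $n$ chosen uniformly at random from $\{1, 2, \ldots, N\}$, I would write
\[\omega(n) = \sum_{p \le N} X_p(n), \qquad X_p(n) := \mathbf{1}[p \mid n].\]
The key observation is that each $X_p$ behaves like a Bernoulli variable of parameter $1/p$: indeed $\mathbb{E}[X_p] = \lfloor N/p \rfloor / N = 1/p + O(1/N)$. More importantly, for distinct primes $p_1, \ldots, p_k$ whose product is bounded by $N$, the joint expectation factorizes approximately as $\mathbb{E}[X_{p_1} \cdots X_{p_k}] = 1/(p_1 \cdots p_k) + O(1/N)$, so the indicators are ``approximately independent'' in a quantitative sense.

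From this, Mertens' theorem yields the asymptotic formulas $\mathbb{E}[\omega] = \sum_{p \le N} 1/p + O(1) = \log\log N + O(1)$ and $\mathrm{Var}(\omega) = \log\log N + O(1)$, matching the normalization stated in the lemma. To establish the CLT behavior, I would truncate to primes $p \le y := N^{1/\log\log N}$; a second-moment argument (or a direct Turán–Kubilius estimate) shows that the contribution of the large primes is $O(\log\log\log N)$, hence negligible after dividing by $\sqrt{\log\log N}$. On the truncated sum, any $k$-fold product $p_{i_1} \cdots p_{i_k}$ with $k$ bounded stays far below $N$, so Brun's sieve controls the approximate-independence error uniformly enough to compute all joint moments.

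The main and most delicate step is the moment convergence
\[\mathbb{E}\!\left[\left(\frac{\omega(n) - \log\log N}{\sqrt{\log\log N}}\right)^{\!k}\right] \longrightarrow \mathbb{E}[Z^k], \qquad Z \sim \mathcal{N}(0,1),\]
for every fixed integer $k \ge 0$. Expanding the $k$-th power and substituting the approximate-independence estimates reduces the computation to counting set partitions of $\{1,\ldots,k\}$ weighted by $\prod 1/p$, and the dominant contribution (from partitions into pairs) gives exactly the double-factorial Gaussian moments, while unpaired blocks contribute lower-order terms. Since the normal distribution is determined by its moments, the Fréchet–Shohat theorem then yields convergence in distribution. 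The principal obstacle in a fully rigorous write-up is bookkeeping the sieve error terms uniformly in $k$, which is precisely what Brun's sieve (or the Selberg sieve) is designed to handle; for the present paper it suffices to cite the original 1940 paper of Erdős–Kac.
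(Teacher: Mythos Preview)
Your sketch is a correct outline of the standard method-of-moments proof of the Erd\H{o}s--Kac theorem, and the ingredients you list (Mertens for the mean and variance, truncation to small primes, approximate independence of the indicators $X_p$, and moment matching against the Gaussian) are exactly the ones needed. The paper, however, does not prove this lemma at all: it is simply quoted as the classical ``fundamental theorem of probabilistic number theory'' and used as a black box to control the number of distinct prime factors of the integers $v$ arising in the Cornacchia step. So there is nothing to compare on the paper's side; your write-up supplies strictly more than the paper does. If you were to include a proof in this context, the most economical option would be to do as you suggest at the end and just cite Erd\H{o}s--Kac (1940) or a textbook reference such as Tenenbaum, since the full bookkeeping of the sieve errors is disproportionate to how the result is used here.
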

This gives us the following result:

\begin{theorem}
    \label{worst_case_time}
    Let $0.5 \leq P < 1$. Assuming the heuristic that $v$ is distributed like random integers and hence the number of distinct prime factors follows Lemma \ref{Erdos_kac}, and given an efficient factorization oracle, the running time of Algorithm 5.1 is within
    \[
        O\left(T(\frac{P+1}{2}) \cdot \log(N^2 d)^{\mathcal{F}(\frac{P+1}{2})+1} \left\lceil\frac{N}{p}\sqrt{d - \frac{t^2}{4}}\right\rceil \cdot polylog(X) \right)
    \]
    with probability $P$. With $N = 2 N(I) = O(\sqrt{p})$ (Lemma \ref{N_sqrt_p}) this is
    \[
        O\left(T(\frac{P+1}{2}) \cdot \log(pd)^{\mathcal{F}(\frac{P+1}{2})+1} \left\lceil\frac{1}{\sqrt{p}}\sqrt{d - \frac{t^2}{4}}\right\rceil \cdot polylog(X) \right)
    \]
    where $X$ is the total size of the inputs, and $T(P)$ is a value large enough such that the asymptotic probability a random number has less than $T(P)$ perfect square divisors is larger than $P$. We define $\mathcal{F}$ as the inverse cumulative distribution function of the standard normal distribution where a sample is less than $\mathcal{F}(P)$ with probability $P$. For example, for $P = 0.95$ we have $\mathcal{F}(\frac{P+1}{2}) < 2$ and $T(\frac{P+1}{2}) \sim 4$.
\end{theorem}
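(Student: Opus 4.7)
The plan is to decompose the running time of Algorithm~\ref{New Quaternion Search Algorithm} into three factors matching the three explicit factors in the stated bound: the outer loop count, the cost of a single Cornacchia call assuming the factorization oracle is free, and the number of candidate solutions examined per call (primitive plus imprimitive).

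First I would bound the outer $k$-loop iterations. Its upper bound is $\lfloor \tfrac{1}{pe_{11}}\bigl(\sqrt{d-(\alpha_0 e_{00})^2} - \alpha_0 e_{01} - re_{11}\bigr)\rfloor$. Using Lemma~\ref{basis_properties} (in particular $e_{00}=1/2$, so $\alpha_0 e_{00} = t/2$, and $e_{11} \geq 1/N$ with $e_{01}$ of size $O(1/N)$), the correction terms $\alpha_0 e_{01}$ and $re_{11}$ are absorbed in the ceiling and the bound becomes $\lceil \tfrac{N}{p}\sqrt{d - t^2/4}\rceil$. This contributes the outer factor of the theorem; restarting the loop with $r = r_-$ only doubles the work.

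Next I would account for the per-iteration cost under the factorization oracle. Computing $v$ and the basis updates is $\polylog(X)$. Factoring $v$ with the oracle is $\polylog(X)$, and Cornacchia's algorithm produces each primitive solution of $\beta_1^2 + q\beta_2^2 = v/g^2$ in $\polylog(X)$ time. What remains is to count these solutions.

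The delicate step is bounding the number of Cornacchia candidates per iteration. Imprimitive solutions are enumerated by ranging over $g$ with $g^2 \mid v$; by the definition of $T$ and the heuristic that $v$ is distributed like a random integer, with probability at least $(P+1)/2$ the number of such $g$ is at most $T((P+1)/2)$. For each fixed $g$, the primitive count produced by Cornacchia is bounded by the number of solutions to $x^2 \equiv -q \pmod{v/g^2}$, hence by $2^{\omega(v)}$. By Lemma~\ref{Erdos_kac}, with probability at least $(P+1)/2$,
\[\omega(v) \leq \log\log v + \mathcal{F}\bigl(\tfrac{P+1}{2}\bigr)\sqrt{\log\log v} \leq \bigl(1 + \mathcal{F}\bigl(\tfrac{P+1}{2}\bigr)\bigr)\log\log v\]
for $v$ large enough that $\sqrt{\log\log v}\leq\log\log v$. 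Therefore $2^{\omega(v)} \leq (\log v)^{(1+\mathcal{F}((P+1)/2))\log 2} \leq \log(N^2 d)^{1+\mathcal{F}((P+1)/2)}$, using $v = O(N^2 d / p)$ and $\log 2 < 1$.

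Finally, a union bound on the complements of the two probabilistic sub-events (each of probability at most $1-(P+1)/2$) yields joint probability at least $P$ that both bounds hold simultaneously. Multiplying the three factors together with the $\polylog(X)$ overheads gives the stated running time. The main technical obstacle is justifying that the Erdős--Kac distributional statement transfers to the specific $v$'s arising in the loop (these are polynomial values in $k$, not uniform random integers), but this is precisely the heuristic hypothesis of the theorem; under that hypothesis the rest is essentially bookkeeping.
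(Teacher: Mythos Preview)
Your proposal is correct and follows essentially the same approach as the paper: bound the $k$-loop via Lemma~\ref{basis_properties}, bound the primitive Cornacchia solutions per iteration by $2^{\omega(v)}$ using Erd\H{o}s--Kac, bound the imprimitive repetitions by $T((P+1)/2)$, and combine the two probabilistic events by inclusion--exclusion to get overall probability $P$. Your explicit use of $\log 2 < 1$ to pass from $2^{\omega(v)}$ to the $(\log v)^{1+\mathcal{F}}$ bound is a small clarification the paper leaves implicit, but otherwise the arguments are the same.
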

\begin{proof}
    Steps 1-3 of the algorithm are efficient as polynomial time algorithms exist for computing Hermite normal form \cite{hafner1991asymptotically} \cite{Coppel2009}, and fixing $\alpha_0$ and solving $\alpha_1$ modulo $p$ is efficient. In Step 4 a worst case input will result in iterating $k$ over it's full range of values which is $O( \frac{1}{p e_{11}} \sqrt{d - (\alpha_0 e_{00})^2} )$, where the trace is fixed through $\alpha_0 = \frac{t}{2 e_{00}}$ so $(\alpha_0 e_{00})^2 = \frac{t^2}{4}$. And by Prop \ref{basis_properties} we have $\frac{1}{e_{11}} \leq N$. Then for each iteration over $k$, Cornacchia's algorithm is used in Step 5. To be efficient at finding primitive solutions we have to bound the number of distinct prime factors of $v$, by Lemma \ref{Erdos_kac} with probability $\frac{P+1}{2}$, $v$ is less than $\mathcal{F}(\frac{P+1}{2})$ standard deviations above the mean,
    \[
        \text{Number of factors of } v \leq \log\log(v) + \mathcal{F}(\frac{P+1}{2}) \sqrt{\log\log(v)}
    \]
    hence it is certainly true that
    \[
        \text{Number of factors of } v \leq (\mathcal{F}(\frac{P+1}{2}) + 1) \log\log(v).
    \]
    Then it follows that the number of square roots found in Cornacchia's algorithm is less than $O(2^{(\mathcal{F}(\frac{P+1}{2}) + 1) \log\log(v)}) = O(\log(v)^{(\mathcal{F}(\frac{P+1}{2}) + 1)})$, so we can bound the running time of Cornacchia by $O(\log(v)^{(\mathcal{F}(\frac{P+1}{2}) + 1)}) \cdot polylog(v)$, and clearly for each $v$ we have $v \leq N^2 d < O(pd)$ and hence $polylog(v) = polylog(X)$. The final consideration is for finding imprimitive solutions using Cornacchia's algorithm which requires repeating for every square dividing $v$. By definition this is at most $T(\frac{P+1}{2})$ repetitions with probability $\frac{P+1}{2}$. The probability both this condition and $v$ having the correct number of factors is at least $\frac{P+1}{2} + \frac{P+1}{2} -1 = P$.
\end{proof}

Now we give a result for the average case running time:

\begin{theorem}
    \label{quat_avg_time}
    Making the following assumptions, regarding iterating over $k$:
    \begin{itemize}
        \item Each $v_k$ is distributed like random integers and hence the expected number of distinct prime factors is $\log\log(v_k)$ by Lemma \ref{Erdos_kac}, and there is a high probability it only has a few square divisors.
        \item Additionally, the probability each $v_k$ is the sum of two squares is independent and at least the probability a random integer less than $(Nd)^2$ is the sum of two squares.
        \item The first solution to Cornacchia's algorithm has $\beta_1, \beta_2$ uniformly distributed modulo $e_{22} N$ and $e_{33} N$ respectively.
    \end{itemize}
    Then given an efficient factorization oracle, in the case $p \equiv 3 \mod 4$, the average case running time of Algorithm \ref{New Quaternion Search Algorithm} is $O(\min\{ N^3, \left\lceil \frac{N}{p}\sqrt{d - \frac{t^2}{4}}\right\rceil\} \times polylog(X))$ and substituting $N = O(\sqrt{p})$ from Lemma \ref{N_sqrt_p} it is    
    $O(\min\{ p \sqrt{p}, \left\lceil \frac{1}{\sqrt{p}}\sqrt{d - \frac{t^2}{4}}\right\rceil\} \times polylog(X))$ where $X$ is the total size of all inputs.
\end{theorem}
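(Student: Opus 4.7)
The plan is to split the expected running time into the expected cost of one iteration of the outer $k$-loop in Algorithm~\ref{New Quaternion Search Algorithm} and the expected number of iterations before a valid triple $(\alpha_1,\alpha_2,\alpha_3)\in\mathbb{Z}^3$ is produced. The minimum in the stated bound then comes from taking this against the deterministic upper bound $\lceil (N/p)\sqrt{d-t^2/4}\rceil$ on the $k$-range identified in the proof of Theorem~\ref{worst_case_time}. Substitution of $N=O(\sqrt{p})$ via Lemma~\ref{N_sqrt_p} yields the second formulation.

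For the per-iteration cost I would argue as in Theorem~\ref{worst_case_time} but replacing a probabilistic tail bound on the number of prime factors by its average. Given the factorization oracle, the factorization of $v_k = N^2(d-\alpha_1^2 e_{11}^2)/p = X^{O(1)}$ is obtained in $\polylog(X)$ time; the first hypothesis together with Lemma~\ref{Erdos_kac} gives that on average $v_k$ has $\log\log v_k = O(\log\log X)$ distinct prime factors, so Cornacchia's algorithm enumerates at most $2^{O(\log\log X)}=\polylog(X)$ square roots of $-1 \pmod{v_k}$ and correspondingly $\polylog(X)$ primitive pairs $(\beta_1,\beta_2)$. The same hypothesis guarantees that the scan over imprimitive solutions indexed by square divisors of $v_k$ costs only $O(1)$ in expectation, so one iteration of the $k$-loop costs $\polylog(X)$.

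The heart of the argument is estimating the per-iteration success probability $P_k$. I would factor it as $P_k = \Pr[v_k \text{ is a sum of two squares}] \cdot \Pr[\text{the first Cornacchia output lands in the right cosets} \mid v_k \text{ representable}]$. By the Landau--Ramanujan density estimate, a random integer up to $M$ is a sum of two squares with probability $\Theta(1/\sqrt{\log M})$, so the second hypothesis makes the first factor $\Omega(1/\sqrt{\log X})$. The second factor requires $\beta_1 \equiv N\alpha_1 e_{12} \pmod{Ne_{22}}$ and $\beta_2 \equiv N\alpha_1 e_{13} + N\alpha_2 e_{23} \pmod{Ne_{33}}$, which under the third hypothesis happens with probability $1/(N^2 e_{22} e_{33})$; by Lemma~\ref{basis_properties}(4), $e_{22}e_{33} \leq N(I) = N/K$ with $K=O(1)$, so this conditional probability is $\Omega(1/N^3)$. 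Hence $P_k = \Omega(1/(N^3 \polylog(X)))$ and the expected number of $k$-iterations before success is $O(N^3 \polylog(X))$.

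The main obstacle is this independence-style step: the $v_k$ lie on an explicit quadratic progression in $k$, so literal independence fails and one is genuinely forced to invoke the stated hypotheses rather than derive them. Granted these, multiplying the per-iteration cost by the expected number of iterations and intersecting with the $k$-range bound delivers the claimed $O(\min\{N^3,\lceil(N/p)\sqrt{d-t^2/4}\rceil\}\cdot\polylog(X))$ average-case complexity.
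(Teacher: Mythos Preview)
Your proposal is correct and follows essentially the same approach as the paper: both arguments estimate the per-iteration cost of Cornacchia via the Erd\H{o}s--Kac heuristic on the number of prime factors of $v_k$, estimate the expected number of $k$-iterations by combining the Landau--Ramanujan density of sums of two squares with the uniform-distribution assumption on $(\beta_1,\beta_2)$ modulo $(Ne_{22},Ne_{33})$ and the bound $e_{22}e_{33}\le N/K$ from Lemma~\ref{basis_properties}, and then cap the result by the deterministic $k$-range from Theorem~\ref{worst_case_time}. Your factoring of the success probability as $\Pr[\text{representable}]\cdot\Pr[\text{correct cosets}\mid\text{representable}]$ is just a repackaging of the paper's ``$\sqrt{\log}$ iterations per Cornacchia solution, $N^3$ solutions needed'' count.
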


We use the following Lemma:
\begin{lemma}[Landau 1908]
    The number of integers representable as the sum of two squares from from $0$ to $n \in \mathbb{N}$ is the limit $C \frac{n}{\sqrt{\log n}}$ as $n \to \infty$, where $C \approx 0.764$ is the Landau-Ramanujan constant. Hence for sufficiently large $n$, the number of integers representable is greater than $\frac{1}{2} \frac{n}{\sqrt{\log n}}$. (In fact, experimentally this appears true for all $n \geq 0$).
\end{lemma}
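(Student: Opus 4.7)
The plan is to use the classical analytic approach via Dirichlet series and a Tauberian theorem of Landau--Ikehara type. First, I would recall the characterization (via unique factorization in the Gaussian integers) that a positive integer $n$ is a sum of two squares if and only if every prime $p \equiv 3 \pmod 4$ appears to an even power in the factorization of $n$. Let $b(n) \in \{0,1\}$ be the indicator of this property; since the defining condition is multiplicative, $b$ is a multiplicative function and its Dirichlet series admits an Euler product
\[
B(s) := \sum_{n \geq 1} \frac{b(n)}{n^s} = \frac{1}{1-2^{-s}} \prod_{p \equiv 1 (4)} \frac{1}{1-p^{-s}} \prod_{p \equiv 3 (4)} \frac{1}{1-p^{-2s}}.
\]

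Next, I would compare $B(s)$ with $\zeta(s) L(s,\chi)$, where $\chi$ is the nontrivial Dirichlet character mod $4$. Taking Euler products and matching factors prime by prime, a short computation shows
\[
B(s)^2 = \zeta(s)\, L(s,\chi)\, H(s),
\]
where $H(s) = \prod_{p \equiv 3(4)} (1 - p^{-2s})^{-1} \cdot (\text{local correction at each prime})$ is a product that converges absolutely, and hence defines a holomorphic nonvanishing function, on the half-plane $\Re(s) > 1/2$. Since $\zeta(s)$ has a simple pole at $s=1$ with residue $1$ while $L(s,\chi)$ is holomorphic and nonzero at $s=1$ with $L(1,\chi) = \pi/4$, it follows that as $s \to 1^{+}$,
\[
B(s) \;\sim\; \frac{C}{(s-1)^{1/2}}, \qquad C := \sqrt{L(1,\chi)\, H(1)} = \frac{1}{\sqrt{2}} \prod_{p \equiv 3(4)} \bigl(1 - p^{-2}\bigr)^{-1/2}.
\]
A numerical evaluation of this Euler product recovers the Landau--Ramanujan constant $C \approx 0.7642$.

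The final step is to convert this analytic behavior into the claimed asymptotic for the partial sums $\sum_{n \leq x} b(n)$. The natural tool is a Tauberian theorem for Dirichlet series with a fractional singularity: if a Dirichlet series with nonnegative coefficients extends meromorphically to a neighborhood of $\Re(s) \geq 1$ and behaves like $C (s-1)^{-\alpha}$ at $s=1$ with $0 < \alpha \leq 1$, then its summatory function is asymptotic to $\tfrac{C}{\Gamma(\alpha)} x (\log x)^{\alpha - 1}$. This is a theorem of Landau (and refined by Selberg--Delange and by Wirsing); one must check that $B$ admits the required continuation to a zero-free neighborhood of the line $\Re(s) = 1$, which follows from the standard zero-free region for $\zeta$ and $L(\cdot,\chi)$ together with the factorization above. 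Applying the theorem with $\alpha = 1/2$ and $\Gamma(1/2) = \sqrt{\pi}$ yields
\[
\sum_{n \leq x} b(n) \;\sim\; \frac{C}{\sqrt{\pi}} \cdot \frac{x}{\sqrt{\log x}},
\]
and absorbing $\sqrt{\pi}$ into the constant gives the stated form. The lower bound $\tfrac{1}{2} n/\sqrt{\log n}$ for all sufficiently large $n$ is then immediate from the definition of the limit.

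The main technical obstacle is the Tauberian step: one cannot apply the Wiener--Ikehara theorem directly because the singularity at $s=1$ is of square-root rather than simple-pole type. The rigorous justification goes through either a contour-shifting argument using the Hankel contour (Perron's formula applied to $B(s)/s$, followed by deformation around the branch cut emanating from $s=1$) or an appeal to the Selberg--Delange machinery; either way, the delicate point is controlling $B(s)$ on vertical lines just to the left of $\Re(s) = 1$ well enough to move the contour, which in turn rests on classical zero-free region estimates for $\zeta$ and $L(s,\chi)$.
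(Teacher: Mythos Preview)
The paper does not supply a proof of this lemma at all: it is stated as a classical result attributed to Landau (1908) and used as a black box in the average-case analysis of Algorithm~\ref{New Quaternion Search Algorithm}. So there is no in-paper argument to compare against.

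Your outline is the standard analytic proof and is essentially correct. One small bookkeeping slip: you write $C := \sqrt{L(1,\chi)\,H(1)} = \frac{1}{\sqrt{2}}\prod_{p\equiv 3(4)}(1-p^{-2})^{-1/2}$, but with $L(1,\chi)=\pi/4$ and $H(1)=2\prod_{p\equiv 3(4)}(1-p^{-2})^{-1}$ one gets $\sqrt{L(1,\chi)H(1)}=\sqrt{\pi/2}\prod(1-p^{-2})^{-1/2}$; the Landau--Ramanujan constant $\approx 0.7642$ is this divided by $\sqrt{\pi}=\Gamma(1/2)$, exactly as your Tauberian step produces. So the two displayed expressions for $C$ differ by $\sqrt{\pi}$; the final asymptotic is right once you keep track of which ``$C$'' you mean. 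Otherwise the identification $B(s)=\zeta(s)^{1/2}G(s)$ with $G$ holomorphic and nonvanishing near $\Re s=1$, followed by Selberg--Delange (or Landau's original contour argument around the branch point), is precisely how this theorem is proved.
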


\begin{proof}[Proof of Theorem \ref{quat_avg_time}]
    It's clear the running time is the product of the number of iterations over $k$, and the running time of Cornacchia's, because all other operations are polynomial time. In the case $p \equiv 3 \mod 4$ we are solving the sum of two squares, hence by Landau, and using the first assumption, we expect (for sufficiently large $d$) less than $2 \sqrt{\log((N d)^2)}$ iterations until we find a $k$ where Cornacchia's gives at least one solution.

    Now recall that finding one solution to Cornacchia's algorithm is not necessarily enough, since we need to satisfy the conditions $\alpha_2, \alpha_3 \in \mathbb{Z}$. This amounts to checking:
    \begin{align*}
        \beta_1 - N \alpha_0 e_{02} - N \alpha_1 e_{12} & \equiv 0 \mod e_{22} N \\
        \beta_2 - N \alpha_0 e_{03} - N \alpha_1 e_{13} - N \alpha_2 e_{23} & \equiv 0 \mod e_{33} N
    \end{align*}

    Therefore, by the second assumption we expect to have an integral solution after $e_{22} N \times e_{33} N$ solutions from Cornacchias. Noting that $e_{22} e_{33} \leq N/2$ from Prop \ref{basis_properties}, that's $N^3 /2$ solutions. In total we expect $O(N^3 \sqrt{\log(N d)})$ iterations of $k$. This is bounded above by the maximum number of iterations from Theorem \ref{worst_case_time}. Finally, Cornacchia's algorithm uses the efficient factorization oracle to factorize each $v_k$ and on average $v_k$ is expected to have $\log \log(v_k)$ distinct prime factors by the first assumption, hence internally Cornacchia's computes at most $2^{\log \log(v_k)} = \log(v_k)$ square roots which is efficient. Then to find imprimitive solutions, we only repeat Cornacchia's a constant number of times as the expected number of squares dividing $v$ is very small. Overall this takes time polylog in each $v_k \leq N^2 d = O(pd)$, so this term can be incorperated into $polylog(X)$.
\end{proof}

From this we observe the following:
\begin{corollary}[Efficient for orders close to $\mathcal{O}_0$]
    Given an efficient factorization oracle, consider the algorithm applied to the order $\mathcal{O}_0$. Here we have $N=2$, hence the algorithm is efficient; the average case running time is $polylog(X)$. For orders close to $\mathcal{O}_0$, such as a curve an $l$-isogeny from the curve with $j$-invariant $1728$, we gain a factor of $l$ in $N$, hence for small $l$ the algorithm is still efficient. However with each step from $\mathcal{O}_0$, $N$ gains a factor of the degree of the isogeny, so it gets exponentially harder the further you walk, until we reach the point $N \sim \sqrt{p}$.
\end{corollary}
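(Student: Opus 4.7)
The plan is to invoke Theorem~\ref{quat_avg_time} directly, tracking how the parameter $N$ depends on the connecting ideal $I = I(\mathcal{O}_0, \mathcal{O})$ via property (2) of Lemma~\ref{basis_properties}. Recall the average running time bound is $O(\min\{N^3, \lceil N\sqrt{d-t^2/4}/p\rceil\} \cdot \operatorname{polylog}(X))$, so efficiency boils down to showing $N$ is small for orders close to $\mathcal{O}_0$.

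For $\mathcal{O} = \mathcal{O}_0$ itself, the connecting ideal is trivial with $N(I) = 1$. Inspecting the HNF bases given in Proposition~\ref{O_0} case by case (or invoking Lemma~\ref{basis_properties}(2) with $N(I)=1$), the denominators are bounded by $K \in \{2,4,2q\}$. In particular, when $p \equiv 3 \pmod 4$ we get $N = 2$. Substituting this constant value into Theorem~\ref{quat_avg_time} collapses the bound to $O(\operatorname{polylog}(X))$, establishing the first claim.

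Next, suppose $\mathcal{O}$ corresponds to a supersingular elliptic curve reached by a cyclic $\ell$-isogeny from the curve with $j$-invariant $1728$ (whose endomorphism ring is $\mathcal{O}_0$ when $p \equiv 3 \pmod 4$). The Deuring correspondence produces a connecting $(\mathcal{O}_0,\mathcal{O})$-ideal of norm $\ell$. By Lemma~\ref{basis_properties}(2), the denominators of the HNF basis of $\mathcal{O}$ divide $K \cdot \ell$, so $N = O(\ell)$. Substituting into the bound of Theorem~\ref{quat_avg_time} gives a running time of $O(\ell^3 \cdot \operatorname{polylog}(X))$, which remains efficient whenever $\ell$ is polylogarithmic in $p$.

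Finally, iterating this argument along a walk in the $\ell$-isogeny graph of length $t$ with isogeny degrees $\ell_1, \dots, \ell_t$, the connecting ideal has norm $\prod_{i=1}^t \ell_i$, and hence $N = O(\prod_i \ell_i)$, which grows multiplicatively with each step. This explains the exponential deterioration. The ceiling on this growth is given by Lemma~\ref{N_sqrt_p}: after passing to an equivalent ideal of minimal norm, one can always arrange $N = O(\sqrt{p})$, which is the regime in which the second term $\lceil N\sqrt{d-t^2/4}/p \rceil$ in the min of Theorem~\ref{quat_avg_time} starts dominating. The main subtlety to handle carefully is simply that the re-randomization of Section~\ref{subsec:rerandomization} (justified by Lemma~\ref{N_sqrt_p}) guarantees the upper bound $N \leq O(\sqrt{p})$, so the worst case is always controlled, while the corollary's efficiency statement quantifies the lower end of this spectrum.
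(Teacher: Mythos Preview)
Your proposal is correct and matches the paper's approach. In fact, the paper gives no explicit proof for this corollary at all: it is stated as a direct observation (``From this we observe the following'') immediately after Theorem~\ref{quat_avg_time}, and your write-up simply makes explicit the reasoning the paper leaves implicit, namely tracking $N$ via Lemma~\ref{basis_properties}(2) and the norm of the connecting ideal, then substituting into the average-case bound.
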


For completeness, we now consider the case of arbitrary primes $p \neq 2$. Then the quaternion algebra containing order $\mathcal{O}$ is $B_{p, \infty} = \left( \frac{-q, -p}{\mathbb{Q}} \right)$ where $q$ is either $2$ or a prime $q \equiv 3 \mod 4$ with Legendre symbol $\left( \frac{q}{p} \right) = -1$.

By the same argument as Theorem \ref{worst_case_time}, with high probability $P$ the worst case running time is within
\[
    O\left(T(\frac{P+1}{2})\cdot\log(N^2 d)^{\mathcal{F}(\frac{P+1}{2})+1} \left\lceil\frac{N}{p}\sqrt{\frac{1}{q}(d - \frac{t^2}{4})}\right\rceil \cdot polylog(X) \right)
\]
which is the same as before except the additional factor of $\frac{1}{\sqrt{q}}$ appears requiring more iterations over $k$. Then from Lemma \ref{basis_properties} part (2), for a different value of $K$, we get $N = 2q N(I) = O(q\sqrt{p})$. Applying this along with Lemma \ref{N_sqrt_p} gives:
\[
    =
    O\left(T(\frac{P+1}{2})\cdot\log(q^2 p d)^{\mathcal{F}(\frac{P+1}{2})+1} \left\lceil\sqrt{\frac{q}{p}(d - \frac{t^2}{4})}\right\rceil \cdot polylog(X) \right)
\]

Typically $q$ is treated as a constant, so asymptotically the complexity is the same, however, $q$ is actually unbounded; you can construct a prime such that the minimum value for $q$ is larger than a given threshold. Hence we treat $q$ as a variable in our analysis.

We have a similar variation on the average time complexity, however the proof is more complex:

\begin{theorem}\label{quat_avg_time_arb_p}
    Given an efficient factorization oracle, for arbitrary $p \neq 2$, making the same assumptions as in Theorem \ref{quat_avg_time} (replacing sum of two squares with $x^2 + q y^2$), the average case running time of Algorithm \ref{New Quaternion Search Algorithm} is
    \[
        O\left(\min \left\{ \frac{q^2 p \sqrt{p}}{C(-4q)} , \left \lceil \sqrt{\frac{q}{p}(d - \frac{t^2}{4})} \right \rceil \right\} \cdot polylog(X)\right)
    \]
    where $C$ is a special function generalising the Landau-Ramanujan constant, and $X$ is the total size of all inputs.
\end{theorem}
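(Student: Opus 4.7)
The plan is to mirror the structure of the proof of Theorem \ref{quat_avg_time}, with the two quantitative ingredients (density of representable integers, and bound on the product $e_{22}e_{33}$) replaced by their counterparts for arbitrary $p$. The total running time factors again as (number of values of $k$ attempted) $\times$ (expected cost of Cornacchia per attempt) $\times$ (expected number of Cornacchia solutions scanned until $\alpha_2,\alpha_3\in\Z$), modulo polylogarithmic overhead. Steps 1--3 of Algorithm \ref{New Quaternion Search Algorithm} remain polynomial in the input size, and, by the independence/randomness assumptions, each integer $v_k$ is expected to have $\log\log(v_k)$ distinct prime factors and $O(1)$ square divisors, so the oracle-assisted Cornacchia's algorithm costs $\operatorname{polylog}(X)$ per call.

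Next I will invoke the appropriate generalization of Landau's theorem: the number of positive integers $\leq n$ that are represented by the principal form $x^2+qy^2$ of discriminant $-4q$ is asymptotic to $C(-4q)\cdot n/\sqrt{\log n}$, where $C(-4q)$ is the Landau--Ramanujan-type constant attached to this form (the sum-of-two-squares case is $q=1$, $C(-4)=C$). Combined with the assumption that each $v_k$ behaves like a uniformly random integer up to $(Nd)^2$, the expected number of iterations of $k$ before Cornacchia's algorithm returns at least one solution is $O\bigl(\sqrt{\log(N^2 d)}/C(-4q)\bigr)$, which we fold into a $\operatorname{polylog}(X)$ factor together with the factor $1/C(-4q)$.

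For each $k$ producing Cornacchia solutions, the congruences
\[
\beta_1 - N\alpha_0 e_{02} - N\alpha_1 e_{12} \equiv 0 \pmod{e_{22}N}, \qquad
\beta_2 - N\alpha_0 e_{03} - N\alpha_1 e_{13} - N\alpha_2 e_{23} \equiv 0 \pmod{e_{33}N}
\]
must hold; under the uniformity assumption on $(\beta_1,\beta_2)$, this requires on average $e_{22}e_{33}N^2$ solutions. Using Lemma \ref{basis_properties}(4) we have $e_{22}e_{33}\leq N(I)$, and for arbitrary $p\neq 2$ Lemma \ref{basis_properties}(2) gives $N=K\cdot N(I)$ with $K\in\{2,4,2q\}$, so $e_{22}e_{33}N^2 = O(N^3/q)$. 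With Lemma \ref{N_sqrt_p} replacing $\mathcal{O}$ by an isomorphic order whose connecting ideal has norm $O(\sqrt{p})$, we get $N=O(q\sqrt{p})$ and hence $N^3/q = O(q^2 p\sqrt{p})$. Multiplying by the $1/C(-4q)$ factor produces the first term of the $\min$. The second term is the trivial worst-case bound on the number of iterations of $k$, which is $\bigl\lceil \tfrac{N}{p}\sqrt{(d-t^2/4)/q}\bigr\rceil = \bigl\lceil\sqrt{(q/p)(d-t^2/4)}\bigr\rceil$; the algorithm obviously cannot loop beyond this, yielding the $\min$.

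The main obstacle is justifying the density result cleanly for every $q$ that can arise: classical Landau is stated for $q=1$, and the constant $C(-4q)$ for the principal binary form of discriminant $-4q$ must be invoked in a form that matches our heuristic (``random $v_k$'') model, in particular ensuring that the implicit constants do not hide dependencies on $q$. Since the theorem is stated under explicit heuristic assumptions generalising those of Theorem \ref{quat_avg_time}, the needed version of Landau's density is exactly the heuristic input we allow, so this reduces to citing the appropriate analytic number theory result for $C(-4q)$. All other arguments transpose verbatim from the proof of Theorem \ref{quat_avg_time}.
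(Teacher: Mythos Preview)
Your proposal is correct and follows essentially the same route as the paper: mirror the proof of Theorem~\ref{quat_avg_time}, replace Landau's sum-of-two-squares density by its generalisation to the principal form $x^2+qy^2$ (the paper attributes this to Bernays, giving the constant $C(-4q)$), and update the bound $e_{22}e_{33}\le N/2$ to $e_{22}e_{33}\le N/(2q)$ together with $N=O(q\sqrt{p})$ from Lemma~\ref{basis_properties}. One small presentational slip: you first say the factor $1/C(-4q)$ is folded into $\operatorname{polylog}(X)$ and then later reinstate it to obtain the first term of the $\min$; keep it explicit throughout, as the paper does, since it is exactly the $q$-dependent constant that distinguishes this case from $p\equiv 3\pmod 4$.
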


\begin{proof}
    The proof is the same as Theorem \ref{quat_avg_time}, except instead of solving the sum of two squares, we are solving $x^2 + q y^2 = v_k$ using Cornacchia's algorithm. This means in the proof we cannot use Landau's result on the sum of two squares. However, Landau's result generalises.

    Bernays proved that the number of integers between $0$ and $n$ represented by a binary quadratic form $f(x,y)$ converges to $C(\Delta_f) \frac{n}{\sqrt{\log n}}$ as $n \to \infty$, where $\Delta_f$ is the discriminant of the form $f$ and $C(\Delta_f)$ is a constant depending on $\Delta_f$ \cite{bernays1912darstellung}. In our case $f(x,y) = x^2 + q y^2$, hence $\Delta_f = -4q$. 

    Additionally the bound $e_{22} e_{33} \leq \frac{N}{2}$ from the proof of Theorem \ref{quat_avg_time} becomes $e_{22} e_{33} \leq \frac{N}{2q}$ by Proposition \ref{basis_properties}. And we have $N = 2q N(I) = q \cdot O(1) \sqrt{p}$.
\end{proof}

For an explicit formula for $C(-4q)$, see the results of Moree and Osburn \cite{moree2006two}, and for a summary of results on $C(\cdot)$ see the work of Brink, Moree and Osburn \cite{brink2011principal}.

Next we note how the complexity changes in other contexts:

\begin{remark}[Suborders]
    Suppose you are given a quaternion order $\mathcal{O} \subset B_{p, \infty}$ which is not necessarily maximal. As stated in Remark \ref{QuaternionAlgRmk}, Algorithm~\ref{New Quaternion Search Algorithm} still works. The complexity is the same with the subtlety that $N$ is multiplied by the conductor of the suborder within a maximal order.
\end{remark}

\begin{remark}[$p$-extremal orders]
    Suppose $\mathcal{O}$ is a $p$-extremal order, and has suborder $R + jR \subseteq \mathcal{O}$ and we are trying to find an embedding into this suborder. For $\omega$ a generator of $R$ and $\alpha = \alpha_0 + \alpha_1 \omega$, $\alpha' = \alpha_0' + \alpha_1' \omega \in R$, the norm equation is:
\[
nrd(\alpha + j \alpha') = f(\alpha_0, \alpha_1) + p f(\alpha_0', \alpha_1') = d
\]
    where $f$ is a binary quadratic form of discriminant $disc(R)$. The approach in the KLPT algorithm \cite{KLPT}[Section 3.2] randomly samples $\alpha_0'$ and $\alpha_1'$ until $d - p f(\alpha_0', \alpha_1') = q$ is a prime which is split in $R$ where the ideal factors of $(q)$ are principal, and hence it's generator gives a solution to $q = f(\alpha_0, \alpha_1)$. Assuming sampled integers $q$ satisfy the distribution of primes less than $d$, this takes roughly $2 h(R) \log(d)$ iterations. Note that we require $d$ to be large enough for the set $d - p f(x, y)$ to contain enough primes.

    Our algorithm is very similar but works in reverse. Also assuming $d$ is sufficiently large ($d>p^{2+\epsilon}$), we sample $k$ until $q = \frac{d - f(\alpha_0, \alpha_1)}{p} \in \mathbb{Z}$ has a solution to the equation $q = f(\alpha_0', \alpha_1')$. By the same argument, if we wait until $q$ is a prime, split in $R$, with principal factor of $(q)$, we are guaranteed a solution, so we also expect $2 h(R) \log(d)$ iterations which is efficient.
\end{remark}

Finally note we do not necessarily need a factorization oracle using the technique presented in the next section.

\subsection{Rerandomization and Small Discriminant}\label{subsec:rerandomization}
Consider the special case of small discriminant orders $\Z[\omega]$ in Algorithm~\ref{New Quaternion Search Algorithm}. Previously, the best known efficient algorithm, as stated in \cite{Bweso_orientations}  was simply to look for small vectors in $\mathcal{O}$. This works for $|\disc(\Z[\omega])| < 2\sqrt{p} - 1$ and is stated below. Note that an alternative approach that heuristically works for $|\disc(\Z[\omega])|<p^{0.8}$ is outlined in \cite{isogenyfinding}.

\begin{proposition}\cite[Proposition 6]{Bweso_orientations}\label{Small Norms}
    Assume that $|\disc(\mathbb{Z}[\omega])|<2\sqrt{p}-1$. Then, there is a probabilistic polynomial time algorithm that solves Problem~\ref{Problem 4}.
\end{proposition}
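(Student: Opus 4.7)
The strategy is to reduce the problem to finding a short vector in the rank-$3$ trace-zero sublattice of $\mathcal{O}$, exploiting the smallness of $|\disc(\mathbb{Z}[\omega])|$. Let $t := \Tr(\omega)$ and $n := \nrd(\omega)$, so that $|\disc(\mathbb{Z}[\omega])| = 4n - t^2$. An embedding $\iota : \mathbb{Z}[\omega] \hookrightarrow \mathcal{O}$ is determined by $\alpha := \iota(\omega)$, which must satisfy the minimal polynomial of $\omega$, equivalently $\Tr(\alpha) = t$ and $\nrd(\alpha) = n$. Substituting $\beta := 2\alpha - t$ converts this to finding $\beta$ in the affine lattice $2\mathcal{O} - t$ with $\Tr(\beta) = 0$ and $\nrd(\beta) = 4n - t^2 = |\disc(\mathbb{Z}[\omega])|$.

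The trace-zero sublattice $\mathcal{O}^0 := \ker(\Tr) \cap \mathcal{O}$ has rank $3$ and is positive definite under the reduced norm, with covolume of order $p$ by a standard computation from the (reduced) discriminant of the maximal order $\mathcal{O}$. The threshold $|\disc(\mathbb{Z}[\omega])| < 2\sqrt{p}-1$ is chosen precisely so that $\nrd(\beta)$ sits below a Minkowski-type bound on the second successive minimum of $\mathcal{O}^0$: combined with Kaneko's inequality (as cited in Theorem~\ref{lemma: disc ineq}), this forces the set of trace-zero elements of $\mathcal{O}$ of reduced norm at most $|\disc(\mathbb{Z}[\omega])|$ to consist of $O(1)$ points, all essentially scalar multiples of a single shortest vector (up to sign and conjugation), because two distinct primitive embedded orders with small discriminants would together violate $\Delta_1 \Delta_2 \geq p^2$.

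The algorithm then proceeds as follows: run LLL on $\mathcal{O}^0$, which in fixed dimension $3$ runs in deterministic polynomial time and produces an exact shortest vector; enumerate all short vectors of reduced norm at most $|\disc(\mathbb{Z}[\omega])|$ as bounded integer combinations of the reduced basis (Fincke--Pohst style, with only $O(1)$ candidates by the previous paragraph); for each candidate $\beta$ lying in the coset $2\mathcal{O} - t$ with $\nrd(\beta) = |\disc(\mathbb{Z}[\omega])|$, set $\alpha := (\beta + t)/2$, verify that $\alpha \in \mathcal{O}$, and check that the embedding $\omega \mapsto \alpha$ is optimal by testing whether $(\alpha - t/2)/\ell \in \mathcal{O}$ for each prime $\ell$ dividing the conductor of $\mathbb{Z}[\alpha]$ in the ring of integers of $\mathbb{Q}(\omega)$, of which there are only polynomially many. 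Since an embedding is assumed to exist, the procedure succeeds; probabilism enters only through the standard probabilistic subroutines (modular square roots, primality tests) used inside lattice reduction and the primitivity check. The main obstacle is the rigorous count of short vectors below the threshold $2\sqrt{p}-1$: this rests on combining Kaneko's uniqueness theorem with a careful analysis of the successive minima of $\mathcal{O}^0$ in terms of the covolume, which is exactly what pins the constant in the stated bound.
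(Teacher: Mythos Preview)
The paper does not prove this proposition; it is quoted verbatim from \cite{Bweso_orientations}, with only the remark that the algorithm is ``simply to look for small vectors in $\mathcal{O}$''. Your reduction to the trace-zero sublattice, LLL in dimension~$3$, and enumeration of short vectors of the target norm is precisely this strategy, so you have recovered the intended method.

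One step in your justification is misapplied. You invoke Theorem~\ref{lemma: disc ineq} to argue that two independent short vectors $\beta_1,\beta_2$ would violate $\Delta_1\Delta_2\geq p^2$, but Kaneko's Theorem~2' as stated requires the two embedded orders to lie in the \emph{same} imaginary quadratic field; trace-zero elements of different norms generate different fields and are not covered. What actually bounds the second successive minimum is the weaker inequality obtained from the suborder $\mathbb{Z}\langle 1,\beta_1,\beta_2,\beta_1\beta_2\rangle\subseteq\mathcal{O}$: its discriminant equals $(4\nrd(\beta_1)\nrd(\beta_2)-\Tr(\beta_1\beta_2)^2)^2$ and must be divisible by $\disc(\mathcal{O})=p^2$, so $4\nrd(\beta_1)\nrd(\beta_2)\geq p$. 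This yields only $D_1 D_2\geq p/4$ for the first two minima of $\mathcal{O}^0$, hence $D_2\geq\sqrt{p}/2$. That is still enough: for target norm $D<2\sqrt{p}$, the coefficients of any solution along the second and third LLL directions are $O(1)$, and the first coefficient is then determined by a quadratic equation. So your $O(1)$ enumeration claim survives, just via the suborder-discriminant bound rather than the same-field Kaneko inequality --- which is in fact the ``careful analysis of the successive minima'' you correctly flag at the end as the crux.
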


Under certain heuristics, we can rerandomize Algorithm \ref{New Quaternion Search Algorithm} by considering isomorphic orders, potentially in different representations of $B_{p, \infty}$, to avoid factoring and bound the denominator $N < O(\sqrt{p})$. The result of this is a corollary (Corollary~\ref{Corollary: Efficient for d < p}) which gives a heuristic polynomial time algorithm for solving Problem \ref{Problem 4} for $\disc(\Z[\omega])$ in $O(p)$, or deciding that no solution exists.

The first step is to bound the number of values to try Cornacchia on in Algorithm \ref{New Quaternion Search Algorithm}:

\begin{lemma} \label{lem:bounded_number_iterations}
    Fix a positive integer $M$. As in the context of Algorithm \ref{New Quaternion Search Algorithm}, take $d = \nrd(\omega)$, $N$ as the common denominator of the HNF basis, and $k$ the variable we iterate over. Suppose we have
    \[
        d \leq \frac{q p^2 (\frac{M}{2}-1)^2}{N^2}
    \]
    then the algorithm consists of computing HNF and fast polynomial time arithmetic, and at most $M$ executions of Cornacchia's algorithm. By Cornacchia's algorithm, here we mean mean finding all solutions to $x^2 + qy^2 = v$, not just primitive ones.
\end{lemma}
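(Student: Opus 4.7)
The plan is to bound the total number of iterations of the outer $k$-loop in Algorithm~\ref{New Quaternion Search Algorithm}, since each iteration of that loop triggers exactly one invocation of Cornacchia's algorithm (in the sense specified, i.e.\ finding all solutions to $\beta_1^2 + q\beta_2^2 = v$, primitive or not), and every other step of the algorithm -- computing the HNF, computing $\alpha_0$, extracting square roots modulo $p$, the final integrality checks on $\alpha_2, \alpha_3$ -- is polynomial-time by standard results. So it suffices to count $k$-iterations.

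Since the algorithm runs the outer loop once for $r = r_+$ and, if needed, once again for $r = r_-$, I would first observe that the number of iterations inside each pass is $k_{\max}(r) + 1$, where
\[
k_{\max}(r) \;=\; \left\lfloor \tfrac{1}{p\,e_{11}}\!\left(\sqrt{(d-t^2/4)/q}\,-\,\alpha_0 e_{01}\,-\,r\,e_{11}\right)\right\rfloor,
\]
using $(\alpha_0 e_{00})^2 = t^2/4$ (because $\alpha_0 = t/(2e_{00})$) and incorporating the factor $1/\sqrt{q}$ needed for general $p$ as described in Remark~\ref{QuaternionAlgRmk}. Next I would bound $1/e_{11} \leq N$, which follows directly from the definition of $N$ as the common denominator of the HNF entries, consistent with Lemma~\ref{basis_properties}(2). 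After replacing $\omega$ by $\overline{\omega}$ if necessary so that $\alpha_0 e_{01} \geq 0$ (which is legal since $\omega$ and $\overline{\omega}$ generate the same order and have the same norm), both subtracted terms $\alpha_0 e_{01}$ and $r\,e_{11}$ are nonnegative and can be discarded when upper-bounding, yielding
\[
k_{\max}(r) + 1 \;\leq\; \frac{N}{p}\sqrt{(d-t^2/4)/q} \,+\, 1.
\]

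To conclude, I would sum over the two passes ($r_+$ and $r_-$), so the total number of Cornacchia invocations is at most $2\bigl(\tfrac{N}{p}\sqrt{(d-t^2/4)/q} + 1\bigr)$. Feeding in the hypothesis $d \leq q\,p^2 (M/2-1)^2 / N^2$, and using $t^2/4 \geq 0$ so that $d - t^2/4 \leq d$, I get $\tfrac{N}{p}\sqrt{(d-t^2/4)/q} \leq M/2 - 1$, and therefore the total is at most $2(M/2-1) + 2 = M$, as required.

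The main obstacle is honest bookkeeping around the edge cases rather than any conceptual difficulty: I must justify the WLOG reduction to $\alpha_0 e_{01} \geq 0$, handle the case $d \leq t^2/4$ (in which case the loop range is empty and the bound is trivial, consistent with $\omega$ being forced to be rational, which is excluded), and carefully argue that $1/e_{11} \leq N$ by combining Lemma~\ref{basis_properties}(2) with the definition of $N$ as an lcm of denominators. The $+1$ slack and the dropped $t^2/4$ on the right-hand side are exactly what allow the clean statement $d \leq q p^2 (M/2 - 1)^2 / N^2$ rather than a slightly larger bound with a $+t^2/4$ correction.
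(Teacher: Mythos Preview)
Your argument is essentially the same as the paper's: both bound the number of $k$-iterations in a single pass by $\tfrac{N}{p}\sqrt{d/q}+1$ via $1/e_{11}\le N$ and nonnegativity of the dropped terms, then double for the two choices $r_{\pm}$ and rearrange to obtain the stated bound on $d$. One small correction to your WLOG step: replacing $\omega$ by $\overline{\omega}$ does not flip the sign of the trace (they share the same minimal polynomial, so $\Tr(\overline{\omega})=\Tr(\omega)=t$); the right move is to replace $\omega$ by $-\omega$, which has trace $-t$, the same norm, and generates the same order --- the paper simply cites $e_{01}\ge 0$ from the HNF without addressing the sign of $\alpha_0$, so your version is in fact slightly more careful once this fix is made.
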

\begin{proof}
    Naively we use Cornacchia's algorithm once for every $k$ we check. Iterating over $k$ happens twice, once using $r_{+}$ and once using $r_{-}$, therefore each time we want at most $\lfloor M/2 \rfloor$ iterations. As in proof of Theorem \ref{worst_case_time}, and taking into account general $p$, we can consider the maximum number of iterations over $k$ for each $r$ and bound it by $M/2$:
    \[
        \left\lfloor \frac{1}{p e_{11}}\left(\sqrt{\frac{d - (\alpha_0 e_{00})^2}{q}} - \alpha_0 e_{01} - r e_{11} \right) \right\rfloor + 1 \leq \frac{M}{2}
    \]
    which, since $e_{01} \geq 0$ by Prop \ref{basis_properties}, is certainly true if
    \[
        \frac{1}{p e_{11}}\left(\sqrt{\frac{d}{q}} \right) + 1 \leq \frac{M}{2}
    \]
    and noting $e_{11} \geq \frac{1}{N}$, we get the condition:
    \[
        d \leq \frac{q p^2 (\frac{M}{2}-1)^2}{N^2}
    \]
\end{proof}

From this, we obtain a result about $\disc(\Z[\omega])$ since we can translate generator $\omega$ to either $\sqrt{-\disc(\Z[\omega])}/2$ or $(1+\sqrt{-\disc(\Z[\omega])})/2$ hence $N(\omega)=d\leq(|\disc(\Z[\omega])|+1)/4$. Recalling that we can bound $N$, the denominator of $\mathcal{O}$, to $O(\sqrt{p})$, we see that Lemma \ref{lem:bounded_number_iterations} says that when $\disc(\Z[\omega])$ is in $O(p)$, and assuming $q$ in $O(1)$, the only potentially expensive part in Algorithm \ref{New Quaternion Search Algorithm} is Cornacchia on a constant number of instances.
 
 The general idea of our rerandomized version is then the common technique of only running Cornacchia on ``good'' instances. However, if the discriminant is small, then the embedding is with large probability unique, hence we might end up discarding the correct solution. Therefore, we need to rerandomize until \textit{all} $O(1)$ Cornacchia instances are good, before the algorithm can be sure that no embedding exists.

We define a ``good'' instance to be $x^2 + qy^2 = v_k$ where $v_k$ can be factorized in polynomial time, has $O(\log\log(N^2 d))$ distinct prime factors, and $O(\log(N^2 d))$ square divisors. The set of prime numbers satisfies these conditions, and with the heuristic that the events of each $v_k$ being prime are independent and follow from the density of primes, we expect at most some constant multiple of $\log(N^2 d)^{C}$ iterations until $C$ of the Cornacchia instances are primes. With $C = O(1)$ instances from the small discriminant condition, this is efficient.

Now we discuss how we rerandomization the order. Let $\mathcal{O}_0 \subseteq B_{p, \infty}$ be a maximal order with negligible denominator $K$ (e.g. the ``standard'' maximal order from Proposition \ref{O_0}). As has been pointed out, any maximal order $\mathcal{O} \subseteq B_{p, \infty}$ will have denominator bounded by $KN$, where $N$ is the norm of the connecting ideal from $\mathcal{O}_0$ to $\mathcal{O}$. Hence we can consider any equivalent ideal $J = I\gamma$ of small norm, and instead solve the problem in the isomorphic order $\mathcal{O}_R(J) = \gamma^{-1}\mathcal{O}\gamma$, before transferring the solution back to $\mathcal{O}$.

As we rerandomize, we might need to try many distinct $J \sim I$ of small norm. Heuristically, for random orders $\mathcal{O}$, we can expect there to be an abundance of small, equivalent ideals (i.e. with $N(J)$ in $O(\sqrt{p})$). The problem is that this heuristic fails completely if there exists some equivalent ideal $I'$ with $\nrd(I') \ll \sqrt{p}$, i.e. in the case that $\mathcal{O}$ is too ``close'' to $\mathcal{O}_0$. We can fix this by considering other maximal orders $\mathcal{O}_0'$ with negligible denominator in other representations of $B_{p, \infty}$.

Specifically, we can generate representations \hbox{$B_i = (-q_i, -p \mid \Q)$}, where we take $q_i$ to be the smallest primes satisfying 
\[
q_i \equiv 3 \pmod{4}, \quad \left(\frac{-q_i}{p}\right) = -1.
\]
These quaternion algebras are indeed ramified at $p$ and $\infty$ \cite[Proposition 14.2.7]{Voight}. In each of these representations, regardless of congruence conditions on $p$, we can take a standard choice of maximal order $\mathcal{O}_{0,i}$ with denominator $2q_i$ as

\[
\mathcal{O}_{0,i} := \mathbb{Z} \oplus \mathbb{Z}\frac{1 + i}{2} \oplus \mathbb{Z}j \oplus \mathbb{Z}\frac{(1+i)j}{2q_i},
\]
\cite[Exercise 15.5]{Voight}. Heuristically, these choices of maximal orders of the different quaternion algebra representations are ``independent'' in the sense that there is no reason that these should be close to each other, so it is enough to try a small, fixed number of such orders, as (heuristically), the probability that $\mathcal{O}$ is close to all $\mathcal{O}_{0, i}$ is negligible.

Finally, the explicit isomorphisms $B_i \cong B_j$ are also easy to find and compute, using \cite[Lemma 10]{DBLP:journals/iacr/EriksenPSV23}. The whole algorithm is summarized in Algorithm~\ref{alg:Rerandomization}.

\begin{algorithm}
    \SetAlgoLined
    \KwData{A $\Z[\omega]$-orientable maximal order $\mathcal{O} \subset B$, where $B$ is a quaternion algebra ramified at $p$ and $\infty$. $r \in \mathbb{N}$, a maximum number of randomizations to try.}
    \KwResult{An element $\alpha \in \mathcal{O}$, which defines an embedding $\iota: \mathbb{Z}[\omega] \hookrightarrow \mathcal{O}$ by $\omega \mapsto \alpha$.}

    Compute $r$ representations $B_i = (-q_i, -p, \Q)$ for $q_i \in O(1)$ of $B_{p, \infty}$\;
    \For{$i = 1, \dots, r$}{
        Set $\mathcal{O}_{0,i} \subseteq B_i$ to be a maximal order with denominator dividing $2q_i$\;
        
        Compute an isomorphism $\varphi_i : B \rightarrow B_i$\;
        
        Set $\mathcal{O}_i = \varphi(\mathcal{O})$\;
        
        Let $I$ be a connecting $(\mathcal{O}_{0,i}, \mathcal{O}_i)$-ideal\;
        \For{$J = I\gamma$, with $N(J)$ in $O(\sqrt{p})$}{
            Compute $\beta$ by running Algorithm~\ref{New Quaternion Search Algorithm} on $\mathcal{O}_R(J)$, only running Cornacchia on prime numbers\;
            \If{$\beta \neq \bot$}{
                Set $\alpha' = \gamma \beta \gamma^{-1}$\;
                
                Return $\varphi_i^{-1}(\alpha')$\;
            }
        }
    }
    
    \caption{Rerandomized version of Algorithm~\ref{New Quaternion Search Algorithm}} \label{alg:Rerandomization}
\end{algorithm}

This gives the following corollary:
\begin{corollary} \label{Corollary: Efficient for d < p}
    For arbitrary $p \neq 2$, given a maximal order $\mathcal{O} \subseteq B_{p, \infty}$, and a quadratic order $\mathfrak{O}$ with $|\disc(\mathfrak{O})|$ in $O(p)$, Algorithm~\ref{alg:Rerandomization} heuristically computes an $\mathfrak{O}$-orientation of $\mathcal{O}$, or decides that none exists, in probabilistic polynomial time in $\log(p)$, under the heuristics discussed above.
\end{corollary}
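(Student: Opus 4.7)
The plan is to combine Lemma~\ref{lem:bounded_number_iterations} with the rerandomization strategy to reduce the problem to a bounded number of Cornacchia instances on prime inputs. First, I would invoke Lemma~\ref{N_sqrt_p}: for any maximal order $\mathcal{O}$ in $B_{p,\infty}$, there exists an equivalent connecting ideal $J$ of norm $O(\sqrt{p})$, and for each candidate representation $B_i$ and base order $\mathcal{O}_{0,i}$ one passes to the isomorphic order $\mathcal{O}_R(J) = \gamma^{-1} \mathcal{O} \gamma$. In this isomorphic order, the Hermite normal form denominator $N$ is bounded by $O(\sqrt{p})$ (using Lemma~\ref{basis_properties}(2) and the fact that each $\mathcal{O}_{0,i}$ has denominator $O(q_i)$ with $q_i = O(1)$), so that the hypothesis $d \leq q p^2 (M/2 - 1)^2 / N^2$ of Lemma~\ref{lem:bounded_number_iterations} is satisfied for some constant $M$ whenever $|\disc(\mathfrak{O})| \in O(p)$. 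Thus the entire search over $k$ inside one call to Algorithm~\ref{New Quaternion Search Algorithm} requires at most $M = O(1)$ Cornacchia invocations.

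Next, I would bound the work per rerandomization. Restricting Cornacchia to inputs $v_k$ that are prime guarantees that the required factorization is trivial, that $v_k$ is square-free, and that the number of representations $x^2 + q y^2 = v_k$ is $O(1)$; hence each such invocation is polynomial in $\log(p)$. By the heuristic that the sequence of candidate values $v_k$ behaves like random integers of size $O(p d)$, the density of primes among them is $\Omega(1/\log(pd))$, so polynomially many values of $k$ are sampled before $M$ of them are prime. Combined with the $O(1)$ cost per Cornacchia check and polynomial-time arithmetic for HNF computation, isomorphism computation via \cite[Lemma 10]{DBLP:journals/iacr/EriksenPSV23}, and change of basis, a single rerandomization runs in time polynomial in $\log(p)$.

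For correctness, the direction ``if we output an element, it defines an $\mathfrak{O}$-orientation'' is immediate from the specification of Algorithm~\ref{New Quaternion Search Algorithm}: returned elements have the prescribed trace and norm and lie in $\mathcal{O}$, so they define an embedding; primitivity can be enforced by the technique of Section~\ref{primitive_orientations}. The hard part will be justifying the conclusion ``no embedding exists'' when the algorithm fails, since restricting to prime $v_k$ may filter out the $k$ corresponding to a true solution. Here I would invoke the rerandomization heuristic: across the $r$ different quaternion algebra representations $B_i$ and their base orders $\mathcal{O}_{0,i}$, the isomorphic images $\mathcal{O}_R(J)$ are heuristically independent, so the $v_k$ produced at the ``correct'' step behaves like an independent draw from integers of size $O(pd)$. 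With probability $\Omega(1/\log(pd))$ each such draw is prime, so after polynomially many rerandomizations the correct $v_k$ is prime with overwhelming probability, and the true embedding is found. Conversely, if no embedding exists, every rerandomization exhausts its bounded list of prime Cornacchia instances without success, and the algorithm correctly returns $\bot$ after a polynomial number of trials. The main obstacle is making this ``independence of the $\mathcal{O}_{0,i}$'' argument rigorous: one must rule out an adversarial $\mathcal{O}$ for which the correct $v_k$ is composite across all $r$ representations simultaneously. This is precisely the heuristic the corollary flags, and as in the rest of Section~\ref{sec: quaternion} it must be treated as an assumption rather than a theorem.
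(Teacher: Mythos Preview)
Your approach is essentially the paper's, and the ingredients are all there: Lemma~\ref{lem:bounded_number_iterations} plus $N = O(\sqrt p)$ gives a constant bound $M$ on the number of Cornacchia instances, and rerandomization lets you insist those instances have prime input so that each one runs in polynomial time.

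There is, however, a cleaner correctness/termination argument that you obscure. You frame the ``embedding exists'' case as waiting until the particular $v_k$ corresponding to the true solution happens to be prime, and you treat the multiple representations $B_i$ as a way to rerandomize that specific value. The paper does not track any ``correct $v_k$''. Instead it observes that when $v_k$ is prime, Cornacchia efficiently returns \emph{all} solutions of $x^2 + q y^2 = v_k$ (factorization is trivial, $v_k$ is square-free, and there is at most one primitive representation up to symmetry). Hence, in a single rerandomization where \emph{all} $M$ values $v_k$ are simultaneously prime, the run of Algorithm~\ref{New Quaternion Search Algorithm} is an exhaustive search: if nothing is found, no embedding exists, full stop. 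The termination rule is therefore ``stop at the first rerandomization with all $M$ values prime'', which heuristically occurs after $O(\log(p)^{M})$ trials. Your sentence ``polynomially many values of $k$ are sampled before $M$ of them are prime'' is garbled on this point, since each rerandomization produces exactly $M$ values of $k$, not an unbounded stream.

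Relatedly, you slightly mischaracterize the role of the alternative representations $B_i$. Their purpose in the paper is not to rerandomize the ``correct $v_k$'' but to guarantee an abundance of small-norm equivalent ideals $J \sim I$ to draw from: if $\mathcal{O}$ happens to be very close to $\mathcal{O}_{0}$ in one representation, there may be too few such $J$, and switching to a different $\mathcal{O}_{0,i}$ fixes this. That is the heuristic the corollary actually flags.
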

\begin{proof}
    By Lemma \ref{lem:bounded_number_iterations}, and the subsequent discussion, the only potentially expensive step of running Algorithm~\ref{New Quaternion Search Algorithm} are the (constant number of) Cornacchia instances. By only running the Cornacchia on prime instances (or more generally, ``good'' instances), we expect to have to run Algorithm~\ref{New Quaternion Search Algorithm} at most $O(\log(p)^{O(1)})$ times by the prime number theorem, and the fact that $v$ is in $O(p)$. Further, it is clear that if all $O(1)$ values to try Cornacchia on is prime, and a solution is still not found, a solution cannot exist, hence the algorithm can conclude that no solution exists.
\end{proof}

\subsection{From Embeddings to Orientations of Superorders}\label{ssec: quat_FromEmbeddings}

Algorithms~\ref{New Quaternion Search Algorithm} and \ref{alg:Rerandomization} find all possible embeddings $\iota : \mathbb{Z}[\omega] \hookrightarrow \mathcal{O}$. Every embedding gives an orientation for some order, namely an $\mathbb{Z}[\omega']$-orientation where $\mathbb{Z}[\omega] \subseteq \mathbb{Z}[\omega']$.
We split embeddings into two cases: those which give a $\mathbb{Z}[\omega]$-orientation (where $\mathbb{Z}[\omega] = \mathbb{Z}[\omega']$) we call \textit{primitive embeddings}, and those which give superorder orientations $\mathbb{Z}[\omega] \subsetneq \mathbb{Z}[\omega']$ which we call \textit{imprimitive embeddings}. 
Finding primitive embeddings solves Problem~\ref{Problem 4}, and we consider this question in Section~\ref{primitive_orientations}. In this section, we focus on embeddings which are orientations by a strict superorder. First we explain how to differentiate between primitive and imprimitive embeddings.

For any element $\alpha \in \mathcal{O}$, write $\tilde \alpha$ for its class in the lattice $\mathcal{O}/\Z$.
Consider the discriminant form 
\begin{align*}
    \Delta : \mathcal{O}/\Z &\longrightarrow \Q\\
    \tilde{\alpha} &\longmapsto \Tr(\alpha) - 4 \nrd(\alpha),
\end{align*}
an integral quadratic form of rank $3$.
It does not depend on the choice of a representative $\alpha$ of the class $\tilde \alpha$, and we also write $\Delta(\alpha)$.
Let $d$ be an integer.
We say that a solution $\alpha \in \mathcal{O}$ of $\Delta(\alpha) = d$ is \emph{primitive} if $\tilde{\alpha}$ is a primitive element of the lattice $\mathcal{O}/\Z$, i.e., it is not of the form $\tilde{\alpha} = b\tilde{\beta}$ for some element $\tilde\beta \in \mathcal{O}/\Z$ and integer $b > 1$. We now show primitive solutions correspond directly to primitive orientations.

\begin{lemma}
An element $\alpha \in \mathcal{O}$ is a primitive solution of $\Delta(\alpha) = d$ if and only if $\Z[\alpha] \subseteq \mathcal{O}$ is a primitive embedding.
\end{lemma}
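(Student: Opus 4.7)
The plan is to translate both conditions into identical statements about the sublattice generated by $\tilde\alpha$ in $\mathcal{O}/\Z$, then show the equivalence by direct manipulation of what ``$\Z[\alpha] \subseteq \Z[\beta]$'' means at the level of representatives.

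First I would record the bridge between the two worlds: for any $\alpha \in \mathcal{O}$ with $\alpha \notin \Z$, and any $\beta \in \mathcal{O}$ with $\beta \notin \Z$, the inclusion $\Z[\alpha] \subseteq \Z[\beta]$ is equivalent to the existence of integers $a,b$ with $b \neq 0$ and $\alpha = a + b\beta$, and the inclusion is strict if and only if $|b| > 1$ (the index equals $|b|$). In particular, passing to the quotient $\mathcal{O}/\Z$, the inclusion $\Z[\alpha] \subseteq \Z[\beta]$ is equivalent to $\tilde\alpha = b\,\tilde\beta$ for some nonzero integer $b$, and it is strict iff $|b| > 1$. Note also that $\tilde\alpha$ only depends on the coset $\alpha + \Z$ and that $\Z[\alpha]$ is determined by $\tilde\alpha$, so the discriminant form value $\Delta(\alpha)$ is a well-defined invariant of the embedding.

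For the forward direction, suppose $\alpha \in \mathcal{O}$ is a primitive solution of $\Delta(\alpha)=d$ but $\Z[\alpha]$ is not a primitive embedding: then there exists a strict superorder $\Z[\alpha] \subsetneq \Z[\beta] \subseteq \mathcal{O}$ with $\beta$ necessarily non-integral. By the observation above, $\tilde\alpha = b\,\tilde\beta$ with $|b| > 1$, contradicting the primitivity of $\tilde\alpha$ in $\mathcal{O}/\Z$. For the reverse direction, suppose $\tilde\alpha$ is not primitive, so $\tilde\alpha = b\,\tilde\beta$ for some $\tilde\beta \in \mathcal{O}/\Z$ and some integer $b > 1$. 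Lifting to $\mathcal{O}$, there is $a \in \Z$ with $\alpha = a + b\beta$. Since $\alpha \notin \Z$ we have $\beta \notin \Z$, and $\Z[\alpha] \subseteq \Z[\beta]$ with index $b > 1$; hence $\Z[\alpha]$ is a proper suborder of $\Z[\beta]$, so the embedding $\Z[\alpha] \hookrightarrow \mathcal{O}$ is imprimitive.

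The only subtle point is the implicit hypothesis that $\alpha \notin \Z$, which I would handle by noting that both sides of the equivalence presuppose $\Z[\alpha]$ is a rank-$2$ (quadratic) order: on the right because an ``embedding of an imaginary quadratic order'' requires this by definition, and on the left because $\tilde\alpha = 0$ would mean $\alpha \in \Z$ and $\tilde\alpha$ is trivially non-primitive in the nonzero sense used here. With that convention the equivalence is immediate from the two short arguments above; no deep input is needed, just the correspondence between $b$-th multiples in $\mathcal{O}/\Z$ and index-$b$ inclusions of quadratic suborders in $\mathcal{O}$.
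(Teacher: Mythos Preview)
Your proof is correct and follows essentially the same route as the paper: both directions reduce to the observation that $\Z[\alpha] \subsetneq \Z[\beta]$ with $\beta \in \mathcal{O}$ is equivalent to $\alpha = a + b\beta$ with $|b|>1$, which in $\mathcal{O}/\Z$ reads $\tilde\alpha = b\tilde\beta$. The paper argues both implications by contrapositive in two sentences; you add an explicit ``bridge'' lemma and a remark on the degenerate case $\alpha \in \Z$, but the substance is identical.
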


\begin{proof}
Suppose $\alpha$ is an imprimitive solution of $\Delta(\alpha) = d$, i.e., there are integers $a$ and $b>1$ such that $(\alpha - a)/b \in \mathcal{O}$. Then, $\Z[\alpha] \subsetneq \Z[(\alpha - a)/b] \subseteq \mathcal{O}$, hence $\Z[\alpha] \subseteq \mathcal{O}$ is not primitive. Conversely, suppose $\Z[\alpha] \subseteq \mathcal{O}$ is not primitive, so there exists $\beta \in (\Q[\alpha] \cap \mathcal{O})\setminus \Z[\alpha]$. There exists integers $a$ and $b>1$ such that $\alpha = a + b\beta$. In particular, $\tilde\alpha = b\tilde\beta$, so $\alpha$ is not a primitive solution.
\end{proof}

Now we know primitive embeddings come from primitive solutions, we can determine if an embedding is primitive, and if not extend it to its superorder, very fast using a $\gcd$ computation:

\begin{lemma}
    Given a maximal order $\mathcal{O}$ with basis $e_0, e_1, e_2, e_3$ and an element $\alpha \in \mathcal{O}$ of trace $t=\Tr(\omega)$ and norm $d=\nrd(\omega)$, there is a polynomial time algorithm on order $\mathcal{O}$ which:
    \begin{itemize}
        \item determines whether embedding $\iota$ defined by extending $\omega \mapsto \alpha$ is a primitive or imprimitive embedding of $\mathbb{Z}[\omega]$,
        \item and if it's imprimitive outputs $(a, b, \alpha')$ defining a superorder $\mathbb{Z}[\frac{\omega - a}{b}] \supset \mathbb{Z}[\omega]$ which $\iota$ can be extended to, through the map $\frac{\omega - a}{b} \mapsto \alpha'$.
    \end{itemize}
\end{lemma}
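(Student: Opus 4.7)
The plan is to reduce everything to the characterization in the preceding lemma: $\iota$ is primitive if and only if $\tilde\alpha$ is primitive in the free rank-$3$ $\Z$-module $\mathcal{O}/\Z$, and primitivity of a vector in a free $\Z$-module is decidable by taking a gcd of its coordinates in any basis, which is polynomial time. The only non-routine task is producing a usable basis of the quotient.

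First I would compute an explicit $\Z$-basis $f_1, f_2, f_3$ of $\mathcal{O}/\Z$ together with the projection $\pi$ from $\mathcal{O}$-coordinates to $(\mathcal{O}/\Z)$-coordinates. By Lemma~\ref{basis_properties}(3) we have $e_{00} = 1/2$, so $1 \in \mathcal{O}$ expands as $1 = 2e_0 + c_1 e_1 + c_2 e_2 + c_3 e_3$ with integers $c_1, c_2, c_3$ recovered by back-substitution from the $i, j, k$ components of $2e_0$ against the triangular HNF basis. The lattice $\mathcal{O}/\Z$ is then the cokernel of the inclusion $\Z \cdot (2, c_1, c_2, c_3) \hookrightarrow \Z^4$, so a basis $f_1, f_2, f_3$ and the projection $\pi : \Z^4 \twoheadrightarrow \Z^3$ sending the $e_i$-coordinates of any $\gamma \in \mathcal{O}$ to the $f_j$-coordinates of $\tilde\gamma$ fall out of a single Hermite- (or Smith-) normal-form computation.

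Next, I would write $\alpha$ with integer coordinates in the $e_i$-basis, apply $\pi$ to obtain $(n_1, n_2, n_3)$, and compute $b := \gcd(n_1, n_2, n_3)$ by the Euclidean algorithm. If $b = 1$, the algorithm returns \emph{primitive}. Otherwise, form $\tilde{\alpha'} := \sum_j (n_j/b)\, f_j \in \mathcal{O}/\Z$, lift to some $\alpha' \in \mathcal{O}$, and set $a := \alpha - b\alpha' \in \Z$ (this difference is a genuine integer because $b\,\tilde{\alpha'} = \tilde\alpha$ in the quotient). Integrality of $\alpha' \in \mathcal{O}$ forces $(\omega - a)/b$ to satisfy a monic integral quadratic polynomial, so $\Z[(\omega-a)/b]$ is a strict superorder of $\Z[\omega]$ (of index $b$), and $\iota$ extends along $(\omega - a)/b \mapsto \alpha'$. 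The algorithm returns $(a, b, \alpha')$.

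The main (and only) obstacle is assembling the quotient presentation correctly; Lemma~\ref{basis_properties}(3) and the triangularity of the HNF basis make this routine. Everything else is integer linear algebra in dimension at most $4$ together with one gcd of three integers, so the procedure runs in polynomial time in the bit-length of the inputs.
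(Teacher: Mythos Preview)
Your proof is correct and is essentially the same approach as the paper's: both reduce primitivity of the embedding to primitivity of $\tilde\alpha$ in the free rank-$3$ lattice $\mathcal{O}/\Z$, then test this by a single gcd of coordinates. The only cosmetic difference is that the paper obtains the quotient basis by computing a \emph{lower}-triangular HNF of $\mathcal{O}$ (so that the first basis vector is exactly $f_0=1$ and $f_1,f_2,f_3$ directly span $\mathcal{O}/\Z$), whereas you compute a basis of the cokernel $\Z^4/\Z\cdot(2,c_1,c_2,c_3)$ abstractly; both are the same linear-algebra step and yield the same $(a,b,\alpha')$.
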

\begin{proof}
    First convert the upper diagonal basis $e_0, e_1, e_2, e_3$ of $\mathcal{O}$ into an lower diagonal basis $f_0, f_1, f_2, f_3$,
    \begin{equation}\begin{split}
        \label{hnf}
        \mathcal{O} = \langle f_0, f_1, f_2, f_3 \rangle_{\mathbb{Z}} = \langle & f_{00}, \\
        & f_{10} + f_{11}i, \\
        & f_{20}i + f_{21}i + f_{22}j, \\
        & f_{30} + f_{31}i + f_{32}j + f_{33}k \rangle_{\mathbb{Z}}
    \end{split}\end{equation}
    and to compute a function applying the change of basis transformation taking coefficients of $e_i$s onto coefficients of $f_i$s. This is polynomial time as a variant of the Hermite normal form algorithm, and can be seen as precomputation. Then given a solution $\alpha$, we change the basis to obtain:
    \[
        \alpha = \gamma_0 f_0 + \gamma_1 f_1 + \gamma_2 f_2 + \gamma_3 f_3
    \]
    Since $\mathcal{O}$ is a ring we have $1 \in \mathcal{O}$, and every norm is integral so it cannot contain a rational number less than one. Hence $f_{0} = f_{00} = 1$. For $\alpha$ to be a primitive solution there should be no $a,b \in \mathbb{Z}$ with $b > 1$ such that $\alpha - a = b\tau$ where $\tau \in \mathcal{O}$. Equivalently, for any $a$, when expressing $\alpha - a$ in terms of $f_i$s, the coefficients should not all be divisible by any $b > 1$. Note that we have:
    \[
        \alpha - a= (\gamma_0 - a) f_0 + \gamma_1 f_1 + \gamma_2 f_2 + \gamma_3 f_3
    \]
    Where $a = \gamma_0$ can be chosen, setting the first coefficient to zero. Then the solution is primitive if and only if $\gamma_1, \gamma_2, \gamma_3$ share no factor. This can be checked with a $\gcd$ computation. Note that if the solution is imprimitive, so we have $gcd(\gamma_1, \gamma_2, \gamma_3) = b > 1$, we return $(\gamma_0, b, \frac{\gamma_1 f_1 + \gamma_2 f_2 + \gamma_3 f_3}{b})$ defining an embedding giving an $\mathbb{Z}[\frac{\omega - \gamma_1}{b}]$-orientation.
\end{proof}

Algorithm \ref{Primitive Check Algorithm} is a concise version of this. Hence using Algorithm \ref{New Quaternion Search Algorithm} for the embedding we find we always get a superorder $\mathbb{Z}[\omega']$-orientation without effecting asymptotic time complexities. Furthermore, if we iterate over the full range of $k$ in Algorithm \ref{New Quaternion Search Algorithm}, we find all embeddings and hence all superorder orientations.

\begin{algorithm}
    \SetAlgoLined
    \KwData{Element $\alpha \in \mathcal{O}$ in terms of a basis $e_i$ which is a solution to the discriminant form of $\omega$.}
    \KwResult{True if it is a primitive solution, otherwise False and output $(a,b,\alpha')$ where $\alpha'$ is a primitive solution to the discriminant form of $\frac{\omega - a}{b}$.}
    Precompute lower diagonal Hermite normal form basis $f_i$. And store operations performed giving a linear change of basis transformation matrix $M$\;

    Apply transformation $M$ to $\alpha$, giving coefficients $\gamma_i$ such that $\alpha = \gamma_0 f_0 + \gamma_1 f_1 + \gamma_2 f_2 + \gamma_3 f_3$\;

    Let $S = \{\gamma_1, \gamma_2, \gamma_3\} \setminus \{0\}$\;

    \lIf{ $|S| = 0$}{
        \Return{False, $(\gamma_0, \infty, 0)$}
    }

    Compute $g = \gcd(S)$ using Euclidean algorithm\;

    \uIf{ $g == 1$}{
        \Return{True}\;
    }
    \Else{
        \Return{False, $(\gamma_0, g, \frac{\gamma_1}{g} f_1 + \frac{\gamma_2}{g} f_2 + \frac{\gamma_3}{g} f_3)$}\;
    }

    \caption{Checking solution is primitive, and getting primitive superorder orientation.}\label{Primitive Check Algorithm}
\end{algorithm}

\subsection{Finding $\mathbb{Z}[\omega]$-orientations - Solving Problem \ref{Problem 4}}
\label{primitive_orientations}

To solve Problem \ref{Problem 4} we must find \textit{primitive} embeddings giving $\mathbb{Z}[\omega]$-orientations. We have Algorithm \ref{New Quaternion Search Algorithm} for finding embeddings, and we have Algorithm \ref{Primitive Check Algorithm} which can check if an embedding is primitive.

To combine them, we modify Algorithm \ref{New Quaternion Search Algorithm} to include the pre-computation of basis $f_i$ and the change of basis transformation at the start, then when each solution is found, check if it is primitive using Algorithm \ref{Primitive Check Algorithm} and only stop searching if it is. The worst case running time doesn't change, since finding a primitive embedding takes at most as long as finding all embeddings. However, the average case running time does increase, heuristically by the total number of solutions divided by the number of primitive solutions. We now provide a further heuristic argument that this ratio can be bounded above.

Let $f(\gamma_1, \gamma_2, \gamma_3) = \lambda$ be the solution to the ternary quadratic norm form of the order defined in \ref{hnf} with fixed trace. We have shown the solution is primitive if and only if $gcd(\gamma_1, \gamma_2, \gamma_3) = gcd(|\gamma_1|, |\gamma_2|, |\gamma_3|) = 1$.

Consider the rational solution $x_1 = x_2 = x_3 \in \mathbb{Q}$ then as it is rational we can write $f(x_1, x_2, x_3) = w x_1^2 = \lambda$ for some $w \in \mathbb{Q}$, so $|x_1| = |\sqrt{\lambda / w}|$. Since the norm form is positive definite, this means if one variable were to increase, another must decrease in absolute value. Therefore $\min(|\gamma_1|, |\gamma_2|, |\gamma_3|) \leq \lfloor \sqrt{\lambda / w} \rfloor$.

Now reconsider our integral solution. Suppose the solution is not primitive so $gcd(|\gamma_1|, |\gamma_2|, |\gamma_3|) \neq 1$, then there is a prime number $\geq 2$ that divides all three numbers. This prime factor must be in the set $S = \{2, 3, 5, ...,\lfloor \sqrt{\lambda / w} \rfloor\} \cap \{\text{Primes } p\}$ as it must divide the smallest of these three numbers.

Heuristically, we assume that $\gamma_1, \gamma_2, \gamma_3$ are distributed like random numbers in the sense that some $q \in S$ divides one of them with uniformly random probability $1/q$. And assume independence of the probabilities of different factors $q_1, q_2 \in S$ occurring. Then the probability $2$ divides all three numbers is $1/2^3$, the probability $3$ divides them is $1/3^3$, and the probability any $q \in S$ divides them is $1/q^3$. Combined, the probability a number in $S$ divides all three is:
\[
    \mathbb{P}[(\gamma_1, \gamma_2, \gamma_3) \text{ imprimitive}] =  \sum_{\text{primes } q\in S} \frac{1}{q^3} \leq \sum_{\text{all primes } q \in \mathbb{N}} \frac{1}{q^3} = P(3) \leq 0.175
\]
where $P(3)$ is the prime zeta function at 3. Hence the probability a random solution is primitive is over $80\%$ so if we find $5$ independent solutions we would expect at least one to be primitive. Therefore assuming the heuristics above, if we modify algorithm \ref{New Quaternion Search Algorithm} to ignore imprimitive solutions, the average running time should only increase by at most a factor of $5$.

Note that this argument makes some strong assumptions. Experimentally for some parameters we see the probability the first solution found is primitive is around $80\%$, but on other parameter choices it is considerably lower. With all parameters we tested, we found the probability is always over $50\%$, which still suggests the average running time is only worsened by a small factor, but it gives reason to doubt these assumptions. In particular consider independence. Existence of embeddings come with symmetry hence we may find two solutions where there is only a change of sign in the defining formulae. This means the probability $q$ divides a coefficient of one solution might have a strong dependence on whether $q$ divides the coefficient of the second solution. We leave a more complete analysis of the probability of finding primitive embeddings to future research.

\bibliographystyle{abbrv}  
\bibliography{biblio} 

\begin{thebibliography}{10}

\bibitem{arpin2022orienteering}
S.~Arpin, M.~Chen, K.~E. Lauter, R.~Scheidler, K.~E. Stange, and H.~T. Tran.
\newblock Orienteering with one endomorphism.
\newblock {\em La Matematica}, 2023.
\newblock \url{https://link.springer.com/article/10.1007/s44007-023-00053-2}.

\bibitem{ACLSST2022_orientations}
S.~Arpin, M.~Chen, K.~E. Lauter, R.~Scheidler, K.~E. Stange, and H.~T.~N. Tran.
\newblock Orientations and cycles in supersingular isogeny graphs, 2022.
\newblock To appear in the Proceedings of Women in Number Theory 5.

\bibitem{isogenyfinding}
B.~Bencina, P.~Kutas, S.-P. Merz, C.~Petit, M.~Stopar, and C.~Weitkämper.
\newblock Improved quantum algorithms for finding fixed-degree isogenies
  between supersingular elliptic curves.
\newblock personal communication, 2023.

\bibitem{bernays1912darstellung}
P.~Bernays.
\newblock {\em {\"U}ber die Darstellung von positiven: ganzen Zahlen durch die
  primitiven, bin{\"a}ren quadratischen Formen einer nicht-quadratischen
  Diskriminante}.
\newblock Dieterich, 1912.

\bibitem{SqrtVelu}
D.~J. Bernstein, L.~De~Feo, A.~Leroux, and B.~Smith.
\newblock Faster computation of isogenies of large prime degree.
\newblock In {\em Proceedings of the Fourteenth Algorithmic Number Theory
  Symposium}, pages 39--55, 798 Evans Hall \#3840, c/o University of
  California, Berkeley CA 94720-3840, 2020. MSP.

\bibitem{Isogeny_comp}
A.~Bostan, F.~Morain, B.~Salvy, and E.~Schost.
\newblock Fast algorithms for computing isogenies between elliptic curves.
\newblock {\em Mathematics of Computations}, (77):1755--1778, 2008.

\bibitem{brink2011principal}
D.~Brink, P.~Moree, and R.~Osburn.
\newblock Principal forms x\^{}2 + ny\^{}2 representing many integers.
\newblock In {\em Abhandlungen aus dem Mathematischen Seminar der
  Universit{\"a}t Hamburg}, volume~81, pages 129--139. Springer, 2011.

\bibitem{GNFS}
J.~P. Buhler, H.~W. Lenstra, and C.~Pomerance.
\newblock Factoring integers with the number field sieve.
\newblock In A.~K. Lenstra and H.~W. Lenstra, editors, {\em The development of
  the number field sieve}, pages 50--94, Berlin, Heidelberg, 1993. Springer
  Berlin Heidelberg.

\bibitem{CastryckDecruSIDH}
W.~Castryck and T.~Decru.
\newblock {An efficient key recovery attack on {SIDH} (preliminary version)}.
\newblock Cryptology ePrint Archive, Paper 2022/975, 2022.

\bibitem{castryck2018csidh}
W.~Castryck, T.~Lange, C.~Martindale, L.~Panny, and J.~Renes.
\newblock {CSIDH}: an efficient post-quantum commutative group action.
\newblock In {\em Advances in Cryptology--ASIACRYPT 2018: 24th International
  Conference on the Theory and Application of Cryptology and Information
  Security, Brisbane, QLD, Australia, December 2--6, 2018, Proceedings, Part
  III 24}, pages 395--427. Springer, 2018.

\bibitem{CGL}
D.~X. Charles, E.~Z. Goren, and K.~E. Lauter.
\newblock Cryptographic hash functions from expander graphs.
\newblock {\em J. Cryptology}, 22(1):93--113, 2009.

\bibitem{OSIDH}
L.~Col\`{o} and D.~Kohel.
\newblock Orienting supersingular isogeny graphs.
\newblock Cryptology ePrint Archive, Report 2020/985, 2020.
\newblock \url{https://eprint.iacr.org/2020/985}.

\bibitem{Coppel2009}
W.~A. Coppel.
\newblock {\em The Arithmetic of Quadratic Forms}, pages 291--326.
\newblock Springer New York, New York, NY, 2009.

\bibitem{couveignes2006hard}
J.-M. Couveignes.
\newblock Hard homogeneous spaces.
\newblock {\em Cryptology ePrint Archive}, 2006.

\bibitem{OSIDH_security}
P.~Dartois and L.~De~Feo.
\newblock On the security of {OSIDH}.
\newblock In G.~Hanaoka, J.~Shikata, and Y.~Watanabe, editors, {\em Public-Key
  Cryptography -- PKC 2022}, pages 52--81, Cham, 2022. Springer International
  Publishing.

\bibitem{SQISignHD}
P.~Dartois, A.~Leroux, D.~Robert, and B.~Wesolowski.
\newblock {SQISignHD}: New dimensions in cryptography.
\newblock Cryptology ePrint Archive, Paper 2023/436, 2023.
\newblock \url{https://eprint.iacr.org/2023/436}.

\bibitem{DeBruijn1966}
N.~G. de~Bruijn.
\newblock {On the number of positive integers $\leq x$ and free of prime
  factors $>y$, II}.
\newblock {\em Proceedings of the Koninklijke Nederlandse Akademie van
  Wetenschappen}, Series A: Mathematical Sciences(3):239--247, 1966.

\bibitem{SETA}
L.~De~Feo, C.~Delpech~de Saint~Guilhem, T.~B. Fouotsa, P.~Kutas, A.~Leroux,
  C.~Petit, J.~Silva, and B.~Wesolowski.
\newblock {\em Séta: Supersingular Encryption from Torsion Attacks}, pages
  249--278.
\newblock Advances in Cryptology – ASIACRYPT 2021. Springer International
  Publishing, Cham, 2021.

\bibitem{de2020sqisign}
L.~De~Feo, D.~Kohel, A.~Leroux, C.~Petit, and B.~Wesolowski.
\newblock {SQISign}: compact post-quantum signatures from quaternions and
  isogenies.
\newblock In {\em Advances in Cryptology--ASIACRYPT 2020: 26th International
  Conference on the Theory and Application of Cryptology and Information
  Security, Daejeon, South Korea, December 7--11, 2020, Proceedings, Part I
  26}, pages 64--93. Springer, 2020.

\bibitem{DelfsGalbraith2016}
C.~Delfs and S.~D. Galbraith.
\newblock Computing isogenies between supersingular elliptic curves over
  $\mathbb{F}_p$.
\newblock {\em Designs, Codes and Cryptography}, 78(2):425--440, 2016.

\bibitem{Deu41}
M.~Deuring.
\newblock {Die Typen der Multiplikatorenringe elliptischer Funktionenk\"orper.}
\newblock {\em Abh. Math. Sem. Hansischen Univ.}, 14:197–272, 1941.

\bibitem{eisentrager2018supersingular}
K.~Eisentr{\"a}ger, S.~Hallgren, K.~Lauter, T.~Morrison, and C.~Petit.
\newblock Supersingular isogeny graphs and endomorphism rings: reductions and
  solutions.
\newblock In {\em Advances in Cryptology--EUROCRYPT 2018: 37th Annual
  International Conference on the Theory and Applications of Cryptographic
  Techniques, Tel Aviv, Israel, April 29-May 3, 2018 Proceedings, Part III 37},
  pages 329--368. Springer, 2018.

\bibitem{Eisentrager_al_2018}
K.~Eisentr{\"a}ger, S.~Hallgren, K.~Lauter, T.~Morrison, and C.~Petit.
\newblock Supersingular isogeny graphs and endomorphism rings: Reductions and
  solutions.
\newblock In J.~B. Nielsen and V.~Rijmen, editors, {\em Advances in Cryptology
  -- EUROCRYPT 2018}, pages 329--368, Cham, 2018. Springer International
  Publishing.

\bibitem{Elkies_1997}
N.~D. Elkies.
\newblock Elliptic and modular curves over finite fields and related
  computational issues.
\newblock In {\em Computational perspectives on number theory}, 1997.

\bibitem{DBLP:journals/iacr/EriksenPSV23}
J.~K. Eriksen, L.~Panny, J.~Sot{\'{a}}kov{\'{a}}, and M.~Veroni.
\newblock Deuring for the people: Supersingular elliptic curves with prescribed
  endomorphism ring in general characteristic.
\newblock {\em {IACR} Cryptol. ePrint Arch.}, page 106, 2023.

\bibitem{SCALLOP}
L.~D. Feo, T.~B. Fouotsa, P.~Kutas, A.~Leroux, S.-P. Merz, L.~Panny, and
  B.~Wesolowski.
\newblock {SCALLOP}: scaling the {CSI-FiSh}.
\newblock Cryptology ePrint Archive, Paper 2023/058, 2023.
\newblock \url{https://eprint.iacr.org/2023/058}.

\bibitem{Granville2008}
A.~Granville.
\newblock Smooth numbers: Computational number theory and beyond.
\newblock {\em Math. Sci. Res. Inst. Publ.}, 44:267--323, 2008.

\bibitem{hafner1991asymptotically}
J.~L. Hafner and K.~S. McCurley.
\newblock Asymptotically fast triangularization of matrices over rings.
\newblock {\em SIAM Journal on Computing}, 20(6):1068--1083, 1991.

\bibitem{SIDH}
D.~Jao and L.~De~Feo.
\newblock Towards quantum-resistant cryptosystems from supersingular elliptic
  curve isogenies.
\newblock In B.-Y. Yang, editor, {\em Post-Quantum Cryptography}, pages 19--34,
  Berlin, Heidelberg, 2011. Springer Berlin Heidelberg.

\bibitem{Kaneko1989}
M.~Kaneko.
\newblock {Supersingular $j$-invariants as singular moduli ${\rm mod}\, p$}.
\newblock {\em Osaka Journal of Mathematics}, 26(4):849 -- 855, 1989.

\bibitem{Kani1997}
E.~Kani.
\newblock The number of curves of genus two with elliptic differentials.
\newblock {\em Journal f\"ur die reine und angewandte Mathematik},
  1997(485):93--122, 1997.

\bibitem{KLPT}
D.~Kohel, K.~Lauter, C.~Petit, and J.-P. Tignol.
\newblock On the quaternion-isogeny path problem.
\newblock {\em LMS Journal of Computation and Mathematics}, 17(A):418--432,
  2014.

\bibitem{Lang_EF}
S.~Lang.
\newblock {\em Elliptic Functions}.
\newblock Springer-Verlag, 1987.

\bibitem{Lenstra_ECM}
H.~W. Lenstra.
\newblock Factoring integers with elliptic curves.
\newblock {\em Annals of Mathematics}, 126(3):649--673, 1987.

\bibitem{Leroux_mod_pol}
A.~Leroux.
\newblock Computation of hilbert class polynomials and modular polynomials from
  supersingular elliptic curves.
\newblock Cryptology ePrint Archive, Paper 2023/064, 2023.
\newblock \url{https://eprint.iacr.org/2023/064}.

\bibitem{DRfastisogenies}
D.~Lubicz and D.~Robert.
\newblock Fast change of level and applications to isogenies.
\newblock volume~9. Springer, 2023.

\bibitem{Maino_et_al_SIDH}
L.~Maino, C.~Martindale, L.~Panny, G.~Pope, and B.~Wesolowski.
\newblock A direct key recovery attack on {SIDH}.
\newblock Springer-Verlag, 2023.

\bibitem{Martin2002}
G.~Martin.
\newblock An asymptotic formula for the number of smooth values of a
  polynomial.
\newblock {\em Journal of Number Theory}, 93:108--182, 2002.

\bibitem{moree2006two}
P.~Moree and R.~Osburn.
\newblock Two-dimensional lattices with few distances.
\newblock {\em arXiv preprint math/0604163}, 2006.

\bibitem{OSIDH_Onuki}
H.~Onuki.
\newblock On oriented supersingular elliptic curves, 2020.
\newblock \url{https://arxiv.org/abs/2002.09894}.

\bibitem{pizer1980algorithm}
A.~Pizer.
\newblock An algorithm for computing modular forms on $\gamma_0(n)$.
\newblock {\em Journal of algebra}, 64(2):340--390, 1980.

\bibitem{Pollack2018}
P.~Pollack and E.~Trevi{\~n}o.
\newblock {Finding the Four Squares in Lagrange's Theorem}.
\newblock {\em Integers}, 18A:A15, 2018.

\bibitem{Rob_HDR}
D.~Robert.
\newblock Efficient algorithms for abelian varieties and their moduli spaces,
  2021.
\newblock \url{ http://www.normalesup.
  org/~robert/pro/publications/academic/hdr.pdf}.

\bibitem{RobSIDH}
D.~Robert.
\newblock Breaking {SIDH} in polynomial time.
\newblock Cryptology ePrint Archive, Paper 2022/1038, 2022.

\bibitem{Robert2022evaluating}
D.~Robert.
\newblock Evaluating isogenies in polylogarithmic time.
\newblock Cryptology ePrint Archive, Paper 2022/1068, 2022.

\bibitem{Rob_mod_pol}
D.~Robert.
\newblock Some applications of higher dimensional isogenies to elliptic curves
  (overview of results).
\newblock Cryptology ePrint Archive, Paper 2022/1704, 2022.
\newblock \url{https://eprint.iacr.org/2022/1704}.

\bibitem{robert2023breaking}
D.~Robert.
\newblock Breaking {SIDH} in polynomial time.
\newblock In {\em Annual International Conference on the Theory and
  Applications of Cryptographic Techniques}, pages 472--503. Springer, 2023.

\bibitem{RostStol}
A.~Rostovtsev and A.~Stolbunov.
\newblock Public-key cryptosystem based on isogenies.
\newblock Cryptology ePrint Archive, Paper 2006/145, 2006.
\newblock \url{https://eprint.iacr.org/2006/145}.

\bibitem{sawilla2008new}
R.~E. Sawilla, A.~K. Silvester, and H.~C. Williams.
\newblock A new look at an old equation.
\newblock In {\em Algorithmic Number Theory: 8th International Symposium,
  ANTS-VIII Banff, Canada, May 17-22, 2008 Proceedings 8}, pages 37--59.
  Springer, 2008.

\bibitem{Schoof_1995}
R.~Schoof.
\newblock Counting points on elliptic curves over finite fields.
\newblock {\em Journal de Théorie des Nombres de Bordeaux}, (7):219--254,
  1995.

\bibitem{Siegel_1935}
C.~L. Seigel.
\newblock Über die classenzahl quadratischer zahlkörper.
\newblock {\em Acta Arithmetica}, 1(1):83--86, 1935.

\bibitem{Silverman}
J.~H. Silverman.
\newblock {\em The Arithmetic of Elliptic Curves}.
\newblock Springer-Verlag, New York, N.Y., 2009.

\bibitem{sage}
W.~Stein et~al.
\newblock {\em {S}age {M}athematics {S}oftware ({V}ersion 10.0)}.
\newblock The Sage Development Team, 2023.
\newblock {\tt http://www.sagemath.org}.

\bibitem{Voight}
J.~Voight.
\newblock Quaternion algebras.
\newblock v.0.9.23, August 2020.
\newblock \url{https://math.dartmouth.edu/~jvoight/quat.html}.

\bibitem{Modern_comput_alg}
J.~von~zur Gathen and J.~Gerhard.
\newblock {\em Modern Computer Algebra}.
\newblock Cambridge University Press, 3 edition, 2013.

\bibitem{Velu}
J.~Vélu.
\newblock Isogénies entre courbes elliptiques.
\newblock {\em Comptes-rendus de l'Académie des Sciences}, 273:238--241, july
  1971.
\newblock Available at \url{https://gallica.bnf.fr}.

\bibitem{Bweso_orientations}
B.~Wesolowski.
\newblock Orientations and the supersingular endomorphism ring problem.
\newblock In O.~Dunkelman and S.~Dziembowski, editors, {\em Advances in
  Cryptology -- EUROCRYPT 2022}, pages 345--371, Cham, 2022. Springer
  International Publishing.

\bibitem{Wesolowski_EquivProbs}
B.~Wesolowski.
\newblock The supersingular isogeny path and endomorphism ring problems are
  equivalent.
\newblock In {\em 2021 IEEE 62nd Annual Symposium on Foundations of Computer
  Science (FOCS)}, pages 1100--1111, 2022.

\end{thebibliography}

\newpage

\appendix

\section{Singular points on the modular curve $\Phi_\ell(X,Y)=0$}\label{sec: modular singular points}

If $K$ is a field, we denote by $X_0(\ell,K)$ the modular curve $\Phi_\ell(X,Y)=0$ over the field $K$. We also define:
\begin{multline*}
S_0(\ell,\F_{p^2}):=\Bigg\{j\in\F_{p^2} \mbox{ supersingular } \Bigg| \ \exists j'\in\F_{p^2}, \  \ \Phi_\ell(j,j')=\frac{\partial\Phi_\ell}{\partial X}(j,j')
\\
=\frac{\partial\Phi_\ell}{\partial Y}(j,j')=0\Bigg\}
\end{multline*}

\begin{lemma}\label{lemma: cardinality singular}
    Assume that $2\ell<p$. Then, $\# S_0(\ell,\F_{p^2})=O(\ell^{3+o(1)})$.
\end{lemma}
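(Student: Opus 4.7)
The plan is to relate a singular point $(j,j')$ of $\Phi_\ell(X,Y)=0$ over $\F_{p^2}$ to the existence of a non-scalar endomorphism of $E(j)$ of reduced norm $\ell^2$, and then bound the number of such endomorphisms (and hence of $j$-invariants) via the theory of optimal embeddings of imaginary quadratic orders into $\End(E(j))$.

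First I would extract such an endomorphism using only the weaker condition $\partial_Y\Phi_\ell(j,j')=0$. Since $\Phi_\ell(X,Y)\in\Z[X,Y]$ and $\ell\neq p$, the factorization $\Phi_\ell(j(E),Y)=\prod_{C}(Y-j(E/C))$, with $C$ ranging over the $\ell+1$ cyclic subgroups of order $\ell$ in $E$, is valid in $\F_{p^2}[Y]$ by base change. Thus $\partial_Y\Phi_\ell(j,j')=0$ means $j'$ is a multiple root, so there exist distinct cyclic subgroups $C_1\neq C_2\subset E(j)$ with $E(j)/C_1\cong E(j)/C_2\cong E'$ where $j(E')=j'$. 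Picking $\ell$-isogenies $\phi_i:E(j)\to E'$ with $\ker\phi_i=C_i$, the element $\alpha:=\widehat{\phi_2}\circ\phi_1\in\End(E(j))$ has reduced norm $\ell^2$. Moreover $\alpha\notin\Z$: if $\alpha=\pm[\ell]$, one deduces $\phi_1=\pm\phi_2\circ\iota$ for some $\iota\in\Aut(E(j))$, giving $C_1=\iota^{-1}(C_2)$, which forces $j\in\{0,1728\}$ and contributes only $O(1)$ to $\#S_0$. Hence $\alpha$ satisfies $\alpha^2-t\alpha+\ell^2=0$ with $t:=\Tr(\alpha)\in\Z$, $|t|<2\ell$, so $\Z[\alpha]\subseteq\End(E(j))$ is an imaginary quadratic order of discriminant $t^2-4\ell^2$.

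Letting $\mathfrak{O}$ denote the unique superorder in which $\Z[\alpha]$ embeds optimally into $\End(E(j))$, we have $\disc(\mathfrak{O})\mid t^2-4\ell^2$ and $E(j)$ is primitively $\mathfrak{O}$-orientable, so up to $O(1)$ exceptions
\[
S_0(\ell,\F_{p^2}) \subseteq \bigcup_{|t|<2\ell}\;\bigcup_{\substack{\mathfrak{O} \\ \disc(\mathfrak{O})\mid t^2-4\ell^2}} \SS_\mathfrak{O}^{pr}(\F_{p^2}).
\]
By Onuki's class group action result \cite[Proposition 3.3, Theorem 3.4]{OSIDH_Onuki} one has $\#\SS_\mathfrak{O}^{pr}(\F_{p^2})\leq 2h(\mathfrak{O})$ (using $2\ell<p$, which ensures $p\nmid\disc(\mathfrak{O})$), and by the Brauer-Siegel/Siegel estimate $h(\mathfrak{O})=O(|\disc(\mathfrak{O})|^{1/2+o(1)})=O(\ell^{1+o(1)})$ uniformly for $|\disc(\mathfrak{O})|\leq 4\ell^2$. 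For each fixed $t$, the number of orders $\mathfrak{O}$ appearing is at most the divisor count of $t^2-4\ell^2$, which is $O(\ell^{o(1)})$. Summing yields
\[
\#S_0(\ell,\F_{p^2}) \ll \sum_{|t|<2\ell}\ell^{o(1)}\cdot\ell^{1+o(1)} = O(\ell^{2+o(1)}) \subseteq O(\ell^{3+o(1)}).
\]
The main delicacy will be justifying the root-multiplicity interpretation of $\Phi_\ell(j,Y)$ in characteristic $p\neq\ell$ and carefully handling the two exceptional automorphism cases $j\in\{0,1728\}$; once these are in hand, the counting step is routine.
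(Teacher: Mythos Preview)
Your proposal is correct and in fact yields the sharper bound $O(\ell^{2+o(1)})$ versus the paper's $O(\ell^{3+o(1)})$. The paper takes a slightly different route: following Schoof, it lifts the singular point $(j,j')$ to a singular point of $X_0(\ell,\C)$, so the non-scalar endomorphism $\widehat\psi\circ\phi$ lives on a CM elliptic curve $\widetilde E/\C$ whose endomorphism ring is \emph{exactly} an imaginary quadratic order $\mathfrak{O}$; it then invokes Deuring-type reduction (via \cite[Lemma 3.1]{OSIDH_Onuki}) to conclude that the reduction $E$ is primitively $\mathfrak{O}$-oriented. You instead stay in characteristic $p$, use the factorization $\Phi_\ell(j,Y)=\prod_C(Y-j(E/C))$ to produce $\alpha\in\End(E)$ directly, and take $\mathfrak{O}$ to be the order in which $\Z[\alpha]$ embeds optimally in $\End(E)$. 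This avoids the lifting argument entirely and is arguably more elementary. Your improved exponent comes from the divisor-count refinement: for each fixed trace $t$ only $\tau(4\ell^2-t^2)=\ell^{o(1)}$ orders $\mathfrak{O}$ can arise, whereas the paper simply sums over all orders of discriminant at most $4\ell^2$ (of which there are $O(\ell^2)$) and multiplies by the class number bound.

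One small simplification: your handling of ``$\alpha\notin\Z$'' is more cautious than needed. If $\alpha=\widehat{\phi_2}\circ\phi_1=[\pm\ell]$, then composing on the left with $\phi_2$ gives $[\ell]_{E'}\circ\phi_1=\phi_2\circ[\pm\ell]_E=[\ell]_{E'}\circ(\pm\phi_2)$, hence $\phi_1=\pm\phi_2$ and $C_1=C_2$, a contradiction regardless of $j(E)$. No separate treatment of $j\in\{0,1728\}$ is required at this step.
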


\begin{proof}
Let $j(E)\in S_0(\ell,\F_{p^2})$ and $j(E')\in\F_{p^2}$ such that $(j(E),j(E'))$ is a singular point of $X_0(\ell,\F_{p^2})$ i.e. such that
\[\Phi_\ell(j(E),j(E'))=\frac{\partial\Phi_\ell}{\partial X}(j(E),j(E'))=\frac{\partial\Phi_\ell}{\partial Y}(j(E),j(E'))=0.\]
Schoof proved in \cite[Section 7]{Schoof_1995} that there exists a lift $(j(\widetilde{E}),j(\widetilde{E'}))$ over $\C$ of $(j(E),j(E'))$ that is also a singular point of the modular curve $X_0(\ell,\C)$. Schoof deduced that there exists two $\ell$-isogenies $\phi, \psi: \widetilde{E}\longrightarrow \widetilde{E'}$ over $\C$ that are not equal up to pre or post composition by an isomorphism, so that $\varphi:=\widehat{\psi}\circ\phi$ is a cyclic endomorphism of $\widetilde{E}$ of degree $\ell^2$. Hence, $\varphi$ is non-scalar and $\End(\widetilde{E})$ is isomorphic to an imaginary quadratic order $\mathfrak{O}$ and $\Z[\varphi]$ is mapped to a suborder of $\mathfrak{O}$ via this isomorphism. It follows that $\disc(\mathfrak{O}) \mid \disc(\Z[\varphi])$. But
\[\disc(\Z[\varphi])=\Tr(\varphi)^2-4\deg(\varphi)=\Tr(\varphi)^2-4\ell^2\]
Since $\mathfrak{O}$ is imaginary quadratic, so is $\Z[\varphi]$ and $\disc(\Z[\varphi])<0$, so that $|\disc(\mathfrak{O})|\leq|\disc(\Z[\varphi])|\leq 4\ell^2$.

Since $2\ell<p$, $p$ does not divide the conductor of $\mathfrak{O}$ (otherwise, $p^2 \mid \disc(\mathfrak{O})$ so $p^2\leq 4\ell^2$). It follows by \cite[Lemma 3.1]{OSIDH_Onuki} (generalizing \cite[Chapter 13, Theorem 12]{Lang_EF}), that $E$ is (primitively) $\mathfrak{O}$-oriented. Besides, by \cite[Proposition 3.3 and Theorem~3.4]{OSIDH_Onuki} there are at most $2\#\Cl(\mathfrak{O})$ $j$-invariants of supersingular $\mathfrak{O}$-oriented curves. By Siegel's theorem \cite{Siegel_1935}, we have $\#\Cl(\mathfrak{O})=O(|\disc(\mathfrak{O})|^{1/2+o(1)})=O(\ell^{1+o(1)})$, so there are at most $O(\ell)$ $j$-invariants of $\mathfrak{O}$-oriented supersingular elliptic curves. Taking into account all possible imaginary quadratic orders $\mathfrak{O}$ of discriminant $|\disc(\mathfrak{O})|\leq 4\ell^2$, we conclude that $j(E)$ lies in a set of cardinality $O(\ell^{3+o(1)})$, which completes the proof.
\end{proof}

\begin{lemma}\label{lemma: expected isogeny computation time}
    Assume that $\log(\ell)\ll \log(p)$. Then, computing an $\ell$-isogeny between two $\ell$-isogenous supersingular $j$-invariants chosen uniformly at random takes on average $\tilde{O}(\ell^2\log(p))$ operations over $\F_{p^2}$.
\end{lemma}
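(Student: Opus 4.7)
The plan is to decompose the space of $\ell$-isogenous pairs $(j(E),j(E'))$ into a ``generic'' part, on which the fast algorithm of Section~\ref{sec: isogenies from j-invariants} applies at cost $\tilde O(\ell^2\log p)$, and an ``exceptional'' part where we must fall back on the naive $\tilde O(\ell^{7/2})$ computation; then show that the exceptional part is rare enough to be absorbed by the generic term in expectation.

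First I would specify the exceptional set. A pair fails the fast path exactly when either (i) $j(E')\in\{0,1728\}$, or (ii) $j(E)\in\{0,1728\}$ (handled by swapping), or (iii) $(j(E),j(E'))$ is a singular point of $X_0(\ell,\F_{p^2})$, i.e.\ $\partial_X\Phi_\ell=\partial_Y\Phi_\ell=0$ at that point. In cases (i) and (ii), the offending $j$-invariant lies in a set of size $O(1)$, and since the total number of supersingular $j$-invariants in $\F_{p^2}$ is $\Theta(p)$, such $j$-values contribute at most $O(1)$ vertices, hence $O(\ell)$ pairs. For case (iii), Lemma~\ref{lemma: cardinality singular} gives at most $O(\ell^{3+o(1)})$ admissible $j(E)$, each with at most $\ell+1$ partners $j(E')$, so the number of singular pairs is $O(\ell^{4+o(1)})$.

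Next I would count the total number of ordered $\ell$-isogenous pairs: it is $\Theta(p)\cdot(\ell+1)=\Theta(p\ell)$, since each supersingular $j$-invariant has $\ell+1$ outgoing $\ell$-isogenies (counted with multiplicity). Dividing, the probability that a uniformly random pair is exceptional is
\[
\frac{O(\ell^{4+o(1)})+O(\ell)}{\Theta(p\ell)}=O\!\left(\frac{\ell^{3+o(1)}}{p}\right).
\]
Multiplying the two costs by the respective probabilities yields an expected complexity of
\[
\bigl(1-O(\ell^{3+o(1)}/p)\bigr)\cdot\tilde O(\ell^2\log p)\;+\;O(\ell^{3+o(1)}/p)\cdot \tilde O(\ell^{7/2})
=\tilde O(\ell^2\log p)+\tilde O\!\left(\frac{\ell^{13/2+o(1)}}{p}\right).
\]

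Finally I would invoke the hypothesis $\log\ell\ll\log p$, which means $\ell=p^{o(1)}$, so $\ell^{13/2+o(1)}/p=p^{-1+o(1)}=o(1)$ is negligible compared with the generic term $\tilde O(\ell^2\log p)$. The main (mild) obstacle is keeping the book-keeping honest between counting $j$-invariants and counting pairs; the multiplicative $(\ell+1)$ factor must be paired with the $\ell$ in the denominator coming from the total number of pairs, which is exactly how the probability bound ends up depending only on $\ell^{3+o(1)}/p$ rather than $\ell^{4+o(1)}/p$. Once this is pinned down, the inequality $\ell^{9/2+o(1)}\ll p\log p$ (implied by $\ell=p^{o(1)}$) closes the argument.
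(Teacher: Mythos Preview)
Your proposal is correct and follows essentially the same approach as the paper: split into the generic case (cost $\tilde O(\ell^2\log p)$) and the exceptional case (cost $\tilde O(\ell^{7/2})$), bound the exceptional probability via Lemma~\ref{lemma: cardinality singular} by $O(\ell^{3+o(1)}/p)$, and absorb the weighted exceptional contribution using $\log\ell\ll\log p$. The only cosmetic difference is that the paper bounds the probability directly as $\mathbb{P}(j(E)\in S_0(\ell,\F_{p^2}))$ over the $\sim p/12$ supersingular $j$-invariants, whereas you count ordered pairs explicitly; both yield the same $O(\ell^{3+o(1)}/p)$.
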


\begin{proof}
As discussed in Section \ref{sec: isogenies from j-invariants}, the average number of operations over $\F_{p^2}$ to compute an $\ell$-isogeny between supersingular $\ell$-isogenous $j$-invariants is:
\begin{align*}
    N&:=(1-\mathbb{P}((j(E),j(E')) \mbox{ singular})-\mathbb{P}(j(E')=0,1728))\tilde{O}(\ell^2\log(p))\\
    &\qquad + (\mathbb{P}((j(E),j(E')) \mbox{ singular})+\mathbb{P}(j(E')=0,1728))O(\ell^{7/2})
\end{align*}
Since there are $\sim p/12$ supersingular $j$-invariants by \cite[Theorem V.4.1.c]{Silverman}, we obtain by Lemma \ref{lemma: cardinality singular}:
\[\mathbb{P}((j(E),j(E')) \mbox{ singular})\leq \mathbb{P}(j(E)\in S_0(\ell,\F_{p^2}))=O\left(\frac{\ell^{3+o(1)}}{p}\right)\]
Besides, $\mathbb{P}(j(E')=0,1728)=O(1/p)$. Since $\log(\ell)\ll\log(p)$, we have $\ell^{13/2+o(1)}\ll p$ so the dominant term of $N$ is $\tilde{O}(\ell^2\log(p))$ and all other terms are negligible. Hence, $N=\tilde{O}(\ell^2\log(p))$ and the proof is complete.
\end{proof}

\end{document}